\def\classification#1{\def\@class{#1}}
\DeclareFontFamily{OT1}{rsfs}{}
\DeclareFontShape{OT1}{rsfs}{n}{it}{<-> rsfs10}{}
\DeclareMathAlphabet{\mathscr}{OT1}{rsfs}{n}{it}
\DeclareMathOperator{\mo}{\,mod}
\DeclareMathOperator{\GL}{GL}
\DeclareMathOperator{\SO}{SO}
\DeclareMathOperator{\diam}{diam}
\DeclareMathOperator{\SL}{SL}
\DeclareMathOperator{\PSL}{PSL}
\DeclareMathOperator{\PSp}{PSp}
\DeclareMathOperator{\Sym}{Sym}
\DeclareMathOperator{\Ad}{Ad}
\DeclareMathOperator{\Stab}{Stab}
\DeclareMathOperator{\Alt}{Alt}
\DeclareMathOperator{\charac}{char}
\DeclareMathOperator{\rank}{rank}
\DeclareMathOperator{\tr}{tr}
\DeclareMathOperator{\Cl}{Cl}
\DeclareMathOperator{\BS}{BS}
\DeclareMathOperator{\un}{un}
\newtheorem{theorem}{Theorem}[section]
\newtheorem{lem}[theorem]{Lemma}
\newtheorem{prop}[theorem]{Proposition}
\newtheorem{prob}[theorem]{Exercise}
\newtheorem{coun}{Counterexample}
\newtheorem{conj}{Conjecture}
\newtheorem{defn}{Definition}[section]
\newtheorem{corollary}[theorem]{Corollary}
\numberwithin{equation}{section}
\title{Growth and expansion in algebraic groups over finite fields}
\author{Harald Andr\'es Helfgott}
\address{Harald A. Helfgott, 
  Mathematisches Institut,
Georg-August Universit\"{a}t G\"{o}ttingen, Bunsenstra{\ss}e 3-5, D-37073 G\"{o}ttingen,
Germany; IMJ-PRG, UMR 7586,
  58 avenue de France, B\^{a}timent S. Germain, case 7012,
  75013 Paris CEDEX 13, France}
\email{harald.helfgott@gmail.com}
\begin{document}
\maketitle
\tableofcontents

\section{Introduction}

This text is meant to serve as a brief introduction to the study
of growth in groups of Lie type, with $\SL_2(\mathbb{F}_q)$ and some of
its subgroups as the key examples.
 They are an edited version of the notes I distributed at the
 Arizona Winter School in 2016. Those notes were, in turn,  
 based in part on the survey \cite{MR3348442} and in part
 on the notes for courses I gave on the subject in Cusco \cite{AGRA2}
 and G\"ottingen. 

Given the format of the Arizona Winter School, the emphasis here is on reaching
the frontiers of current research as soon as possible, and not so much
on giving a comprehensive overview of the field. For that the reader is
referred to \cite{MR3348442} and its bibliography, or
to \cite{MR3144176} and \cite{MR3309986}. At the same time -- again motivated
by the school's demands -- we will take a brief look at several
applications at the end.

It will be necessary to be minimally conversant with some of the basic
classical vocabulary of algebraic geometry (as in the first
chapter of Mumford's Red Book \cite{MR1748380}), and with some 
notions on algebraic groups (such as $\SL_2$) and
Lie algebras (such as $\mathfrak{sl}_2$). A very brief compendium of what will
be needed can be found in \S \ref{subs:prelim}. It is often helpful
(and only rarely misleading) to be willing to believe that matters
work out in much the same way over finite field as they do over the reals.


The purpose of these notes is expository, not historical, though I have
tried to give key references. The origins of several ideas are traced in
greater detail in \cite{MR3348442}. In \S \ref{subs:overv}, we will give
a summary of the results we later prove and also of results and open questions
of the same kind. We will go over some important 
related questions and applications later, in \S \ref{sec:further}.

{\em Acknowledgements.} I was supported by ERC Consolidator grant
648329 (codename GRANT) and by funds from my Humboldt professorship.
Many thanks are due to a helpful and spirited anonymous referee.
Thanks are due as well to Lifan Guan, for providing a useful
reference and catching several typos, and to the audiences both
at the Arizona Winter School and at the Hausdorff Institute (HIM), for
real-time feedback.

\subsection{Basic questions and concepts: diameter, growth, expansion}\label{subs:whatisg}

Let $A$ be a finite subset of a group $G$. Consider the sets
\[\begin{aligned}
A&,\\
A\cdot A &= \{x\cdot y : x,y\in A\},\\
A\cdot A\cdot A &= \{x\cdot y\cdot z: x,y,z\in A\},\\
&\dotsc\\
A^k &= \{x_1 x_2 \dotsc x_k : x_i\in A\}.
\end{aligned}\]
Write $|S|$ for the {\em size} of a finite set $S$, meaning simply the number
of elements of $S$. A question arises naturally: how does $|A^k|$ grow as $k$
grows?

This kind of question has been studied from the perspective of additive 
combinatorics (for $G$ abelian) and geometric group theory ($G$ infinite,
$k\to \infty$). There are also some crucial related concepts coming
from other fields: {\em diameters} and {\em expanders}, to start with.

{\em Diameters.} Let $A$ be a set of generators of $G$.
When $G$ is infinite, a central question is how $|A^k|$
behaves as a function of $k$ as $k\to \infty$. When $G$ is finite, that
question does not make much sense, as $|A^k|$ obviously stays constant
as soon as $A^k = G$. Instead, let us ask ourselves what is the least
value of $k$ such that $A^k = G$. This value of $k$ is called the
{\em diameter}. It is finite because, for $A$ generating $G$,
$A^j\ne G$ implies $|A^{j+1}|>|A^j|$.
(Why is this last statement true?)

The term {\em diameter} comes from geometry. What we have is not just an analogy
-- we can actually put our basic terms in a geometrical framework, as
geometric group theory does. A {\em Cayley graph}  $\Gamma(G,A)$ is the graph
having $V=G$ as its set of vertices and $E=\{(g,ag): g\in G, a\in A\}$ as its set of edges.
Define the length of a path in the graph as the number of edges
in it, and the distance $d(v,w)$ between two vertices $v$, $w$
in the graph as the length
of the shortest path between them. The {\em diameter} of a graph 
is the maximum of the distance
$d(v,w)$ over all vertices $v$, $w$. It is easy to see
that the diameter of $G$ with respect to $A$, as we defined it above, equals
the diameter of the graph $\Gamma(G,A)$.


{\em Product theorems.}
A central question of additive combinatorics is as follows: for finite
subsets 
$A$ of an abelian group $(G,+)$, when exactly is
it that $A+A$ is much larger than $A$?
In non-abelian groups $(G,\cdot)$, the right form of the question turns out to
be: given a set of generators $A$ of $G$, when is $A^3$
much larger than $A$? (We will see later why it is better to ask
about $A^3=A\cdot A\cdot A$ rather than $A^2 = A\cdot A$ here.)

It is clear that, if we show that, for any generating set $A$ of $G$,
\begin{equation}\label{eq:dichot}
  \text{either}\;\;\;\;\; \text{$|A^3|$ is much larger than $A$}\;\;\;\;\;\;
  \text{or}\;\;\;\; A^3 = G,
\end{equation}
then $A^k$ grows rapidly until roughly the point where $A^k = G$: simply
apply (\ref{eq:dichot}) to $A$, $A^3$, $A^9$, etc., in place of $A$.
In particular, (\ref{eq:dichot}) yields an upper bound on the
diameter of $G$ with respect to $A$. We call a result of the form
(\ref{eq:dichot}) a {\em product theorem}.

{\em Expansion.}
We say that a graph is an {\em vertex expander} with parameter $\delta>0$
(or {\em $\delta$-vertex expander})
if, for every subset $S$ of the set of vertices $V$ satisfying (say)
$|S|\leq |V|/2$, the number of vertices $v\in V$ not in $S$ such that
at least one edge connects $v$ to some element of $S$ is at least $\delta |S|$.
(We may think of $S$ as being a set of infected individuals; then we are
saying that the number of the newly infected will always
be at least $\delta |S|$,
unless the disease has reached a near-saturation point.)

Two closely connected notions are that of {\em edge expansion} and
{\em spectral expansion}.
First, some basic terms. A graph is {\em regular} if, for any vertex $v$,
the number of vertices $w$ such that $(v,w)$ is an edge equals a
constant $d$, and the number of vertices $w$ such that $(w,v)$ is an
edge also equals a constant (which must also be $d$, by a simple counting
argument). We call $d$ the {\em degree} or {\em valency} of the graph.
A Cayley graph $\Gamma(G,A)$ is always regular of degree $d=|A|$.

A regular graph $\Gamma=(V,E)$
of degree $d$ is a $\delta$-{\em edge expander} if, for every
$S\subset V$ satisfying $|S|\leq |V|/2$, the number of edges having one vertex
in $S$ and one outside $S$ is at least $\delta d |S|$. It is clear that,
if $\Gamma$ is a $\delta$-vertex expander, then it is a $(\delta/d)$-edge expander, and, if it is a $\delta$-edge expander, then it is a $\delta$-vertex
expander.

We say that a graph $\Gamma$ is {\em symmetric} to mean that
$(v,w)$ is an edge if and only if $(w,v)$ is an edge.
If $\Gamma$ is a Cayley graph $\Gamma(G,A)$, then $\Gamma$ is symmetric
provided that  $A^{-1} = \{g^{-1} : g\in A\}$ equals $A$.
We will generally assume that $A^{-1}=A$ without much loss of generality.
(Replace $A$ by $A\cup A^{-1}$ otherwise.)

Given a regular graph $\Gamma$ with a set of vertices $V$, the
{\em adjacency operator} $\mathscr{A}$ is the linear operator
taking any given function $f:V\to \mathbb{C}$ to the function
$\mathscr{A}f:V\to \mathbb{C}$ defined by
\begin{equation}\label{eq:adjac}
  \mathscr{A} f(v) = \frac{1}{d} \sum_{w: \text{$(v,w)$ is an edge}} f(w).
\end{equation}
Assume that the graph $\Gamma$ is symmetric.
Then $\mathscr{A}$ is a symmetric operator,
and thus has full real spectrum. Its largest eigenvalue is $1$;
it corresponds to constant eigenfunctions.
If every eigenvalue $\lambda$ of $\mathscr{A}$ corresponding to
non-constant eigenfunctions satisfies
$\lambda\leq 1- \delta$ for some $\delta>0$, we say that $\Gamma$ is
a $\delta$-spectral expander, or a {\em $\delta$-expander} for short.

If a regular, symmetric graph 
is a $\delta$-spectral expander, then it is
a $(\delta/2)$-edge expander, and, if it is a $\delta$-edge expander, then it is a $(\delta^2/2)$-spectral expander. This fact is non-trivial; it
is called the {\em Cheeger-Alon-Milman} inequality \cite{MR782626}, by analogy
with the {\em Cheeger inequality} on manifolds
\cite{MR0402831}.


The notion of spectral expansion is natural, not just because of the analogy
with surfaces and their Laplacians, but, among other reasons,
because of random walks: a drunken
mathematician left to wander in a spectral expander $\Gamma$ will be anywhere
with about the same probability after only a short while.
To put matters more formally --
as we shall see in \S \ref{sec:exprawsie}, spectral expansion
implies small {\em mixing time}.

Since the diameter of a graph is bounded by its ($\ell_\infty$-)mixing time,
it follows immediately that spectral expansion implies small diameter.
We can also prove this implication
going through edge and vertex expansion: if a graph is a $\delta$-vertex
expander, it is very
easy to see that its diameter is $\ll (\log |G|)/\delta$; apply, then,
the Cheeger-Alon-Milman inequality.

\subsection{A brief overview of results on growth and diameter}\label{subs:overv}

Let us first review some basic terms from group theory.
A group $G$ is {\em simple} if it has no normal subgroups other than itself
and the identity.
A {\em subnormal series} of a group $G$ is a sequence of subgroups
\begin{equation}\label{eq:turu}\{e\} = H_0 \triangleleft H_1 \triangleleft H_2 \
\triangleleft \dotsb 
\triangleleft H_k = G,\end{equation}
i.e., $H_i$ is normal in $H_{i+1}$ for every $0\leq i<k$.
A {\em decomposition series} is a subnormal series in which every quotient
$H_{i+1}/H_i$ is simple. It is clear that every finite group has a decomposition
series. 

In some limited sense, questions on growth behave well under taking quotients,
and thus reduce to the case of simple groups, at least if our
decomposition series of bounded length. (To be precise:
for how product theorems behave under taking quotients, see exercises
\ref{prob:bara} and \ref{prob:bere}). For the behavior of diameters under
quotients, look up {\em Schreier generators}.) It thus makes sense to focus
on simple groups.

\subsubsection{Simple groups: what to expect?}

Some special cases of the following conjecture are arguably older ``folklore''.
\begin{conj}\label{babaiconj}
  $($Babai, \cite[Conj. 1.7]{BS92}$)$
  Let $G$ be finite, simple and non-abelian. Let $A$ be any set of generators
  of $G$. Then
\[
\diam(\Gamma(G,A)) \ll (\log |G|)^C,
\]
where $C$ and the implied constant are absolute constants.
\end{conj}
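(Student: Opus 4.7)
The first honest remark is that Conjecture \ref{babaiconj} is famously open in full generality; what follows is the strategy that has succeeded for several families, together with the point where it currently breaks down. By the reduction alluded to in the text (decomposition series plus Schreier generators), it suffices to treat $G$ finite simple non-abelian, and then by the classification of finite simple groups we may assume $G$ is either of Lie type or an alternating group $A_n$ (sporadic groups give only finitely many exceptions, harmless for an asymptotic statement).

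For $G = G(\F_q)$ of \emph{bounded} Lie rank $r$, the plan is to prove a product theorem of the dichotomy form (\ref{eq:dichot}): for every generating set $A \subseteq G$, either $A^3 = G$ or $|A^3| \geq |A|^{1+\delta}$ with $\delta = \delta(r) > 0$. As the excerpt observes, iterating this on $A, A^3, A^9, \ldots$ forces $\diam(\Gamma(G,A)) \ll_r (\log |G|)^{O(1)}$, which is exactly Babai for this family. The product theorem itself I would assemble from three ingredients: (i) \emph{escape from subvarieties}, giving that a short product $A^k$ with $k = O(1)$ must contain a regular semisimple element $g$, since the non-regular-semisimple locus is a proper subvariety and $A$ generates; (ii) a \emph{pivoting/sum-product} step, using conjugates of $g$ by elements of $A$ to convert growth in the maximal torus containing $g$ (and in unipotent root subgroups) into growth in $G$; and (iii) \emph{Larsen--Pink dimensional estimates}, which bound $|A^k \cap H| \leq |A^k|^{1 - c(r)}$ for every proper closed subgroup $H$ and so rule out $A^3$ hiding inside a proper subgroup.

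The alternating case $G = A_n$ does not fit that template. Here I would follow a combinatorial route in the spirit of Helfgott--Seress: exploit the natural action on $\{1, \dots, n\}$ to show that some short word in $A^k$ has small support (ultimately a $3$-cycle), then sweep out $A_n$ by conjugation. This currently yields only $\exp(O((\log n)^4))$, which is quasipolynomial rather than polylogarithmic, so even here the conjecture is not fully settled.

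The serious obstacle, and the reason Babai remains a conjecture, is \emph{Lie type of unbounded rank}, e.g.\ $\PSL_n(\F_p)$ with $p$ fixed and $n \to \infty$. All three ingredients above degrade in the rank: Larsen--Pink constants and the codimension estimates needed for escape-from-subvariety blow up with $\dim G$, so the product-theorem exponent $\delta(r)$ shrinks out of usable range and no polylogarithmic diameter bound is known. I would expect to get stuck precisely here; removing the rank dependence plausibly requires exploiting the linear action on $\F_p^n$ directly, rather than treating $G$ purely as an algebraic variety, and no such mechanism is presently available.
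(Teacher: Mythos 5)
You have correctly recognized that this statement is presented in the paper as a conjecture (Babai's conjecture), not as a theorem: the paper contains no proof of it, and none is known. Your refusal to manufacture one is the right call, and your survey of the partial progress is accurate and matches the paper's own discussion. For bounded Lie rank, the dichotomy (\ref{eq:dichot}) -- proved in the paper for $\SL_2/\PSL_2$ as Theorem \ref{thm:main}, with the diameter consequence extracted exactly as you describe in Exercise \ref{prob:agant}, and known for all bounded-rank groups of Lie type by Breuillard--Green--Tao and Pyber--Szab\'o -- yields the polylogarithmic bound with $C$ depending on the rank; the three ingredients you list (escape from subvarieties, pivoting on a regular semisimple element and its maximal torus, Larsen--Pink-type dimensional estimates) are precisely the machinery the paper develops in \S\ref{chap:inter} and \S\ref{sec:growthth}. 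For $\Alt(n)$, the paper's Theorem \ref{thm:hs} gives the quasipolynomial bound $\exp\left(O\left((\log n)^4 \log\log n\right)\right)$ (you dropped the $\log\log n$ factor, a harmless slip), and the Pyber--Spiga counterexample recorded in the paper confirms your point that a clean product theorem fails in $\Sym(n)$, so the Lie-type template cannot simply be transplanted. Your identification of unbounded rank as the genuine obstruction is likewise consistent with the paper, which cites Pyber's examples showing that the exponent $\delta$ in the product theorem must degrade with the rank. In short, there is no gap to point out, because you did not claim more than is known; within the scope of what the paper actually establishes (the $\PSL_2$ case and the cited generalizations), your outline coincides with the paper's approach.
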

(See \S \ref{subs:monmouth} for definitions of asymptotic notation.)

What about finite, simple, {\em abelian} groups $G$? They are the groups
$G=\mathbb{Z}/p\mathbb{Z}$. In that case, diameters can be very large:
for instance, $\diam \Gamma(\mathbb{Z}/p\mathbb{Z},\{1\}) = p-1$.
In general, when $G$ is abelian, the question
of which subsets $A\subset \mathbb{Z}/p\mathbb{Z}$ satisfy $|A+A|> K |A|$
for given $K$ is classical, and difficult; for $K$ a constant, it is answered
by a suitable generalization of Freiman's theorem \cite{MR2302736}.
(Freiman had done the case $G=\mathbb{Z}$; see \cite{MR0360496}, or
the exposition \cite{zbMATH01315258}.) The strongest result on the abelian case
to date is that of Sanders (\cite{Sanders}; based in part on
\cite{MR2738997}).

The Classification of Finite Simple Groups\footnote{Famed in mathematical lore
  as the theorem whose proof would be of the size of a
  large encyclopedia, were it all in one place.} tells us that all
finite, simple, non-abelian groups $G$ fall into three classes:
\begin{enumerate}
\item simple groups of Lie type, that is, matrix groups over finite fields
  (such as $\PSL_n(\mathbb{F}_q)$ or $\PSp_{2n}(\mathbb{F}_q)$), including some
  generalizations (twisted groups);
\item alternating groups $\Alt(n)$. The simple group $\Alt(n)$ is the unique
  subgroup of index $2$ of the group $\Sym(n)$ of all permutations of $n$ elements;
\item\label{it:spac}
  a finite list of exceptions, including, for example, the ``monster group''.
  \end{enumerate}
We can put (\ref{it:spac}) out of our minds, since it has a finite number of
elements, and we are aiming for asymptotic statements.

\subsubsection{Simple groups of Lie type (and bounded rank)}
  Our main goal in these notes will be to prove the following theorem.
  \begin{theorem}\label{thm:main}
    Let $G=\SL_2(K)$ or $G=\PSL_2(K)$, $K$ a field. Let
    $A\subset G$ be a set of generators of $G$.  Then either
\begin{equation}\label{eq:croissa}|A^3|\geq |A|^{1+\delta}\end{equation}
or \begin{equation}A^3 = G,\end{equation}
where $\delta>0$ is an absolute constant.
  \end{theorem}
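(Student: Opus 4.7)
The plan is to show that if $A$ is symmetric, contains the identity, generates $G$, and satisfies $|A^3| \leq |A|^{1+\delta}$ for a sufficiently small absolute $\delta > 0$, then necessarily $A^3 = G$. I will focus on the finite-field case, since for $K$ infinite the hypothesis forces $A$ to be finite and the same machinery applies (in fact more easily). The first step is standard Plünnecke--Ruzsa calculus, which upgrades the tripling hypothesis to $|A^\ell| \leq |A|^{1 + C_\ell \delta}$ for every fixed $\ell$, so that bounded powers of $A$ can be used interchangeably with $A$ up to $|A|^{O(\delta)}$ factors.

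Next I invoke an ``escape from subvarieties'' step: the set $\{g \in G : \tr(g) = \pm 2\}$ is a proper Zariski-closed subvariety of $G$, so since $A$ generates $G$, a bounded word $g_0 \in A^{O(1)}$ must lie outside it, hence be regular semisimple. Its centralizer $T := C_G(g_0)$ is a maximal torus, a one-dimensional algebraic subgroup of the three-dimensional group $G$. The proof now reduces to analysing how $A$ meets $T$ and its conjugates.

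The core is a dichotomy based on $|A^{O(1)} \cap T|$. If for every $x \in A^{O(1)}$ the intersection $A^{O(1)} \cap xTx^{-1}$ is substantially smaller than $|A|^{2/3}$, then a Larsen--Pink-style non-concentration estimate forces $A$ to spread across many distinct conjugate tori, and a direct counting argument using the trace map already delivers $|A^3| > |A|^{1+\delta}$. Otherwise, some conjugate torus captures a large share of $A$; without loss of generality call it $T$. Since $A$ generates $G$, we can find $g \in A^{O(1)}$ with $g \notin N_G(T)$, so $T' := gTg^{-1}$ is a second maximal torus distinct from $T$. Parameterising products of the form $t_1 g t_2 g^{-1}$ with $t_1,t_2 \in A \cap T$ via the Bruhat decomposition converts the growth problem into a statement about both the sumset $X + X$ and the product set $X \cdot X$ for a set $X \subset K^\times$ of size $\approx |A \cap T|$, at which point the Bourgain--Katz--Tao sum-product theorem in $K$ provides the needed power saving.

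The principal obstacle is this concentrated case. The reduction to sum-product is delicate: one must choose coordinates in which conjugation by $g$ genuinely intertwines the additive and multiplicative structures coming from $T$, and one must verify that $X$ is not trapped inside a proper subfield of $K$ (which would render sum-product vacuous). Both verifications ultimately lean on the generation hypothesis together with escape from subvarieties. The dispersed case is conceptually cleaner but hinges on a sharp Larsen--Pink-type bound in positive characteristic, itself a substantial technical ingredient of the subject.
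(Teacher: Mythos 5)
There is a genuine gap, and it sits at the center of your dichotomy. You split according to whether some conjugate torus satisfies $|A^{O(1)}\cap xTx^{-1}|\gtrsim |A|^{2/3}$ or not. But the Larsen--Pink-type dimensional estimate (which you yourself invoke, and which holds unconditionally for generating sets, with a bounded power of $A$ on the right-hand side) gives $|A^{O(1)}\cap xTx^{-1}|\ll |A^{k}|^{1/3}\ll |A|^{1/3+O(\delta)}$ once $|A^3|\leq|A|^{1+\delta}$; so for small $\delta$ your ``concentrated'' case is vacuous, and the sum-product engine you placed there never fires. The entire burden then falls on the ``dispersed'' case, which you dismiss with ``a direct counting argument using the trace map.'' No such direct argument exists: non-concentration on tori (or on the $2$-dimensional trace fibers $V_t=\{\tr=t\}$) yields that $A$ meets many conjugacy classes, and, via the orbit--stabilizer inequality for sets, a \emph{lower} bound $|A^2\cap C(g)|\gg|A|^{1/3-O(\delta)}$ for a regular semisimple $g\in A^{O(1)}$ --- but converting those facts into $|A^3|\geq|A|^{1+\delta}$ is precisely the heart of the theorem. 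The paper does it by a pivoting argument (injectivity of $(a,t)\mapsto a\xi t\xi^{-1}$, with the three cases ``pivot in $A$'', ``no pivots'', ``pivots and non-pivots''), while the original proof did it by sum-product amplification off a torus carrying $\approx|A|^{1/3}$ elements of $A^2$ --- not $\approx|A|^{2/3}$, which is the regime your reduction assumes and which cannot occur. As written, the step that is supposed to produce growth in the only case that can actually arise is missing.

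A second, independent omission is the endgame. Your plan is to derive a contradiction from $|A^3|\leq|A|^{1+\delta}$, but the conclusion of the theorem is not merely ``$A$ grows'': when $|A|\geq|G|^{1-O(\delta)}$ neither counting arguments nor sum-product (which requires $|X|\leq p^{1-\epsilon}$, and over $\mathbb{F}_q$ a subfield-aware version) can give growth, and one must instead prove $A^3=G$ exactly. This requires the quasirandomness argument of Gowers/Nikolov--Pyber, resting on the fact that every nontrivial complex representation of $\SL_2(\mathbb{F}_q)$ has dimension $\geq(q-1)/2$; it is how the paper's Case (b) terminates, and nothing in your sketch supplies it. Finally, even where your concentrated case is meant to operate, growth of a set $X\subset K^\times$ under sums and products must still be transported back to growth of $A$ inside $G$ (via the torus and Ruzsa/orbit--stabilizer bookkeeping); that transfer is asserted rather than argued, and the subfield obstruction you flag is not resolved by the generation hypothesis alone, since $A$ may generate $G(K)$ while the relevant eigenvalue set lies in a proper subfield.
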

  Here $\PSL_2(K) = \SL_2(K)/\{I,-I\}$, where $\SL_2(K)$ is, of course,
  the group of $2$-by-$2$ matrices with entries in a field $K$ and
  determinant $1$.
  The group $\PSL_2(K)$ is simple for $K=\mathbb{F}_q$ finite. 
  It is a group of Lie type; indeed, it will be our white mouse,
  in that it is convenient to work with, but sufficiently
  complex to be a good example of a large class.
  
Theorem \ref{thm:main}  was first proved in \cite{Hel08}
for $K=\mathbb{F}_p$, with $A^k=G$ ($k$ a constant) instead of $A^3=G$.
It then underwent a series of generalizations
(\cite{BGSU2}, \cite{MR2788087}, \cite{HeSL3}, \cite{GH1},
\cite{BGT} and \cite{MR3402696}, among others). By now, we know it
for every simple group of Lie type of bounded rank
(\cite{BGT}, \cite{MR3402696}).
The ``bounded rank'' condition means simply that
the constant $\delta$ in the inequality $|A^3|\geq |A|^{1+\delta}$ depends
on the rank of the group. (The rank of $\SL_n$ is $n-1$, that of
$\SO_n$ is $\lfloor n/2\rfloor$, etc.) In fact, there are examples
(due to Pyber) that show that $\delta$ has to depend on the rank.

We will give a proof of Thm.~\ref{thm:main} that descends from, but is not the
same as, the proof in \cite{Hel08}; it has strong influences from
\cite{HeSL3}, \cite{BGT} and \cite{MR3402696}.
In particular, the proof we shall give generalizes readily to
$\SL_n$ and other higher-rank groups; many of our intermediate results will be
stated for $\SL_n$, and the ideas carry over to other group families.

\begin{prob}\label{prob:agant}
   Let $K$ be a finite field.
   Let $G = \PSL_2(K)$ or $G=\SL_2(K)$. Let $S\subset G$ generate $G$.
   Using Thm.~\ref{thm:main}, prove that the diameter of $\Gamma(G,S)$
   is $\ll (\log |G|)^C$, where $C$ and the implied constant are absolute.
   Indeed, $C=O(1/\delta)$, where $\delta$ is the absolute constant in
   (\ref{eq:croissa}). Hint: apply Thm.~\ref{thm:main} repeatedly, with
   $S$ equal to 
   $A$, $A^3$, $A^9$,\dots 
\end{prob}
In other words,
Babai's conjecture holds for $G = \PSL_2(\mathbb{F}_q)$. The bound
$\diam \Gamma(G,A)\ll (\log |G|)^C$ also holds for all
other simple groups of Lie type, only then $C$ depends
on the rank, since $\delta$ does.

Before \cite{Hel08}, $\Gamma(G,A)$ was known to be an expander for some particular sets of generators $A$ of $G=\SL_2(\mathbb{F}_q)$. In those cases, then,
the diameter bound $\diam \Gamma(G,A)\ll \log |G|$ was also known.
The main element of the proof came from modular forms (Selberg's spectral gap
\cite{MR0182610}).

Impatient readers may now jump to the body of the text and leave the
rest of the introduction for later. They should certainly read
\S \ref{sec:exprawsie}, on applications of Theorem \ref{thm:main}
to expander graphs.

\subsubsection{The simple group $\Alt(n)$}

For $G=\Alt(n)$, we have a statement that is somewhat weaker than
Babai's conjecture.

\begin{theorem}$($Helfgott-Seress, \cite{MR3152942}$)$\label{thm:hs}
Let $G=\Sym(n)$ or $G=\Alt(n)$. Let $A\subset G$ be a set of generators
of $G$.  Then
\begin{equation}\label{eq:hs}
  \diam(G,A) = e^{O\left( (\log n)^4 (\log \log n)\right)} =
e^{O_\epsilon\left((\log \log |G|)^{4+\epsilon}\right)}\end{equation}
for $\epsilon>0$ arbitrary.
\end{theorem}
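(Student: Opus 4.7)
The plan is to show that for any generating set $A$ of $G = \Sym(n)$ or $\Alt(n)$, every element of $G$ can be expressed as a word of length at most $N := \exp(O((\log n)^4 \log \log n))$ in $A \cup A^{-1}$. My overall strategy is to reduce the problem to manufacturing, within this budget, a single element of very small support (a $3$-cycle), and then to leverage that to reach all of $G$.

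First I would handle the easy reduction: once a $3$-cycle $\sigma$ has been expressed as a word of length at most $L$ comfortably inside our target, every element of $\Alt(n)$ is a product of $O(n)$ $3$-cycles, and every $3$-cycle is an $\Alt(n)$-conjugate of $\sigma$. Schreier-style sifting through a chain of point-stabilizers produces short conjugators, adding only a polynomial-in-$n$ multiplicative overhead to $L$. So the real task is producing $\sigma$.

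The main body is a recursive descent matching the imprimitive/primitive structure of the action of $G$ on $\{1,\dots,n\}$. If the action has a non-trivial block system of block size $b$, then the induced action on the $n/b$ blocks and the action of the block-stabilizer on a single block both have smaller degree; applying the bound recursively to each, and combining via Schreier generators for the block-stabilizer, controls the diameter of $G$ with only polylogarithmic overhead per level. The recursion has depth $O(\log n)$, so each level must be paid for with word length roughly $(\log n)^3 \log \log n$. In the primitive case, if the action is not the natural action of $\Alt(n)$ or $\Sym(n)$, then Babai's theorem (a consequence of the Classification of Finite Simple Groups) bounds the group's order by $\exp(O((\log n)^2))$, and a trivial breadth-first search finds any element as a word of length $O((\log n)^2)$.

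The crux, and where the exponent $(\log n)^4 \log \log n$ originates, is the ``giant'' primitive case, where we must manufacture a $3$-cycle from an arbitrary generating set of $\Alt(n)$ in its natural action. Here I would organize the work as a descent through a chain of stabilizer subgroups $G = G_0 > G_1 > \cdots > G_k$ with $k = O(\log n)$, showing at each step that short words generating $G_i$ on its $n_i$ moved points yield short words generating $G_{i+1}$ on $n_{i+1} \leq n_i/2$ moved points. The transition between consecutive levels is engineered via the Babai--Seress style of commutators: take two short words whose supports intersect in only a few points, and their commutator kills the complement and lands inside a much smaller stabilizer; a pigeonhole argument on orbit structures guarantees such pairs of short words exist. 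The hardest step will be the quantitative bookkeeping at each level: one must certify that the generated subgroup of the stabilizer still acts transitively (or at least with controlled orbit structure) on its moved points, so that the descent does not stall, and that the per-level cost is only polylogarithmic. Balancing these constraints is what forces the exponent $4$ and the extra $\log\log n$ factor; improving either without losing control of the descent appears to require genuinely new ideas.
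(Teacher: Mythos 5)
First, note that the survey you are reading does not prove Theorem \ref{thm:hs}: it is quoted from Helfgott--Seress \cite{MR3152942}, so your proposal has to be measured against the proof there. Your opening reduction is fine as far as it goes (a $3$-cycle, or more generally an element of support at most $n/(3+\varepsilon)$, turns the problem into a polynomial-overhead one via Babai--Beals--Seress-type arguments, and conjugators can be found of length $\leq n^3$ since the Schreier graph on ordered triples has at most $n^3$ vertices). But your ``main body'' is misplaced: $\Sym(n)$ and $\Alt(n)$ in their natural action are $2$-transitive, hence primitive, so there is no non-trivial block system to recurse on. The imprimitive/primitive descent you describe is how one deduces the \emph{corollary} for arbitrary transitive subgroups of $\Sym(n)$ from Theorem \ref{thm:hs}, not a step in proving it; and inside that discussion two claims are wrong: a primitive group other than a giant need not have order $e^{O((\log n)^2)}$ (Cameron groups such as $\Alt(m)$ acting on $2$-subsets have order $e^{\Theta(\sqrt{n}\log n)}$, and handling them again requires the theorem itself), and breadth-first search only yields words of length at most $|G|$, not $\log|G|$.

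The genuine gap is in what you yourself flag as the ``hardest step'': that step \emph{is} the theorem. Your mechanism -- pigeonhole two short words whose supports meet in few points and take their commutator -- fails exactly in the difficult case, namely when every element of $A$ and of its short powers has support of size close to $n$ (e.g.\ all fixed-point-free); then no pigeonhole on supports can produce a pair with small intersection, and the bound $|\supp([g,h])|\leq 3|\supp(g)\cap\supp(h)|$ has nothing to bite on. The actual proof manufactures elements with many fixed points by a different route: it studies the action of short words on ordered tuples of length $O(\log n)$, applies the orbit--stabilizer theorem for sets (Lemma \ref{lem:orbsta} of this survey, which is taken from that very paper) to place many short words inside pointwise stabilizers, uses a probabilistic ``splitting''/random-walk argument to create the needed elements, and then must certify -- using classification-free results of Babai and Pyber on primitive groups -- that the stabilizer acts as a giant on a large subset, so that the descent can recurse without stalling; only after an element of support $\leq n/3$ or so is produced does the easy endgame you describe take over. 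None of this machinery appears in your sketch, and the certification step cannot be waved away as ``bookkeeping''. Finally, the budget arithmetic should read: each of the $O(\log n)$ levels of descent costs a \emph{multiplicative} factor $e^{O((\log n)^3\log\log n)}$ in word length, which is what compounds to the bound in (\ref{eq:hs}); it is not an additive polylogarithmic cost per level.
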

In fact, the bound $\diam(G,A) = \exp(O((\log n)^4 (\log \log n)))$ holds
for all transitive groups $G<\Sym(n)$, and can be deduced from
Thm.~\ref{thm:hs}. 
We could state this result as follows:
let us be given a permutation puzzle with $n$ pieces that has a solution and
satisfies transitivity (that is, any piece can be sent to any other one
by some succession of moves). Then
there is always a short solution, starting from any reachable position.
Incidentally, non-transitive 
puzzles, such as Rubik's cube, can be
reduced to transitive ones at some cost, by means of Schreier generators.

We cannot have a product theorem just like Thm.~\ref{thm:main}
in $\Alt(n)$ or $\Sym(n)$. 
\begin{coun}[Pyber, Spiga]
  Let $H$ be the subgroup of $\Sym(n)$ consisting of all permutations of
  $\{1,\dotsc,m\}$. Let $\sigma$ be the cycle taking $i$ to $i+1$
  ($i\leq n-1$) and $n$ to $1$. Let $A = H \cup \{\sigma,\sigma^{-1}\}$. Then
\[|A^3| = \left|\{\sigma,\sigma^{-1},e\} \cdot H \cdot \{\sigma,\sigma^{-1},e\}
\cup H \sigma^{\pm 1} H\right| \leq 9 m! + 2 (m+1)! \leq (2 m + 11) |A|.\]
The factor $(2 m + 11)$ compared to $|A|$ for $A$ large; 
if we set, say, $m\sim n/2$, then $(2 m + 11) \ll |A|^{3/n}$.
\end{coun}

It might be that one of $|A^{O(n^C)}|\geq |A|^{1+\delta}$ or $A^{O(n^C)} = G$
always holds. Even having one of
$\left|A^{O(n^C)}\right| \geq |A|^{1+\delta/\log n}$ or $A^{O(n^C)} = G$ would be a definite
improvement over Thm.~\ref{thm:hs}. The exponents $4$ in
(\ref{eq:hs}) would become $3$, and, at any rate, as we shall later see,
product theorems have consequences other than diameter bounds.

It would be natural to hope that some ideas in \ref{thm:hs},
or its later version \cite{MR3931419},
or future strengthenings thereof, will be useful in addressing Babai's
conjecture over groups of Lie type of unbounded rank. It is not just
that the known counterexamples to strong product theorems over $\Sym(n)$ and
$\SL_n$ are related. There are ways to define the
``field with one element'' $\mathbb{F}_{\un}$, and objects over it;
then one generally obtains that $\Sym(n)\sim \SL_n(\mathbb{F}_{\un})$.
See, e.g., \cite{LorscheidFun}.

\subsubsection{Solvable and nilpotent groups}
A group $G$ is {\em solvable} if it has a subnormal series
\begin{equation}\label{eq:subno}
  \{e\}=H_0 \triangleleft H_1 \triangleleft \dotsb \triangleleft H_k = G
\end{equation}
all of whose quotients $H_{i+1}/H_i$ are abelian. As we said before,
questions on growth behave well under quotients, but such a reduction
does not help us as much as we would like,
since the best results available for the
abelian case are considerably less strong than
$|A\cdot A\cdot A|\geq |A|^{1+\delta}$.

A solvable group is {\em nilpotent} if it has a subnormal series
(\ref{eq:subno}) with $G_{i+1}/G_i$ contained in the center of
$G/G_i$ for every $0\leq i<k$. Nilpotent groups can often be seen as
``almost abelian'', and our context is no exception. One should not
hope to get stronger results on growth in nilpotent groups than for
abelian groups -- and, on the positive side, one can study nilpotent
groups with Freiman's and Ruzsa's tools, supplemented by a
Lie-algebra framework (\cite{MR3254927}; see also
\cite{MR2587441} and Tao \cite{MR2791295}).

What one can aim for is to show that, given a set $A$ in a solvable group,
either $A$ grows rapidly, or we are really in a nilpotent case.
We can make such a statement precise as follows.

\begin{conj}\label{conj:helflind}
Let $A\subset \GL_n(K)$,
$K$ a field. Assume that the group $\langle A\rangle$ generated by $A$
is solvable. Then, for any $C\geq 1$, either
\begin{equation}\label{eq:rodorar}
  |A^3|\geq C |A|\end{equation}
or there are subgroups $N\triangleleft G_0\triangleleft \langle A\rangle$
such that $G_0/N$ is nilpotent and
\begin{equation}\label{eq:rodorar2}
  N\subset (A\cup A^{-1}\cup \{e\})^k,\;\;\;\;\;
  \left|(A\cup A^{-1}\cup \{e\})^k \cap G_0\right|\geq C^{-O_n(1)} |A|,
\end{equation}
where $k$ depends only on $n$.
\end{conj}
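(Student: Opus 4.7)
The plan is to pass to the Zariski closure of $\langle A\rangle$ and combine the Lie--Kolchin structure of connected solvable linear algebraic groups with Plünnecke--Ruzsa and an ``escape from subvarieties'' dimension descent.

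First, let $H = \overline{\langle A\rangle}^{\mathrm{Zar}} \subset \GL_n(\overline{K})$; solvability is preserved by Zariski closure, so $H$ is a solvable linear algebraic group. Its identity component $H^\circ$ has index $O_n(1)$ and, by Lie--Kolchin over $\overline{K}$, is conjugate to a subgroup of upper-triangular matrices, so $U := [H^\circ,H^\circ]$ is unipotent (hence nilpotent) and $H^\circ/U$ is a subtorus $T$. I would take $G_0 := \langle A\rangle \cap H^\circ$, which is normal in $\langle A\rangle$ because $H^\circ$ is characteristic in $H$. Assuming $|A^3| < C|A|$, the Plünnecke--Ruzsa inequalities give $|(A\cup A^{-1}\cup\{e\})^\ell| \leq C^{O_\ell(1)}|A|$ for every fixed $\ell$; pigeonholing across cosets of $H^\circ$ produces a subset of $A\cdot A^{-1}\cap G_0$ of size $\gg_n |A|$, already giving the second clause of (\ref{eq:rodorar2}).

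To construct $N$, I look inside $U$. Project through $\pi:H^\circ\to T$: the image $\pi(A\cap H^\circ)$ lies in an abelian group with tripling at most $C^{O(1)}$, and by a Freiman-type theorem for abelian groups (in the strongest available form, Sanders) is controlled by a coset progression of rank $O_n(1)$. Pairs of elements of $A$ in the same fibre of $\pi$ differ by elements of $A\cdot A^{-1}\cap U$; iterating commutators with $A$ and taking Zariski closures inside $U$ produces an ascending chain of algebraic subgroups that must stabilise in $\leq \dim U = O(n^2)$ steps. Let $\widetilde N \subset U$ be the limit; by construction $\widetilde N$ is normalised by $A$, hence by $\langle A\rangle$, and its generating set lies in $(A\cup A^{-1}\cup\{e\})^{k_0}$ for some $k_0 = O_n(1)$. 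Set $N = \widetilde N \cap \langle A\rangle$: then $G_0/N$ is an extension of the abelian image $\pi(G_0)$ by the nilpotent group $(U\cap G_0)/N$, and hence is itself nilpotent.

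The hardest step will be the quantitative closure: one must show not merely that $\widetilde N$ is \emph{generated} by elements of $(A\cup A^{-1}\cup\{e\})^{k_0}$, but that \emph{every} element of $N = \widetilde N \cap \langle A\rangle$ lies in $(A\cup A^{-1}\cup\{e\})^{k}$ for some $k = O_n(1)$. This requires a quantitative version of the fact that every element of a unipotent algebraic group of bounded dimension is a bounded-length product of commutators of generators, together with careful tracking of constants through Plünnecke--Ruzsa. A secondary subtlety is ensuring normality of $N$ in $\langle A\rangle$ rather than in $H^\circ$ alone, which one handles by intersecting $\widetilde N$ with its $O_n(1)$ conjugates under coset representatives of $H^\circ$ in $H$. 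Balancing $k$ (which grows with the depth of commutator words) against the exponent in the factor $C^{-O_n(1)}$ is the principal quantitative challenge.
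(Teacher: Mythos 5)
You are attempting to prove something the paper explicitly presents as an \emph{open conjecture}: immediately after the statement, the text records that Conjecture~\ref{conj:helflind} is known for $K=\mathbb{F}_p$ (Gill--Helfgott \cite{MR3263161}) and for $K=\mathbb{C}$ \cite{BrGrII}, and that ``the case $K=\mathbb{F}_q$ remains open.'' So the paper contains no proof to compare against, and a two-page reduction cannot be complete; indeed your sketch has concrete breaks. The most serious one is the final step: ``$G_0/N$ is an extension of the abelian image $\pi(G_0)$ by the nilpotent group $(U\cap G_0)/N$, and hence is itself nilpotent.'' Nilpotent-by-abelian groups are not nilpotent in general; the affine group (\ref{eq:boreloid2}) itself, an extension of the abelian $T$ by the abelian $U$, is the canonical counterexample. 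To make $G_0/N$ nilpotent, $N$ must absorb the nontrivial torus action on the unipotent radical, i.e.\ $N$ must essentially be the relevant unipotent group itself; and then the requirement $N\subset (A\cup A^{-1}\cup\{e\})^k$ is not ``careful tracking of constants through Pl\"unnecke--Ruzsa.'' It is the dichotomy ``either $|A^3|\geq C|A|$ or a bounded power of $A$ \emph{contains} $U$,'' which is exactly what the pivoting/sum-product arguments of Prop.~\ref{prop:jutor} and Prop.~\ref{prop:sabat} deliver over $\mathbb{F}_p$, and which is precisely what is open over $\mathbb{F}_q$ (subfield obstructions in the torus action make the pivoting step fail as stated). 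Your ascending chain of Zariski closures only yields a subgroup $\widetilde N$ \emph{generated} by elements of a bounded power of $A$; bounded dimension of $U$ does not convert generation into containment, since the needed word length in those generators is a priori unbounded (think of $U\cong\mathbb{F}_q$ of size $q^{\dim U}$ versus $|A^{k_0}|$).

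Two further gaps: the claim that $H^\circ$ has index $O_n(1)$ in $H=\overline{\langle A\rangle}$ is false for solvable algebraic groups --- the group $\mu_m<\GL_1$ of $m$-th roots of unity is solvable with trivial identity component and $m$ components --- so the pigeonhole giving $|A\cdot A^{-1}\cap G_0|\gg_n |A|$ collapses, and in such examples the large structured piece does not sit inside $H^\circ$ at all (one must instead take $G_0=\langle A\rangle$ with $N$ trivial), so the reduction to $H^\circ$ is structurally wrong, not just lossy. Finally, the appeal to Sanders' theorem is both unnecessary (the conjecture permits the abelian image to be completely unstructured, since it is swallowed by the nilpotent quotient) and quantitatively inadequate for a conclusion of the shape $C^{-O_n(1)}|A|$, as the paper notes when discussing the polynomial Freiman--Ruzsa problem. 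If you want a genuine target, work through the $\mathbb{F}_p$ case along the lines of \S\ref{subs:affi} and \cite{MR3263161}, and isolate exactly where the argument uses that $\mathbb{F}_p$ has no proper subfields.
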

We can, of course, set $C = |A|^\delta$, so
that (\ref{eq:rodorar}) has the familiar form $|A^3|\geq |A|^{1+\delta}$.

Gill and Helfgott proved Conjecture \ref{conj:helflind} for $K = \mathbb{F}_p$
\cite{MR3263161}.
The case $K=\mathbb{F}_q$ remains open. The case $K=\mathbb{C}$ is
relatively straightforward \cite{BrGrII}; in that case, the group $N$
can be taken to be trivial.

Putting the result for $K = \mathbb{F}_p$ together with
\cite{MR3402696}, it is simple to show that the same result holds for
$A\subset \GL_n(\mathbb{F}_p)$ general, without the assumption that
the group $\langle A\rangle$ generated by $A$
be solvable. (What \cite{MR3402696} does is reduce the general case to the solvable
case.) Again, the same conclusion is believed to hold over $\mathbb{F}_q$.
Breuillard, Green and Tao have proved \cite{BGTgen} that, if one is willing to
replace $C^{-O_n(1)}$ in (\ref{eq:rodorar2}) by a factor dependent
in an unspecified way on $C$ (but still independent of $|A|$), one
does not even need to assume that $A$ is contained in $\GL_n(K)$; they start
from a completely general, abstract group. They kindly gave the name
{\em Helfgott-Lindenstrauss conjecture} to the statement they proved, though
I would personally give that name to Conj.~\ref{conj:helflind}.
  
We shall study what is arguably the simplest interesting solvable case,
namely, the {\em affine group}
\begin{equation}\label{eq:boreloid2}
  \left\{\left(\begin{matrix} r & x\\0 &
      1\end{matrix}\right) : r\in K^*, x\in K\right\}.
\end{equation}
over a field $K$. As we shall see, the question of growth in it is essentially
equivalent to the {\em sum-product theorem} over a field.
Indeed, our treatment (\S \ref{subs:affi}) will show how to take
one of the ideas of proofs of the sum-product theorem over finite fields
(as in \cite{MR2053599} or \cite{MR2225493}) and reinterpret it in the context of groups (``pivoting''). A version of the same idea (really just a form of
induction) will appear again in our treatment of $\SL_2(K)$.

\subsubsection{Groups over $\mathbb{R}$ or $\mathbb{C}$}

The proof we shall give of Theorem \ref{thm:main} also works for $K$ infinite.
Even the first proof worked for $K=\mathbb{R}$, indeed more easily than over
$\mathbb{Z}/p\mathbb{Z}$. Actually the statement of Theorem \ref{thm:main}
turns out to have already been known over $\mathbb{R}$: the proof of
\cite[Thm.~2]{MR1869409} suffices to establish it.

Some results in combinatorics -- such as the sum-product theorem, which
underlay the first proof \cite{Hel08} of Thm.~\ref{thm:main}, or Beck's
theorem \cite{Beck}, on which \cite{MR1869409} relies --
are both stronger and easier to prove over the reals than over finite fields.
In fact, some results are known only over $\mathbb{R}$, or were known only
over $\mathbb{R}$ for many years. The reason is that, over $\mathbb{R}$,
the topology of the real plane can be used in the solution of geometrical
problems. A line divides the real plane into two halves; such a statement
does not hold or even make sense over $\mathbb{Z}/p\mathbb{Z}$.

As it turns out, for many applications, we need to know not just a statement
such as Theorem \ref{thm:main} for a linear group over the reals, but
a stronger version thereof. To be precise: one needs to show that the
maximal number $n_\delta(A)$ of points in $A$ separated by $\delta$ in the
real or complex metric grows: $n_\delta(A^3) \geq n_\delta(A)^{1+\delta}$.

Fortunately, as Bourgain and Gamburd first made
clear \cite{BGSU2}, existing proofs of Theorem \ref{thm:main} and its
generalizations can be modified to yield such stronger variants.
They worked with the proof in \cite{Hel08}, but the same should hold
of later proofs. The applications they found consisted in or involved
expander graphs. We will discuss results on expander graphs
in \S \ref{sec:exprawsie}.



\subsection{Notation}\label{subs:monmouth}
By $f(n)\ll g(n)$, $g(n)\gg f(n)$ and $f(n) = O(g(n))$ we mean the 
same thing, namely, that there are $N>0$, $C>0$ such that $|f(n)|\leq C\cdot 
g(n)$ for all $n\geq N$. We write $\ll_a$, $\gg_a$, $O_a$ if 
$N$ and $C$ depend on $a$ (say). 

As usual, $f(n) = o(g(n))$ means that
$|f(n)|/g(n)$ tends to $0$ as $n\to \infty$.
We write $O^*(x)$ to mean any quantity at most $x$ in absolute value.
Thus, if $f(n)=O^*(g(n))$, then $f(n)=O(g(n))$ (with $N=1$ and $C=1$).

Given a subset $A\subset X$, we let
 $1_A:X\to \mathbb{C}$ be the characteristic function of
$A$:
\[1_A(x) = \begin{cases} 1 &\text{if $x\in A$,}\\ 0 &\text{otherwise.}
\end{cases}
\]

\section{Elementary tools}
\subsection{Additive combinatorics}
Some of additive combinatorics can be described as the study of {\em sets
that grow slowly}. In abelian groups, results are often stated so as
to classify sets $A$ such that $|A^2|$ is not much larger than $|A|$;
in non-abelian groups, works starting with \cite{Hel08} classify sets
$A$ such that $|A^3|$ is not much larger than $|A|$. Why?

In an abelian
group, if $|A^2| < K |A|$, then $|A^k| < K^{O(k)} |A|$ -- i.e., if a set
does not grow after one multiplication with itself, it will not grow
under several.
This is a result of Pl\"unnecke \cite{MR0266892} and Ruzsa \cite{MR2314377}.
(Petridis \cite{MR3063158} recently gave a purely additive-combinatorial
proof.)

In a non-abelian group $G$, there can be sets $A$ breaking this rule.
\begin{prob}
  Let $G$ be a group. Let $H<G$, $g\in G\setminus H$ and
  $A = H \cup \{g\}$. Then $|A^2| < 3 |A|$, but $A^3\supset H g H$, and
$H g H$ may be much larger than $A$. Give an example with
  $G=\SL_2(\mathbb{F}_p)$. {\em Hint: let $H$ is the subgroup of
    $G$ consisting of the elements $g\in G$
    leaving the basis vector $e_1 = (1,0)$ fixed.}
\end{prob}

However, Ruzsa's ideas do carry over to the non-abelian case, as was
pointed out in \cite{Hel08} and \cite{MR2501249}.
We must 
assume that $|A^3|$ is small, not just $|A^2|$, and then it does follow
that $|A^k|$ is small. The formal statement is Exercise \ref{exer:triplema},
below. To prove it, we need the following lemma.

\begin{lem}[Ruzsa triangle inequality]\label{lem:schatte}
Let $A$, $B$ and $C$ be finite subsets of a group $G$. Then
\begin{equation}\label{eq:eolt}
|A C^{-1}| |B| \leq |A B^{-1}| |B C^{-1}| .\end{equation}
\end{lem}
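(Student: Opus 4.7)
The plan is to prove the inequality by constructing an explicit injection
\[
\phi : B \times (AC^{-1}) \longrightarrow (AB^{-1}) \times (BC^{-1}),
\]
which immediately gives $|B|\,|AC^{-1}| \leq |AB^{-1}|\,|BC^{-1}|$. This is the standard ``triangle inequality'' trick: we factor each element of $AC^{-1}$ in a canonical way and interpolate an element of $B$ in the middle.

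More precisely, I would first fix, for each $d \in AC^{-1}$, a choice of representatives $a_d \in A$ and $c_d \in C$ with $d = a_d c_d^{-1}$ (this uses the axiom of choice applied to a finite set, or just a concrete enumeration). Then I would define
\[
\phi(b,d) \;=\; \bigl(a_d b^{-1},\; b c_d^{-1}\bigr).
\]
Note that $a_d b^{-1} \in AB^{-1}$ and $bc_d^{-1} \in BC^{-1}$, so $\phi$ maps into the right target.

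The key step is verifying injectivity. Given the pair $(x,y) = (a_d b^{-1}, b c_d^{-1})$, multiply: $xy = a_d c_d^{-1} = d$. Thus $d$ is recovered from $(x,y)$, and since $a_d$ and $c_d$ were fixed once and for all by our choice function, we recover $b = a_d^{-1} \cdot (a_d b^{-1})^{-1} \cdot 1 $ — more simply, $b = (a_d b^{-1})^{-1} a_d$, or equivalently $b = y \cdot c_d$. Hence the preimage $(b,d)$ is uniquely determined, and $\phi$ is injective.

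I do not expect a real obstacle here; the only subtle point is ensuring the canonical factorization of $d$ is made \emph{before} $b$ enters the picture, so that the same $(a_d,c_d)$ appear on both coordinates of $\phi(b,d)$ and can be cancelled out when inverting. Once that is in place, injectivity is a one-line check and the inequality \eqref{eq:eolt} follows. No hypotheses on $G$ (abelian or not) are needed.
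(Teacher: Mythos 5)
Your proof is correct and is essentially the same as the paper's: you fix a factorization $d=a_d c_d^{-1}$ in advance and use the map $(b,d)\mapsto(a_d b^{-1},\,b c_d^{-1})$, recovering $d$ as the product of the two coordinates and then $b$ from either one — exactly the injection the paper constructs (with the two factors of the domain written in the opposite order). The only blemish is the garbled intermediate expression $b = a_d^{-1}(a_d b^{-1})^{-1}\cdot 1$, but you immediately replace it with the correct formulas $b=(a_d b^{-1})^{-1}a_d = (bc_d^{-1})c_d$, so the argument stands.
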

Commutativity is not needed. In fact, what is being used is in some
sense more basic than a group structure; as shown in \cite{MR3348800}, the same
argument works naturally in any abstract projective plane endowed with the
little Desargues axiom.
\begin{proof}
We will construct an injection
$\iota:A C^{-1} \times B \hookrightarrow A B^{-1} \times
 B C^{-1}$. For every 
$d\in A C^{-1}$, choose $(f_1(d),f_2(d)) = (a,c)\in A\times C$ such that
$d = a c^{-1}$. Define $\iota(d,b) = (f_1(d) b^{-1}, b (f_2(d))^{-1})$.
We can recover $d = f_1(d) (f_2(d))^{-1}$ from $\iota(d,b)$; hence
we can recover $(f_1,f_2)(d)=(a,c)$, and thus $b$ as well. Therefore,
 $\iota$ is an injection.
\end{proof}

\begin{prob}\label{exer:triplema}
  Let $G$ be a group. Prove that
\begin{equation}\label{eq:mony}
  \frac {|(A \cup A^{-1} \cup \{e\})^3|}{|A|} \leq \left(3\frac{|A^3|}{|A|}\right)^3\end{equation}
  for every finite subset $A$ of $G$. Show as well that, if $A = A^{-1}$
  (i.e., if $g^{-1}\in A$ for every $g\in A$), then
  \begin{equation}\label{eq:jotor}
    \frac{|A^k|}{|A|} \leq \left(\frac{|A^3|}{|A|}\right)^{k-2}.\end{equation}
  for every $k\geq 3$. Conclude that
  \begin{equation}\label{eq:marmundo}\frac{|(A\cup A^{-1}\cup \{e\})^k|}{|A|} \leq 3^{k-2} \left(\frac{|A^3|}{|A|}\right)^{3(k-2)}\end{equation}
  for every $A\subset G$ and every $k\geq 3$.
\end{prob}
Inequalities (\ref{eq:mony})--(\ref{eq:marmundo}) go back to
Ruzsa (or Ruzsa-Turj\'anyi \cite{MR810596}), 
at least for $G$ abelian.

This means that, from now on,
we can generally focus on studying when $|A^3|$ is or isn't much larger
than $|A|$. Thanks to (\ref{eq:mony}), we can also assume in many
contexts that $e\in A$ and $A = A^{-1}$ without loss of generality.

\subsection{The orbit-stabilizer theorem for sets}
A theme recurs in work on growth in groups: results on subgroups
can often be generalized to subsets. This is especially the case if the
proofs are quantitative, constructive, or, as we shall later see,
probabilistic.

The {\em orbit-stabilizer theorem for sets} is a good example, both because
of its simplicity (it should really be called a lemma) and because
it underlies a surprising number of other results on growth. 
It also helps to
put forward a case for seeing group actions, rather than groups themselves,
as the main object of study. 

We recall that an {\em action} $G\curvearrowright X$ is a homomorphism from a group $G$ to 
the group of automorphisms of a set $X$. (The automorphisms of a set $X$
are just the bijections from $X$ to $X$; we will see actions on objects with
richer structures later.)  For $A\subset G$ and $x\in X$, 
the {\em orbit} $A x$ is the
set $A x = \{g\cdot x : g\in A\}$. The {\em stabilizer} $\Stab(x)\subset G$ 
is given by $\Stab(x) = \{g\in G: g\cdot x = x\}$.

The statement we are about to give is
as in \cite[\S 3.1]{MR3152942}.

\begin{lem}[Orbit-stabilizer theorem for sets]\label{lem:orbsta}
Let $G$ be a group acting on a set $X$. Let $x\in X$, and let $A\subseteq G$ be non-empty. 
Then
\begin{equation}\label{eq:applepie}
|(A^{-1} A) \cap \Stab(x)|\geq \frac{|A|}{|A x|}.\end{equation}
Moreover, for every $B\subseteq G$,
\begin{equation}\label{eq:easypie}
|B A| \geq |A \cap \Stab(x)| |B x| .
\end{equation}
\end{lem}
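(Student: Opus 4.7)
The plan is to prove both inequalities via explicit injections, closely following the logic of the classical orbit-stabilizer theorem but using only set-theoretic language (no subgroup of $G$ is being counted, only a subset of $A^{-1}A$ or of $BA$).

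For (\ref{eq:applepie}), I would consider the orbit map $A \to A x$, $g \mapsto g\cdot x$, which is surjective by definition of $A x$. By pigeonhole some fiber has size at least $|A|/|A x|$: that is, there exists $g_0 \in A$ such that $A_0 := \{g \in A : g\cdot x = g_0 \cdot x\}$ satisfies $|A_0| \geq |A|/|A x|$. The crucial observation is that for each $g \in A_0$ the element $g_0^{-1} g$ simultaneously lies in $A^{-1} A$ (by construction) and in $\Stab(x)$, since
\[
g_0^{-1} g \cdot x = g_0^{-1} \cdot (g\cdot x) = g_0^{-1}\cdot (g_0 \cdot x) = x.
\]
The map $g \mapsto g_0^{-1} g$ is injective on $A_0$ (left multiplication by $g_0^{-1}$ is a bijection on $G$), so $|(A^{-1}A) \cap \Stab(x)| \geq |A_0| \geq |A|/|A x|$, as desired.

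For (\ref{eq:easypie}), I would construct an injection $(A \cap \Stab(x)) \times B x \hookrightarrow B A$. For each $y \in B x$, fix some representative $b_y \in B$ with $b_y \cdot x = y$. Define $\iota(s,y) = b_y s$ for $s \in A \cap \Stab(x)$ and $y \in B x$; clearly $\iota(s,y) \in B A$. To prove injectivity, suppose $b_y s = b_{y'} s'$. Applying this equality to $x$ and using $s\cdot x = s' \cdot x = x$ gives $b_y \cdot x = b_{y'} \cdot x$, i.e.\ $y = y'$; then $b_y = b_{y'}$ and hence $s = s'$. This yields $|B A| \geq |A \cap \Stab(x)|\,|B x|$.

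Neither step looks like a real obstacle; the whole lemma is a careful bookkeeping argument. The only point to watch is that the injections really land in the claimed sets with the claimed membership properties, which is why one must choose the representatives $g_0$ and $b_y$ \emph{before} defining the maps. It is worth noting that the argument never uses that $A$ or $B$ is a subgroup, which is precisely what makes this a useful tool in the growth-theoretic setting.
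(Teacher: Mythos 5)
Your proof is correct, and it follows exactly the route the paper intends: the text leaves Lemma \ref{lem:orbsta} as an exercise with the hint to use the pigeonhole principle for (\ref{eq:applepie}), which is precisely your fiber argument, and your injection $(A\cap\Stab(x))\times Bx \hookrightarrow BA$ via fixed coset representatives $b_y$ is the standard counting argument for (\ref{eq:easypie}). Both verifications (membership in $A^{-1}A\cap\Stab(x)$, and injectivity by first evaluating at $x$) are carried out correctly, so there is nothing to add.
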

The usual orbit-stabilizer theorem -- usually taught as part of a
first course in group theory -- states that, for $H$ a subgroup of $G$,
\[|H\cap \Stab(x)| = \frac{|H|}{|H x|}.\]
This the special case $A = B = H$ of the Lemma we (or rather you)
are about to prove.

\begin{prob}
  Prove Lemma \ref{lem:orbsta}. Suggestion: for
  (\ref{eq:applepie}), use the pigeonhole principle.
\end{prob}

If we try to apply Lemma \ref{lem:orbsta} to the (left)
action of the group $G$ on itself by left multiplication
\[g \mapsto (h\mapsto g\cdot h)\]
or to the (left) action by right multiplication
\[g \mapsto (h\mapsto h\cdot g^{-1}),\]
we do not get anything interesting: the stabilizer of any element is trivial.
The same is of course true of the right actions
$g\mapsto (h\mapsto g^{-1} h)$ and $g\mapsto (h\mapsto h \dot g)$.
However, we also have the action by conjugation
\[g \mapsto (h\mapsto g h g^{-1}).\]
The stabilizer of a point $h\in G$ is its {\em centralizer}
\[C(h) = C_G(h) = \{g\in G : g h = h g\};\]
the orbit of a point $h\in G$ under the action of the group $G$
is the {\em conjugacy class}
\[\Cl(h) = \{g h g^{-1}: g\in G\}.\]

Thus, we obtain the following result, which will show itself to be crucial
later. Its importance resides in making upper bounds 
on intersections of $A$ (or rather $A^{l+2}$)
with $\Cl(g)$ imply lower bounds on intersections of $A^2$ with
$C(g)$. In other words, the plan is to show that there are not too many
elements of $A^{l+2}$
of a special form, and then Lemma \ref{lem:lawve} will imply that there
are many elements of $A^{2}$ of another special form. Having many elements
of a special form will be very useful.

\begin{lem}\label{lem:lawve}
 Let $A\subset G$ be a non-empty set with $A = A^{-1}$. Then,
for every $g\in A^l$, $l\geq 1$,
\[|A^2\cap C(g)|\geq \frac{|A|}{|A^{l+2}\cap \Cl(g)|}.\]
\end{lem}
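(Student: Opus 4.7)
The plan is to apply the orbit-stabilizer theorem for sets (Lemma \ref{lem:orbsta}, inequality (\ref{eq:applepie})) to the action of $G$ on itself by conjugation, taking $x = g$. Under this action, the stabilizer of $g$ is precisely the centralizer $C(g)$, and the orbit of $g$ under all of $G$ is $\Cl(g)$. The inequality (\ref{eq:applepie}) then reads
\[
|(A^{-1} A) \cap C(g)| \;\geq\; \frac{|A|}{|A\cdot g|},
\]
where here $A\cdot g = \{a g a^{-1} : a\in A\}$ denotes the orbit of $g$ under $A$ via conjugation. The whole proof then reduces to matching the two sides of this inequality to the two sides of what we want.

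On the left, since $A = A^{-1}$, we have $A^{-1} A = A \cdot A = A^2$, so $(A^{-1} A)\cap C(g) = A^2\cap C(g)$, which is exactly the quantity on the left-hand side of the desired bound. On the right, the key observation is to bound $|A\cdot g|$ from above by $|A^{l+2}\cap \Cl(g)|$. For this, take any $a\in A$; since $A = A^{-1}$, we also have $a^{-1}\in A$, and since $g\in A^l$ by hypothesis, we get
\[
a g a^{-1} \;\in\; A\cdot A^l\cdot A \;=\; A^{l+2},
\]
and trivially $a g a^{-1}\in \Cl(g)$. Hence $A\cdot g \subseteq A^{l+2}\cap \Cl(g)$, which gives $|A\cdot g|\leq |A^{l+2}\cap \Cl(g)|$.

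Combining the two observations, the orbit-stabilizer inequality yields
\[
|A^2\cap C(g)| \;\geq\; \frac{|A|}{|A\cdot g|} \;\geq\; \frac{|A|}{|A^{l+2}\cap \Cl(g)|},
\]
which is the claim. There is no real obstacle here: the only step requiring a small idea is recognizing that the assumption $g\in A^l$ together with $A = A^{-1}$ is exactly what is needed to confine the conjugation orbit inside $A^{l+2}$, so that an upper bound on $|A^{l+2}\cap \Cl(g)|$ (the hypothesis we expect to use later) feeds directly into a lower bound on $|A^2\cap C(g)|$. This is precisely the transfer of information between conjugacy-class size and centralizer size that the discussion preceding the lemma advertises.
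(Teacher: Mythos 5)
Your proof is correct and is essentially the paper's own argument: apply the orbit--stabilizer inequality (\ref{eq:applepie}) to the conjugation action with $x=g$, identify the stabilizer with $C(g)$ and $A^{-1}A$ with $A^2$ via $A=A^{-1}$, and note that the $A$-orbit of $g$ is contained in $A^{l+2}\cap \Cl(g)$ since $g\in A^l$. You have merely written out in full the details the paper leaves implicit.
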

\begin{proof}
Let $G\curvearrowright G$ be the action of $G$ on itself by conjugation. Apply 
(\ref{eq:applepie}) with $x=g$; the orbit of $g$ under conjugation by $A$ 
is contained in $A^{l+2}\cap \Cl(g)$.
\end{proof}

It is instructive to see some other consequences of Lemma \ref{lem:orbsta}.
\begin{prob}
Let $G$ be a group and $H$ a subgroup thereof. Let $A\subset G$ be a 
 set with $A=A^{-1}$. Then
\begin{equation}\label{eq:vento}
|A^2 \cap H| \geq \frac{|A|}{r},\end{equation}
where $r$ is the number of cosets of $H$ intersecting $A$. 
\end{prob}
{\em Hint:} Consider the action $G\curvearrowright X=G/H$ by left multiplication, that is,
$g\mapsto (a H \mapsto g a H)$. Then apply (\ref{eq:applepie}).

\vskip 5pt
The following exercise
tells us that, if we show that the intersection of $A$ with
a subgroup $H$ grows rapidly, then we know that $A$ itself grows rapidly.
\begin{prob}\label{prob:bara}
  Let $G$ be a group and $H$ a subgroup thereof. 
Let $A\subset G$ be 
a non-empty set with $A = A^{-1}$.
 Prove that, for any $k>0$,
\begin{equation}\label{eq:avoc1}
|A^{k+1}| \geq \frac{|A^{k}\cap H|}{|A^2\cap H|} |A| .
\end{equation}
\end{prob}
    {\em Hint:} Consider the action $G\curvearrowright G/H$ again, and
    apply both (\ref{eq:easypie}) and (\ref{eq:applepie}).

\begin{prob}\label{prob:bere}
Let $G$ be a group and $H$ a subgroup thereof.  Write $\pi_{G/H}:G\to G/H$
for the quotient map.
Let $A\subseteq G$ be a non-empty set with $A = A^{-1}$. Then, for any $k>0$,
\[|A^{k+2}| \geq \frac{|\pi_{G/H}(A^k)|}{|\pi_{G/H}(A)|} |A| .\]
\end{prob}

\section{Growth in a solvable group}

\subsection{Remarks on abelian groups}
Let $G$ be an abelian group and $A$ be a finite subset of $G$. This
is the classical setup for what nowadays is called
{\em additive combinatorics} -- a field that may be said to have started
to split off from additive number theory with Roth \cite{MR0051853}
and Freiman \cite{MR0360496}.

In general, for $G$ abelian, $A\subset G$ may be such that $|A+A|$
is barely larger than $|A|$, and that is the case even if we assume
that $A$ generates $G$. For instance, take $A$ to be a segment
of an arithmetic progression: $A = \{2,5,8,\dotsc,3m-1\}$. Then
$|A| = m$ and $|A+A| = 2m-1 < 2 |A|$.

Freiman's theorem \cite{MR0360496} (generalized first to
abelian groups of bounded torsion \cite{MR1701207} and then to arbitrary
abelian groups \cite{MR2302736}) tells us that, in
a very general sense, this is the only kind of set that grows slowly.
We have to start by giving a generalization of what we just called
a segment of an arithmetic progression.

\begin{defn}
  Let $G$ be a group.
  A {\em centered convex progression} of dimension $d$
  is a set $P\subset G$ such that there exist
  \begin{enumerate}
  \item a convex subset $Q\subset \mathbb{R}^d$ that is also symmetric
    ($Q = -Q$),
  \item a homomorphism $\phi:\mathbb{Z}^d\to G$,
  \end{enumerate}
  for which $\phi(\mathbb{Z}^d\cap Q) = P$. We say $P$ is {\em proper}
  if $\phi|_{\mathbb{Z}^d\cap Q}$ is injective.
\end{defn}

\begin{prop}[Freiman; Ruzsa-Green]
  Let $G$ be an abelian group. Let $A\subset G$ be finite. Assume that
  $|A+A|\leq K |A|$ for some $K$. Then $A$ is contained in at most
  $c_{K,1}$ copies of $P+H$ for some proper, centered convex progression $P$
  of dimension $\leq c_{K,2}$ and some finite subgroup $H<G$ such that
  $|P+H| \ll e^{c_{K,2}} |A|$. Here $c_{K,1},c_{K,2}>0$ depend only on $K$.
\end{prop}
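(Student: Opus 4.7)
The plan is to follow the Bogolyubov--Ruzsa--Freiman paradigm: first, use small doubling to control iterated sumsets; second, find a large coset progression sitting inside a small sumset of $A$; third, use a covering lemma to conclude that $A$ itself is covered by boundedly many translates of (essentially) that coset progression.

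\emph{Step 1 (Plünnecke--Ruzsa).} From $|A+A|\leq K|A|$ and the abelian Plünnecke--Ruzsa inequality (mentioned earlier in the introduction, with Petridis's slick proof) one obtains $|nA-mA|\leq K^{n+m}|A|$ for all $n,m\geq 0$. This is the tool that lets us move freely between $A+A$, $2A-2A$, $A+2A-2A$, etc., with quantitative losses depending only on $K$.

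\emph{Step 2 (Bogolyubov--Ruzsa).} This is the heart of the matter and I expect it to be the main obstacle. The goal is to show that the sumset $2A-2A$ contains a set of the form $H+P$, where $H<G$ is a finite subgroup, $P$ is a proper centered convex progression of dimension $d\leq c_{K,2}$, and $|H+P|\gg_{K} |A|$. In the classical $G=\Z$ case, this is the Bogolyubov lemma, proved by Fourier analysis: one shows that the large spectrum of $1_A$ is contained in a low-dimensional subspace (Chang's theorem), reads off that $2A-2A$ contains a Bohr set of bounded rank and positive density, and then invokes Minkowski's second theorem (geometry of numbers) to extract a proper convex progression from that Bohr set. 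In an arbitrary abelian $G$, one first models $A$ inside a convenient finite abelian group (a Freiman $s$-isomorphism, $s$ absolute), does Fourier analysis on its Pontryagin dual, and then lifts back; the residual finite subgroup $H$ absorbs whatever torsion cannot be modelled by $\Z^d$. The dimension bound $d = c_{K,2}$ comes directly from Chang's theorem (polylogarithmic in $K$ with subsequent refinements).

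\emph{Step 3 (Ruzsa covering).} Once $H+P\subset 2A-2A$ is in hand, apply the Ruzsa covering lemma to the pair $(A,H+P)$: since $|A+(H+P)|\leq |3A-2A|\ll_K|A|$ and $|H+P|\gg_{K}|A|$, one can cover $A$ by at most $c_{K,1}$ translates of $(H+P)-(H+P) = H + (P-P)$. The set $P-P$ is itself a centered convex progression of dimension $d$ corresponding to the convex body $Q-Q$; if it fails to be proper, a standard argument (going back to Bilu) replaces it by a proper centered convex progression of the same dimension, losing only a constant factor in the size. We end up with $|H+P'|\ll e^{c_{K,2}}|A|$, giving exactly the required conclusion.

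The real technical obstacle is Step 2: extracting a large coset progression from $2A-2A$ with effective dependence on $K$, both for the dimension and for the density. Step 1 and Step 3 are then essentially bookkeeping on top of Plünnecke--Ruzsa and Ruzsa covering, together with the Freiman modelling lemma used to pass from $G$ to a finite model group and back.
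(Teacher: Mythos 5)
The paper does not actually prove this proposition: it is quoted as a known theorem of Freiman and Green--Ruzsa \cite{MR0360496}, \cite{MR1701207}, \cite{MR2302736}, with the quantitative refinements of Sanders \cite{Sanders} and Konyagin \cite{MR2994996} mentioned immediately afterwards. So there is no in-paper argument to compare with; what you have written is an outline of the standard route taken in those references (Pl\"unnecke--Ruzsa, then a Bogolyubov-type statement giving a large coset progression inside $2A-2A$, then Ruzsa covering), and as an outline it is structurally correct.

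As a \emph{proof}, however, it has the gap you yourself point to: Step 2 is not a reduction to standard lemmas plus bookkeeping, it \emph{is} the theorem. The statement that $2A-2A$ contains $H+P$ with $\dim P$ and $|H+P|/|A|$ controlled in terms of $K$ alone is precisely the Bogolyubov--Ruzsa lemma for arbitrary abelian groups, and proving it requires the Green--Ruzsa modelling machinery for groups with torsion (the naive ``Freiman $s$-isomorphism into a finite model, Fourier, Minkowski'' chain from the $\mathbb{Z}$ case does not transfer verbatim; the torsion part has to be built into the Bohr-set-to-coset-progression step, which is where $H$ really comes from). Unless you are permitted to cite that lemma, your write-up proves nothing beyond the reduction. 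Two smaller corrections: the polylogarithmic-in-$K$ dimension bounds are due to Sanders' almost-periodicity argument, not to Chang's theorem, which by itself yields bounds polynomial in $K$ (still sufficient for the proposition as stated, since $c_{K,1},c_{K,2}$ may depend on $K$ arbitrarily); and rendering the final convex progression proper costs a factor exponential in the dimension, not a constant --- harmless here only because the statement allows the loss $e^{c_{K,2}}$.
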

The best known bounds are essentially
those of Sanders \cite{Sanders}, as improved
by Konyagin (see \cite{MR2994996}): $c_{K,1}, c_{K_2}\ll (\log K)^{3+o(1)}$.

This is a broad field into which we will not venture further. Notice just
that, in spite of more than forty years of progress, we do not yet have
what is conjectured to be the optimal result, namely, the above with
$f(K), g(K)\ll \log K$ (the ``polynomial Freiman-Ruzsa conjecture'').
Thus the state of our knowledge here is in some sense
less satisfactory than in the case of simple groups, as will later become
clear.

The situation for nilpotent groups is much like the situation for
abelian groups: there is a generalization of the Freiman-Ruzsa theorem
to the nilpotent case, due to Tointon \cite{MR3254927} (see also
Tessera-Tointon \cite{ToinTess}), based on groundwork
laid by Fisher-Katz-Peng
\cite{MR2587441} and Tao \cite{MR2791295}.

{\em Brief excursus.} There is of course also the matter of the role of nilpotent groups in the study
of growth in a different if related sense, within geometric group theory: for 
$A$ a subset of an infinite group $G$, how does $|A^k|$ behave as $k\to\infty$?
It is easy to see that, if $G$ is nilpotent, then $|A^k|$ grows polynomially on $k$.
Gromov's theorem \cite{MR623534}, a deep and celebrated result, states
the converse: if $|A^k|$ is bounded by a polynomial on $k$, then $\langle A\rangle$
has a nilpotent subgroup of finite index. There are several clearly distinct proofs
of Gromov's theorem by now; of them, the one closest to the study of ``growth'' in the sense of the present paper is clearly \cite{MR2833482}. See \cite{BGTgen} for
further work in that direction.


\subsection{The affine group}\label{subs:affi}
\subsubsection{Growth in the affine group}
We defined the {\em affine group} $G$ over a field $K$
in (\ref{eq:boreloid2}).
(If we were to insist on using language in exactly the same way as later,
we would say that the affine group is an algebraic group $G$ (a variety
with morphisms defining the group operations) and that (\ref{eq:boreloid2})
describes the group $G(K)$ consisting of its rational points. For the sake
of simplicity, we avoid this sort of distinction here. We will go over
most of these terms once the time to use them has come.)

Consider the following subgroups of $G$:
\begin{equation}\label{eq:gutr}
U = \left\{\left(\begin{matrix} 1 & a\\0 &1\end{matrix}\right) :
a\in K\right\},\;\;\;\;\;\;
T = \left\{\left(\begin{matrix} r & 0\\0 &1\end{matrix}\right) :
r\in K^*\right\}.\end{equation}
These are simple examples of a {\em solvable} group $G$, of a maximal
{\em unipotent} subgroup $U$ and of a maximal torus $T$. 
In general, in $\SL_n$,
a maximal torus is just the group of matrices that are diagonal with
respect to some fixed basis of $\overline{K}^n$, or, what is the same,
the centralizer of any element that has $n$ distinct eigenvalues.
Here, in our group $G$, the centralizer $C(g)$ of any element $g$
of $G$ not in $U$ is a maximal torus.

When we are looking at what elements of the group $G$ do to each other
by the group operation, we are actually looking at two actions:
that of $U$ on itself (by the group operation)
and that of $T$ on $U$ (by conjugation; $U$ is a normal subgroup of $G$). They turn out to correspond to
addition and multiplication in $K$, respectively:
\[\begin{aligned}
\left(\begin{matrix} 1 & a_1\\0 &1\end{matrix}\right) \cdot
\left(\begin{matrix} 1 & a_2\\0 &1\end{matrix}\right) &=
\left(\begin{matrix} 1 & a_1+a_2\\0 &1\end{matrix}\right)\\
\left(\begin{matrix} r & 0\\0 &1\end{matrix}\right) \cdot
\left(\begin{matrix} 1 & a\\0 &1\end{matrix}\right) \cdot
\left(\begin{matrix} r^{-1} & 0\\0 &1\end{matrix}\right) 
&= \left(\begin{matrix} 1 & r a\\0 &1\end{matrix}\right) .
 \end{aligned}\]

Thus, we see that growth in $U$ under the actions of $U$ and $T$ 
is tightly linked to growth in $K$ under addition and 
multiplication. This can be seen as motivation for studying growth in
the affine group $G$. Perhaps we need no such motivation: we are studying
growth in general, through a series of examples, and the affine group
is arguably the simplest interesting example of a solvable group.

At the same time, the study of growth in a field under addition and
multiplication was historically important in the passage from the
study of problems in commutative groups (additive combinatorics) to the
study of problems in noncommutative groups by related tools. (Growth
in noncommutative groups had of course
been studied before, but from very different perspectives, e.g.,
that of geometric group theory.) Some of the ideas we are about to see
in the context of groups come ultimately from \cite{MR2053599} and
\cite{MR2359478}, which are about finite fields, not about groups.

Of course, the way we choose to develop matters emphasizes what the
approach to the affine group has in common with the approach to other, not
necessarily solvable groups. The idea of {\em pivoting} will appear again
when we study $\SL_2$.

\begin{lem}\label{lem:cano}
Let $G$ be the affine group over $\mathbb{F}_p$. Let $U$
be the maximal unipotent subgroup of $G$, and $\pi:G\to G/U$ the quotient
map.

Let $A\subset G$, $A=A^{-1}$.
Assume $A\not\subset U$; let $x$ be an element of $A$ not in $U$.
 Then 
\begin{equation}\label{eq:hastar}
|A^2 \cap U| \geq \frac{|A|}{|\pi(A)|},\;\;\;\;\;\;
|A^2 \cap T| \geq \frac{|A|}{|A^5|} |\pi(A)|
\end{equation}
for $T=C(x)$.
\end{lem}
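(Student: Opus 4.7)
The plan is to obtain both inequalities from the orbit--stabilizer machinery of Lemmas~\ref{lem:orbsta} and~\ref{lem:lawve}, together with one short combinatorial packing argument.

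For the bound on $|A^2 \cap U|$, I would apply (\ref{eq:applepie}) to the action of $G$ on the quotient $G/U$ by left multiplication. The stabilizer of the trivial coset $eU$ is exactly $U$, and the orbit of $eU$ under $A$ has size $|\pi(A)|$. Thus (\ref{eq:applepie}) yields $|A^{-1} A \cap U| \geq |A|/|\pi(A)|$, which, because $A = A^{-1}$, is exactly the first inequality.

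For the bound on $|A^2 \cap T|$, apply Lemma~\ref{lem:lawve} with $g = x$ and $l = 1$ to get
\[|A^2 \cap T| = |A^2 \cap C(x)| \geq \frac{|A|}{|A^3 \cap \Cl(x)|},\]
so it suffices to show $|A^3 \cap \Cl(x)| \leq |A^5|/|\pi(A)|$. The key observation is that $G/U$ is abelian (in the affine group it is isomorphic to $K^*$), hence conjugation acts trivially on the quotient and $\Cl(x) \subseteq \pi^{-1}(\pi(x))$, a single coset of $U$. For each $s \in \pi(A)$ pick an $h_s \in A$ with $\pi(h_s) = s$; since $h_s^{-1} \in A^{-1} = A$, the set $h_s^{-1}(A^3 \cap \Cl(x))$ sits inside $A \cdot A^3 = A^4$ and inside the coset $\pi^{-1}(s^{-1}\pi(x))$. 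These cosets are distinct as $s$ ranges over $\pi(A)$, so the sets are pairwise disjoint, and summing sizes gives $|\pi(A)| \cdot |A^3 \cap \Cl(x)| \leq |A^4|$. Finally $|A^4| \leq |A^5|$, since right multiplication by any fixed $a \in A$ injects $A^4$ into $A^5$; this yields the required bound.

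The only step requiring any imagination is this last packing argument: noticing that the abelianness of $G/U$ confines the whole conjugacy class $\Cl(x)$ to a single $U$-coset, which lets us translate $A^3 \cap \Cl(x)$ by representatives $h_s^{-1}$ of the $|\pi(A)|$ different $U$-cosets meeting $A$, producing that many disjoint copies inside $A^4$. Everything else is a direct invocation of the orbit--stabilizer lemmas of the previous section.
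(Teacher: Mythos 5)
Your proof is correct and follows essentially the same route as the paper: the first inequality via the orbit--stabilizer bound (\ref{eq:applepie}) for the left-multiplication action on $G/U$, and the second via orbit--stabilizer for the conjugation action (Lemma \ref{lem:lawve}) combined with the fact that conjugation is trivial modulo $U$, i.e., that $\Cl(x)$ stays in a single $U$-coset (equivalently, commutators lie in $U$). Your packing argument with the representatives $h_s$ is just an inline re-derivation of the step the paper handles by citing (\ref{eq:easypie}) for the action on $G/U$, and it even yields the marginally stronger intermediate bound $|A^2\cap T|\geq |A|\,|\pi(A)|/|A^4|$ before you relax $|A^4|$ to $|A^5|$.
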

Recall $U$ is given by (\ref{eq:gutr}). Since $x\not\in U$, its centralizer
$T=C(x)$ is a maximal torus.
\begin{proof}
By (\ref{eq:vento}),
$A_u:= A^2 \cap U$ has at least $|A|/|\pi(A)|$ elements.
Consider the action of $G$ on itself by conjugation. Then, by Lemma 
\ref{lem:orbsta}, $|A^2\cap T|\geq |A|/|A(x)|$. 
(Here $A(x)$ is the orbit of $x$ under the action of $A$ by conjugation,
and $\Stab(x) = C(g) = T$ is the stabilizer of $g$ under conjugation.) We set
$A_t := A^2\cap T$. Clearly, 
$|A (x)| = |A(x) x^{-1}|$. Since the derived group of $G$ is $U$
(meaning, in particular, that $a x a^{-1} x^{-1}\in U$ for any $a$ and $x$),
we see that $A(x) x^{-1} \subset A^4\cap U$, and so
$|A(x)|\leq |A^4 \cap U|$.  At the same time, by (\ref{eq:easypie})
applied to the action $G\curvearrowright G/U$ by left multiplication, 
$|A^5| = |A^4 A|\geq |A^4 \cap U|\cdot |\pi(A)|$. Hence 
\[|A_t| \geq \frac{|A|}{|A^4\cap U|} \geq \frac{|A|}{|A^5|} |\pi(A)|.\] 
\end{proof}

The proof of the following proposition will proceed essentially by induction.
This may be a little unexpected, since we are in a group $G$, not in,
say, $\mathbb{Z}$, which has a natural ordering. However, as the proof will
make clear, one can do induction on a group with a finite set of generators,
even in the absence of an ordering.

\begin{prop}\label{prop:jutor}
Let $G$ be the affine group over $\mathbb{F}_p$, $U$ the maximal unipotent
subgroup of $G$, and $T$ a maximal torus. Let $A_u\subset U$, $A_t\subset T$. 
Assume
$A_u = A_u^{-1}$, $e\in A_t, A_u$ and $A_u\ne \{e\}$. Then
\begin{equation}\label{eq:jces}|(A^2_t(A_u))^6|
\geq \min(|A_u| |A_t|,p).\end{equation}
\end{prop}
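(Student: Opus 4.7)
Plan. Via the isomorphisms $U\cong(\F_p,+)$, $\begin{pmatrix}1&a\\0&1\end{pmatrix}\leftrightarrow a$, and $T\cong(\F_p^\times,\cdot)$, $\begin{pmatrix}r&0\\0&1\end{pmatrix}\leftrightarrow r$, conjugation of $T$ on $U$ becomes ordinary multiplication in $\F_p$. Hence $A_t^2(A_u)$ corresponds to the product set $B:=A_tA_tA_u\subset\F_p$ and $(A_t^2(A_u))^6$ to the $6$-fold iterated sumset $6B=B+B+\cdots+B$. The hypotheses translate to $1\in A_t$, $0\in A_u$, $A_u=-A_u$ and $A_u\supsetneq\{0\}$; the goal becomes $|6B|\geq\min(|A_t||A_u|,p)$.

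Since $1\in A_t$ we have $A_u\subset B$ and $A_tv\subset B$ for every $v\in A_u$, so $|B|\geq\max(|A_t|,|A_u|)$. If $|B|\geq\lceil(p+5)/6\rceil$, iterated Cauchy--Davenport yields $|6B|=p$, which already suffices. So I may assume $|A_t|,|A_u|<\lceil(p+5)/6\rceil$ and, for contradiction, that $|6B|<\min(|A_t||A_u|,p)$.

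The core step is an Elekes-style pivoting. Pick two distinct elements $v,v'\in A_u$ — possible because $A_u=-A_u$ together with $A_u\supsetneq\{0\}$ forces $|A_u|\geq 3$ when $p$ is odd, while the case $p=2$ reduces immediately to $A_u=\F_2\subset B$, giving $|6B|=p$. The map
\[
\phi\colon A_t\times A_u\longrightarrow\F_p\times\F_p,\qquad (t,u)\longmapsto(tv+u,\,tv'+u),
\]
is injective, since $t=(x-y)/(v-v')$ and $u=x-tv$ recover $(t,u)$ from the image $(x,y)$. Each coordinate of $\phi(t,u)$ lies in $A_tv+A_u\subset B+B\subset 6B$, so
\[
|A_t||A_u|=|\mathrm{Im}\,\phi|\leq|6B|^{2},
\]
giving the preliminary bound $|6B|\geq\sqrt{|A_t||A_u|}$.

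To bootstrap the square-root bound to the full $|A_t||A_u|$, one exploits the \emph{second} factor of $A_t$ in $B$: for every $s\in A_t$, the set $sA_t\cdot v\subset A_tA_tA_u=B$, so replacing the pivots $v,v'$ by $sv,sv'$ produces, for each $s\in A_t$, a fresh family of $|A_t||A_u|$ points in $A_t(sv)+A_u\subset B+B$. A Ruzsa-triangle-type manipulation (in the spirit of Lemma \ref{lem:schatte}) combining these $|A_t|$ families --- precisely the ``pivoting'' trick from the sum-product theorem of Bourgain--Glibichuk--Konyagin \cite{MR2053599} and Bourgain--Katz--Tao \cite{MR2225493} transcribed into the multiplicative/additive language of the affine group --- amplifies the square-root bound to $|6B|\geq\min(|A_t||A_u|,p)$. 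The main obstacle is exactly this amplification: it must invoke the absence of proper subfields of $\F_p$, for every time the pivoting produces ``too few'' new elements, the additive structure of $A_u$ and the multiplicative structure of $A_t$ are forced to align, and over $\F_p$ this can only happen at the scale of the whole field, closing the argument at $|6B|=p$.
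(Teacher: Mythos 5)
There is a genuine gap, and it sits exactly where the whole content of the proposition lies. Your setup (translating $U\cong(\F_p,+)$, $T\cong(\F_p^\times,\cdot)$, $A_t^2(A_u)\leftrightarrow B=A_tA_tA_u$, and the Cauchy--Davenport reduction) is fine, and the Elekes-style injection $(t,u)\mapsto(tv+u,tv'+u)$ is correct --- but it only yields $|6B|\geq\sqrt{|A_t||A_u|}$. The ``bootstrap'' to $\min(|A_t||A_u|,p)$ is asserted, not proved, and the specific device you propose cannot deliver it: for each $s\in A_t$ the injection with pivot pair $(sv,sv')$ lands in the \emph{same} set $(B+B)\times(B+B)$, so even if you take the union over all $s$ (the images overlap exactly according to the value of $ts$), the best you can extract is $|A_tA_t|\,|A_u|\leq|6B|^2$, which is still a square-root-type bound. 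Invoking ``a Ruzsa-triangle-type manipulation'' and ``the pivoting trick from the sum-product theorem'' names the missing idea without supplying it; also, the relevant structural fact over $\F_p$ is not the absence of proper subfields but that $(\F_p,+)$ has no proper nontrivial subgroups, i.e.\ any nonzero element of $A_u$ generates $U$.

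The actual mechanism (as in the paper's proof) is structurally different from a two-coordinate Elekes count. One defines a pivot to be an $a\in U$ such that the \emph{one-coordinate} map $\phi_a(u,t)=u+ta$ is injective on $A_u\times A_t$; injectivity into $\F_p$ itself is what gives the full product $|A_u||A_t|$ rather than its square root. Then one argues by cases: if some $a\in A_u$ is a pivot, the bound is immediate; if \emph{no} $a\in U$ is a pivot, a collision count (two distinct pairs collide for at most one $a\neq 0$) plus Cauchy--Schwarz produces an $a$ with $|\phi_a(A_u,A_t)|\geq p-1$, and an ``unfolding'' map $\psi_{t_1,t_2}(u)=(t_1-t_2)u$, injective because $t_1\neq t_2$, transfers this to $(A_t(A_u))^4$ and then to all of $U$ after one more multiplication by $A_u$; in the mixed case one uses that $A_u\neq\{e\}$ generates $U$ to find a non-pivot $a$ and $g\in A_u$ with $ga$ a pivot, and unfolds again, landing inside $(A_t^2(A_u))^6$. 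None of this case analysis, collision counting, or unfolding appears in your proposal, so the central amplification step remains unestablished.
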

To be clear: here
\[A_t^2(A_u) = 
\{t_1(u_1): t_1\in A_t^2, u_1\in A_u\},\]
where $t(u) = t u t^{-1}$, since $T$ acts on $U$ by conjugation.
\begin{proof}
Call $a\in U$ a {\em pivot}
if the function $\phi_{a}:A_u \times A_t \to U$ given by
\[(u,t)\mapsto u t(a) =  u t a t^{-1}\]
is injective. 

{\em Case (a): There is a pivot $a$ in $A_u$.} 
Then
$|\phi_a(A_u,A_t)| = |A_u| |A_t|$, and so 
\[|A_u A_t(a)| \geq |\phi_a(A_u,A_t)| = |A_u| |A_t| .\]
This is the motivation for the name ``pivot'': the element $a$ is the pivot
on which we build an injection $\phi_a$, giving us the growth we want.

{\em Case (b): There are no pivots in $U$.} 
As we are about to see, 
this case can arise only if either $A_u$ or $A_t$ is large with respect to $p$.
Say that $(u_1,t_1), (u_2,t_2)\in A_u\times A_t$ {\em collide} for $a\in U$ if
$\phi_a(u_1,t_1) = \phi_a(u_2,t_2)$. Saying that there are no pivots in $U$
is the same as saying that, for every $a\in U$, there are at least two distinct
$(u_1,t_1), (u_2,t_2)\in A_u\times A_t$ that collide for $a$. Now, two distinct
$(u_1,t_1)$, $(u_2,t_2)$ can collide for at most one $a\in U\setminus \{e\}$.
(As one can easily see, such an $a$ corresponds to a solution
to a non-trivial linear equation, which can have at most one solution.)
Hence, if there are no pivots, $|A_u|^2 |A_t|^2 \geq |U\setminus \{e\}| = p-1$,
i.e.,
$|A_u|\cdot |A_t|$ is large ($\geq \sqrt{p-1}$).
This fact already hints that this case will not be hard.

Let $\kappa_a$ denote the number of collisions for a given $a\in U$:
\[\kappa_a = |\{u_1,u_2\in A_u, t_1,t_2\in A_t: \phi_a(u_1,t_1) = 
\phi_a(u_2,t_2)\}|.\]
As we were saying, 
two distinct $(u_1,t_1)$, $(u_2,t_2)$ collide for at most
one $a\in U\setminus \{e\}$. Hence
the total number of collisions $\sum_{a\in U\setminus \{e\}} \kappa_a$
is $\leq |A_u|^2 |A_t|^2$, and so there is an $a\in U\setminus \{e\}$ such that
\[\kappa_a \leq \frac{|A_u|^2 |A_t|^2}{p-1} .\]
Now,
\[\begin{aligned}(|A_u| |A_t|)^2 &=
\left(\sum_{x\in \phi_a(A_u,A_t)} |\{(u,t)\in A_u\times A_t : \phi_a(u,t)=x\}|\right)^2 \\ &\leq |\phi_a(A_u,A_t)|
\sum_{x\in \phi_a(A_u,A_t)} |\{(u,t)\in A_u\times A_t : \phi_a(u,t)=x\}|^2
\\ &= |\phi_a(A_u,A_t)|\cdot \kappa_a,\end{aligned}\]
where the inequality is just Cauchy-Schwarz.
Thus, $|\phi_a(A_u,A_t)|\geq |A_u|^2 |A_t|^2/\kappa_a$,
and so
\[|\phi_a(A_u,A_t)|\geq \frac{|A_u|^2 |A_t|^2 }{
\frac{|A_u|^2 |A_t|^2}{p-1}}
= p-1.\]

We are not quite done, since $a$ may not be in $A$.
Since $a$ is {\em not} a pivot (as there are none), there exist distinct
$(u_1,t_1)$, $(u_2,t_2)$ such that $\phi_a(u_1,t_1)=\phi_a(u_2,t_2)$. Then
$t_1\ne t_2$ (why?), and so the map $\psi_{t_1,t_2}:U\to U$ given by
$u\mapsto t_1(u) (t_2(u))^{-1}$ is injective. The idea is that the very
non-injectivity of $\phi_a$ gives an implicit definition of it, much like
a line that passes through two distinct points is defined by them.

What follows may be thought
of as the ``unfolding'' step, in that we wish to remove an element $a$
from an expression, and we do so by applying to the expression a map that will
send $a$ to something known. We will be using the commutativity of $T$ here.

For any $u\in U$, $t\in T$, since $T$ is abelian,
\begin{equation}\label{eq:unfold}\begin{aligned}
\psi_{t_1,t_2}(\phi_a(u,t)) &= t_1(u t(a)) (t_2(u t(a)))^{-1} =
t_1(u) t(t_1(a) (t_2(a))^{-1}) (t_2(u))^{-1}\\
&= t_1(u) t(\psi_{t_1,t_2}(a)) (t_2(u))^{-1} = t_1(u) t(u_1^{-1} u_2) (t_2(u))^{-1}
,\end{aligned}\end{equation}
where $\psi_{t_1,t_2}(a) = u_1^{-1} u_2$ holds because $\phi_a(u_1,t_1) =
\phi_a(u_2,t_2)$. Note that $a$ has 
disappeared from the last expression in (\ref{eq:unfold}). We obtain 
\[\psi_{t_1,t_2}(\phi_a(A_u,A_t))\subset A_t(A_u) A_t(A_u^2) A_t(A_u)
\subset (A_t(A_u))^4.\] Since
$\psi_{t_1,t_2}$ is injective, we conclude that
\[|(A_t(A_u))^4| \geq |\psi_{t_1,t_2}(\phi_a(A_u,A_t))| = 
|\phi_a(A_u,A_t)| \geq p-1,\]
that is to say, at most a single element of $U$ is missing from
$(A_t(A_u))^4$. Since $A_u$ contains at least one element besides $e$,
we obtain immediately that
\[(A_t(A_u))^6 \supset (A_t(A_u))^4 A_u = U.\]

There is an idea here that we are about to see again: any element $a$ that 
is not a pivot can, by this very fact, 
be given in terms of some $u_1, u_2\in A_u$, 
$t_1, t_2\in A_t$, and so an expression involving $a$ can often be transformed
into one involving only elements of $A_u$ and $A_t$.

{\em Case (c): There are pivots and non-pivots in $U$.} 
Here comes what we can think of as the inductive step.
Since $A_u \ne \{e\}$, $A_u$ generates
$U$. Thus, there is a non-pivot $a\in U$ and a $g\in A_u$ such that
$g a$ is a pivot. Then $\phi_{ag}:A_u\times A_t\to U$ is injective. Much as 
in (\ref{eq:unfold}), we unfold:
\begin{equation}\begin{aligned}
\psi_{t_1,t_2}(\phi_{g a}(u,t)) &= t_1(u t(g) t(a))
(t_2(u t(g) t(a)))^{-1} \\
&= 
t_1(u t(g)) t(u_1^{-1} u_2) (t_2(u t(g)))^{-1} ,
\end{aligned}\end{equation}
where $(u_1,t_1)$, $(u_2,t_2)$ are distinct pairs such that 
$\phi_a(u_1,t_1) = \phi_a(u_2,t_2)$. Just as before,
$\psi_{t_1,t_2}$ is injective.
Hence
\[|A_t(A_u) A_t^2(A_u) A_t(A_u^2) A_t^2(A_u) A_t(A_u)| \geq 
|\psi_{t_1,t_2}(\phi_{g a}(u,t))| = |A_u| |A_t|\]
and we are done.

The idea to recall here is that, if $S$ is a subset of an orbit $\mathscr{O} = \langle A\rangle x$ such that $S\ne \emptyset$ and $S\ne \mathscr{O}$, then
there is an $s\in S$ and a $g\in A$ such that $g s\not\in S$. 
It is in this fashion that we
can use induction even in the absence of a natural ordering of $\langle A\rangle$.
\end{proof} 
 We are using the fact that $G$ is the affine group over $\mathbb{F}_p$
(and not over some other field) only at the beginning of case (c), when we
say that, for $A_u\subset U$, $A_u \ne \{e\}$ implies 
$\langle A_u\rangle = U$.

\begin{prop}\label{prop:sabat}
Let $G$ be the affine group over $\mathbb{F}_p$.
Let $U$
be the maximal unipotent subgroup of $G$, and $\pi:G\to G/U$ the quotient
map.

Let $A\subset G$, $A=A^{-1}$, $e\in A$. Assume $A$ is not contained in any
maximal torus.
Then either
\begin{equation}\label{eq:jomar}
|A^{73}| \geq \sqrt{|\pi(A)|} \cdot |A|
\end{equation}
or 
\begin{equation}
U \subset A^{72}
.\end{equation}
\end{prop}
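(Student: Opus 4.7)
The plan is to apply Lemma \ref{lem:cano} to extract sets $A_u\subset A^2\cap U$ and $A_t\subset A^2\cap T$ of controlled size, feed a slightly enlarged $A_u$ together with $A_t$ into Proposition \ref{prop:jutor}, and translate the resulting large subset of $U$ into a lower bound on $|A^{73}|$ via a coset-decomposition argument.

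If $A\subset U$, then $\pi(A)=\{e\}$, and the desired bound $|A^{73}|\geq|A|=\sqrt{|\pi(A)|}\,|A|$ is trivial. So assume $A\not\subset U$; fix $x\in A\setminus U$ and set $T=C(x)$, a maximal torus. I also claim $A$ is non-commutative: if it were abelian, $\langle A\rangle$ would lie in $C(x)=T$, contradicting the hypothesis that $A$ is contained in no maximal torus. Hence there exist $a_1,a_2\in A$ with $[a_1,a_2]\neq e$; since $[G,G]=U$, this commutator lies in $(A^4\cap U)\setminus\{e\}$.

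Set $A_u=A^4\cap U$ and $A_t=A^2\cap T$; both are symmetric, contain $e$, and $A_u\neq\{e\}$ by what was just shown. Lemma \ref{lem:cano} gives $|A^2\cap U|\geq|A|/|\pi(A)|$ and $|A_t|\geq|A|\,|\pi(A)|/|A^5|$, so, since $A_u\supset A^2\cap U$,
\[|A_u|\,|A_t|\;\geq\;\frac{|A|^2}{|A^5|}.\]
Proposition \ref{prop:jutor} applied to $A_u$, $A_t$ then produces $B:=A_t^2(A_u)\subset U$ with $|B^6|\geq\min(|A_u|\,|A_t|,p)$. Because $A_t\subset A^2$, $A_u\subset A^4$, and $A_t=A_t^{-1}$ (so $A_t^{-2}\subset A^4$ as well), we have $B\subset A^4\cdot A^4\cdot A^4=A^{12}$, hence $B^6\subset A^{72}$.

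If $|A_u|\,|A_t|\geq p$, then $B^6=U$, giving $U\subset A^{72}$, the second alternative. Otherwise $|B^6|\geq|A|^2/|A^5|$, and, since $B^6\subset U$, the elementary inequality $|AB^6|\geq|\pi(A)|\,|B^6|$ (obtained by partitioning $A$ by $U$-cosets and noting that each nonempty slice contributes a translate of $B^6$) yields
\[|A^{73}|\;\geq\;|AB^6|\;\geq\;\frac{|\pi(A)|\,|A|^2}{|A^5|}.\]
Combining with the trivial $|A^{73}|\geq|A^5|$ (which holds since $e\in A$) by multiplication gives $|A^{73}|^2\geq|\pi(A)|\,|A|^2$, i.e.\ $|A^{73}|\geq\sqrt{|\pi(A)|}\,|A|$, the first alternative. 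The only subtle point is ensuring $A_u\neq\{e\}$: the direct bound $|A^2\cap U|\geq|A|/|\pi(A)|$ only forces this when $|\pi(A)|<|A|$, so the non-commutativity argument is essential in the remaining case. Conveniently, the enlargement $A_u=A^4\cap U$ that this forces is exactly what lands $B^6$ in $A^{72}$ (rather than some smaller power), matching the constant in the statement.
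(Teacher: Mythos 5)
Your proof is correct and follows essentially the same route as the paper's: extract $A_u=A^4\cap U$ and $A_t=A^2\cap T$ via Lemma \ref{lem:cano} (with the same commutator argument guaranteeing $A_u\neq\{e\}$), apply Proposition \ref{prop:jutor}, and convert the large subset of $U$ into growth via the coset count, i.e.\ inequality (\ref{eq:easypie}). The only cosmetic difference is at the end, where you multiply the bounds $|A^{73}|\geq |\pi(A)|\,|A|^2/|A^5|$ and $|A^{73}|\geq |A^5|$ rather than splitting into cases according to the size of $|A^5|$, a slight streamlining of the paper's argument.
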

The exponents $72$, $73$ in (\ref{eq:jomar}) are not optimal. For instance,
one can obtain $52$, $53$ by looking closer at the proof of Prop.~\ref{prop:jutor}.
\begin{proof}
We can assume $A\not\subset U$, as otherwise what we are trying to prove
is trivial. Let $g$ be an element of $A$ not in $U$; its centralizer $C(g)$
is a maximal torus $T$. By assumption, there is an element $h$ of $A$ 
not in $T$. Then $h g h^{-1} g^{-1}\ne e$. At the same time, $h g h^{-1} g^{-1}$
does lie in
$A^4\cap U$, and so $A^4\cap U$ is not $\{e\}$.

Let $A_u = A^4 \cap U$, $A_t=A^2\cap T$; their size is bounded 
from below by (\ref{eq:hastar}). 
Applying Prop.~\ref{prop:jutor}, we obtain
\[|A^{72}\cap U| \geq \min(|A_u| |A_t|,p)  \geq
\min\left(\frac{|A|^2}{|A^5|},p\right).\]
By (\ref{eq:easypie}), $|A^{73}|\geq |A^{72}\cap U|\cdot |\pi(A)|$. Clearly,
if $|A|/|A^5| < 1/\sqrt{|\pi(A)|}$, then $|A^{57}| \geq |A^5| > \sqrt{|\pi(A)|}
\cdot |A|$. If $|A|/|A^5| \geq 1/\sqrt{|\pi(A)|}$, then either
$|A^{72}\cap U| \geq |A|/\sqrt{|\pi(A)|}$ and so
$|A^{73}|\geq \sqrt{|\pi(A)|}\cdot |A|$, or $|A^{72}\cap U| = p$ and
so $U\subset A^{72}$.
\end{proof}
For $A\subset U$, getting a better-than-trivial lower bound on $|A^{k}|$,
$k$ a constant, amounts to
Freiman's theorem in $\mathbb{F}_p$, and getting a growth
factor of the form $|\pi(A)|^{\delta}$, $\delta>0$, would involve
proving a version of Freiman's theorem of polynomial strength. As we discussed
before, that is a difficult open problem.

\subsubsection{Brief remarks on a generalization and an application}

We can see Prop.\ \ref{prop:sabat} as a very simple result of the
``classification of approximate subgroups'' kind.
If a set $A$ (with $A=A^{-1}$, $e\in A$)
in the affine group over $\mathbb{F}_p$ grows slowly
($|A^k| \leq |A|^{1+\delta}$, $k=73$, $\delta$ small) then either
(i) $A$ is contained in a maximal torus,
(ii) $A$ is contained in a few cosets of the maximal unipotent subgroup $U$
(that is, $|\pi(A)|\leq |A|^{2\delta}$), or
(iii)
  $A^k$ contains a subgroup (namely, $U$) such that
  $\langle A\rangle/H$ is nilpotent (here, in fact, abelian).

\begin{prob}
  Give examples of subsets $A$ of the affine group over $\mathbb{F}_p$ that
  fail to grow for each of the reasons above: a set contained in a maximal
  torus, a set almost contained in $U$, and a set containing $U$.
\end{prob}

The following more general statement has been proved for $K=\mathbb{F}_p$
\cite{GH1}. (It remains open for general finite $K$.)
Let $A\subset G = \GL_n(K)$ ($A=A^{-1}$, $e\in A$)
be such that $\langle A\rangle$ is solvable.
Then, for any $\delta>0$, if $|A^3|<|A|^{1+\delta}$, there are a
subgroup $S\triangleleft \langle A\rangle$ and a unipotent subgroup
$U\triangleleft S$ such that (a) $S/U$ is nilpotent,
(b) $U\subset A^k$, where $k=O_n(1)$, (c) $A$ is contained in
$|A|^{O_n(\delta)}$ cosets of $U$. 

\begin{prob}
  Verify that each of the cases (i)-(iii) enumerated above in the case
  of the affine group satisfies this description, i.e., there are $S$ and $U$
  such that (a)--(c) are fulfilled.
\end{prob}

What is also interesting is that the results we have proved
on growth in the affine linear
group can be interpreted as a {\em sum-product theorem}.

\begin{prob}\label{prop:dudor}
  Let $X\subset \mathbb{F}_p$, $Y\subset \mathbb{F}_p^*$ be given with
  $X = -X$, $0\in X$, $1\in Y$. Using Prop.~\ref{prop:jutor}, show that
  \begin{equation}\label{eq:huru}|6 Y^2 X|\geq
 \min(|X| |Y|,p-1).\end{equation}
\end{prob}
This is almost exactly \cite{MR2359478}, Corollary 3.5], say.

  Using (\ref{eq:huru}), or any estimate like it, one can prove the following.
\begin{theorem}[Sum-product theorem \cite{MR2053599},
\cite{MR2225493}; see also \cite{MR1948103}]\label{thm:orb}
For any $A\subset \mathbb{F}_p^*$
with $|A| \leq p^{1 - \epsilon}$, $\epsilon>0$, we have
\[\max(|A\cdot A|,|A+A|) \geq |A|^{1 + \delta},\]
where $\delta>0$ depends only on $\epsilon$.
\end{theorem}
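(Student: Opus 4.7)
The plan is to prove the contrapositive: assume $|A+A|\leq K|A|$ and $|A\cdot A|\leq K|A|$ with $K=|A|^\delta$, and derive a lower bound $\delta\geq \delta_0(\epsilon)>0$.

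First I would normalize by replacing $A$ with $a_0^{-1}A$ for any chosen $a_0\in A$; this preserves both $|A+A|/|A|$ and $|A\cdot A|/|A|$, so without loss of generality $1\in A$. Next, apply (\ref{eq:huru}) with $Y:=A$ (so $1\in Y$, $|Y|=|A|$) and $X:=A-A$ (so $X=-X$, $0\in X$, and $|X|\geq|A|$ since $A-a_0\subset X$). The estimate yields
\[
|6 Y^2 X| \;=\; |6\,(A\cdot A)\cdot (A-A)| \;=\; |6\,(A\cdot A\cdot A)-6\,(A\cdot A\cdot A)| \;\geq\; \min(|A|^2,\,p-1).
\]

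The crux is to bound $|6\,(A\cdot A\cdot A)-6\,(A\cdot A\cdot A)|$ from above by $K^{O(1)}|A|$ (or at least by $K^{O(1)}|A|^{2-\eta}$ for some fixed $\eta>0$). I would attack this by combining: (a) the multiplicative Pl\"unnecke--Ruzsa inequality in the abelian group $(\mathbb{F}_p^*,\cdot)$, giving $|A\cdot A\cdots A|\leq K^{n-1}|A|$ for the $n$-fold product; (b) the additive Pl\"unnecke--Ruzsa inequality in $(\mathbb{F}_p,+)$, giving $|nA-mA|\leq K^{n+m}|A|$; and (c) iterated applications of Ruzsa's triangle inequality (Lemma~\ref{lem:schatte}) in both groups, used to telescope mixed sum-product expressions built from $A$. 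Schematically, one clears multiplicative denominators so that numerators lie in an additively-structured subset of $\mathbb{F}_p$, and then applies additive Pl\"unnecke--Ruzsa to what remains; passing to a large subset of $A$ with better Freiman-type structure (e.g.\ via Balog--Szemer\'edi--Gowers) may be required to make the argument go through. This is the main obstacle: Pl\"unnecke--Ruzsa separately controls only additive or multiplicative structure, and the quantity $|A\cdot A\cdot A+A\cdot A\cdot A|$ is \emph{not} automatically bounded by $K^{O(1)}|A|$ for a generic set $A$. Its control in this setting is morally the sum-product phenomenon itself, so the hypothesis that both doublings are small must be used in a genuinely intertwined way, not merely through two independent Pl\"unnecke inequalities.

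Finally, comparing the two bounds yields $K^{O(1)}|A|\geq \min(|A|^2,\,p-1)$. If $|A|\leq\sqrt{p-1}$, the minimum is $|A|^2$, so $K\geq|A|^{1/O(1)}$. Otherwise the minimum is $p-1$, and using $|A|\leq p^{1-\epsilon}$ one has $p\geq|A|^{1/(1-\epsilon)}$, hence $K^{O(1)}\geq p/|A|\geq|A|^{\epsilon/(1-\epsilon)}$, so $K\geq|A|^{\epsilon/(O(1)(1-\epsilon))}$. In either case $K\geq|A|^{\delta_0(\epsilon)}$ for some $\delta_0(\epsilon)>0$, which contradicts $K=|A|^\delta$ once $\delta<\delta_0(\epsilon)$; the theorem follows.
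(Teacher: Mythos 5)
Your framework (contrapositive, normalizing so $1\in A$, taking $Y=a_0^{-1}A$ and $X=A-A$ in (\ref{eq:huru}), and the two-case comparison at the end) is a reasonable skeleton, but the decisive step is missing, and you say so yourself: everything hinges on the upper bound $|6(A\cdot A\cdot A)-6(A\cdot A\cdot A)|\leq K^{O(1)}|A|$, and none of the tools you invoke can deliver it. Pl\"unnecke--Ruzsa and the Ruzsa triangle inequality (Lemma \ref{lem:schatte}) each live inside a \emph{single} abelian group law: the additive hypothesis controls $|nA-mA|$, the multiplicative one controls iterated product sets such as $|A\cdot A\cdot A|\leq K^{2}|A|$, but knowing that $A\cdot A\cdot A$ has size $\approx K^{O(1)}|A|$ says nothing about its \emph{additive} doubling --- a set of that size can have additive doubling as large as its cardinality. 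Controlling mixed expressions like $k(AAA)-k(AAA)$ under the two doubling hypotheses is precisely the content of the sum--product phenomenon; in \cite{MR2053599} (and in the treatments descending from it) this is achieved by a genuinely intertwined argument --- passing to a $K^{-O(1)}$-dense subset and running Balog--Szemer\'edi--Gowers-type and covering arguments to build an ``approximate subring'' --- which occupies most of the proof. So, as written, your argument reduces Theorem \ref{thm:orb} to a statement at least as hard as the theorem itself; ``BSG may be required to make the argument go through'' is a placeholder, not a proof. Note also that your fallback target $K^{O(1)}|A|^{2-\eta}$ for a fixed small $\eta>0$ would not suffice in the regime $\min(|A|^2,p-1)=p-1$ when $\epsilon$ is small: with only $|A|\leq p^{1-\epsilon}$ available, $p-1\leq K^{O(1)}|A|^{2-\eta}$ yields a positive power of $p$ for $K$ only if $(2-\eta)(1-\epsilon)<1$.

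Two smaller remarks. First, the displayed equality $6Y^2X=6(AAA)-6(AAA)$ should be a containment $6Y^2X\subseteq 6(AAA)-6(AAA)$; this is harmless, since a containment is all you need to transfer the lower bound from (\ref{eq:huru}). Second, be aware that the paper itself does not prove Theorem \ref{thm:orb}: it only records that an estimate such as (\ref{eq:huru}) (Exercise \ref{prop:dudor}, via the pivoting argument of Prop.~\ref{prop:jutor}) is the key expansion ingredient, and refers to \cite{MR2053599}, \cite{MR2225493} for the actual derivation. If you want a complete argument in the spirit of these notes, the honest route is either to work through the Katz--Tao reduction from \cite{MR2053599}, or to run the pivoting argument directly on sets extracted from the hypotheses (as in \cite{MR2225493}), rather than trying to bound the mixed set by concatenating one-operation inequalities.
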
 
In fact, the proof we have given of Prop.~\ref{prop:jutor} takes its ideas
from proofs of the sum-product theorem. In particular, the idea of
{\em pivoting} is already present in them. We will later see how to
apply it in a broader context.

\subsubsection{Diameter bounds in a remaining case}

We have proved that growth occurs in $\SL_2$ under some weak
conditions. This leaves open the question of what happens with $A^k$,
$k$ unbounded, for $A$ not obeying those conditions. In particular:
what happens when $A$, while not contained in the maximal unipotent
group $U$, is contained in the union of few cosets of $U$?

One thing that is certainly relevant here is that, in general,
there is no  vertex
  expansion in the affine group, and thus no expansion. Indeed,
the purpose of this subsection is to give a glimpse of the issue of diameter
bounds in situations in which neither expansion nor rapid growth hold.

Let us state the lack of vertex expansion in elementary terms.
\begin{prop}\label{prop:garn}
  For any $\lambda_1,\dotsc,\lambda_k\in \mathbb{Z}$, and any
  $\epsilon>0$ , there is a constant $C$ depending on $\epsilon$
  such that, for every
  prime $p>C$, there is a set $S\subset \mathbb{F}_p$,
  $0<|S|\leq p/2$, such that
  \begin{equation}\label{eq:kuklo}
    |S\cup (S+1) \cup \lambda_1 S \cup \dotsc \cup \lambda_k S|\leq
  (1+\epsilon) |S|.\end{equation}
\end{prop}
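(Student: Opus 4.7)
The plan is to take $S$ to be a Bohr set $B(\Gamma, \delta) := \{x \in \mathbb{F}_p : \|\gamma x/p\| \leq \delta\ \text{for all}\ \gamma \in \Gamma\}$ (with $\|\cdot\|$ the distance to $\mathbb{Z}$), with $\Gamma$ chosen so that it is simultaneously (i) made of small integers---giving approximate invariance under $x \mapsto x+1$ via the boundary bound $|B \triangle (B+1)| \ll |B|\sum_\gamma |\gamma|/(\delta p)$---and (ii) almost closed under multiplication by each $\lambda_i^{-1}$, so that $\lambda_i B = B(\lambda_i^{-1}\Gamma,\delta)$ is close to $B$. One may think of the construction as producing an approximate F\o lner set for the solvable Schreier action on $\mathbb{F}_p$ of the subgroup of the affine group generated by $(1,1)$ and the $(\lambda_i,0)$.

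Set $M := \lceil 4k/\epsilon \rceil$, $\Lambda := \max_j|\lambda_j|$, and
\[ \Gamma := \{\lambda_1^{e_1} \cdots \lambda_k^{e_k} : 0 \leq e_j \leq M\}, \]
viewed as positive integers bounded by $\Lambda^{kM}$; for $p$ large enough these are distinct modulo $p$, so $|\Gamma| = (M+1)^k$. Pick $\delta$ just below $1/2$ so that $(2\delta)^{|\Gamma|}$ is slightly under $1/2$, and set $S := B(\Gamma,\delta)$; standard Bohr-set size bounds place $|S|$ in $(0, p/2]$ (in fact roughly equal to $p/2$). The translation boundary estimate yields $|S \triangle (S+1)| \leq (\epsilon/2)|S|$ once $p$ exceeds a threshold like $(M+1)^k \Lambda^{kM}/\epsilon$. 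For each $i$, the frequency sets $\Gamma$ and $\lambda_i^{-1}\Gamma$ coincide except in two "boundary layers" of size $(M+1)^{k-1}$ (the $e_i = M$ layer of $\Gamma$ and the $e_i = -1$ layer of $\lambda_i^{-1}\Gamma$), so writing $S \cap \lambda_i S = B(\Gamma \cup \lambda_i^{-1}\Gamma, \delta)$ and applying the Bohr-set lower bound again gives
\[ |S \cap \lambda_i S|/|S| \geq (2\delta)^{|\lambda_i^{-1}\Gamma \setminus \Gamma|} = (2\delta)^{(M+1)^{k-1}} \approx 2^{-1/(M+1)} \geq 1 - \epsilon/(2k), \]
using $(2\delta)^{|\Gamma|} = 1/2$ and $M + 1 \geq 4k/\epsilon$. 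Summing the $k+1$ boundary contributions, $|S \cup (S+1) \cup \bigcup_i \lambda_i S| \leq (1 + \epsilon/2 + k\cdot\epsilon/(2k))|S| = (1+\epsilon)|S|$, as required.

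The main technical obstacle is making the ratio step rigorous. The lower bound $|B(\Gamma',\delta)| \geq (2\delta)^{|\Gamma'|}p$ is a standard counting fact, but extracting a clean lower bound on $|S \cap \lambda_i S|/|S|$ requires a matching near-equality on $|S|$ itself, which in turn requires the integer frequencies in $\Gamma$ to be in sufficiently general position modulo $p$. For our explicit product frequencies this is automatic once $p$ is large compared to $\Lambda^{kM}$, and this is precisely what fixes the constant $C = C(\epsilon, k, \lambda_1, \ldots, \lambda_k)$ in the statement.
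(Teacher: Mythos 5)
Your construction is in the right F\o lner spirit, but the step carrying all the weight is unproved, and it is not a technicality. Both the requirement $|S|\le p/2$ and the key ratio estimate $|S\cap\lambda_i S|/|S|\ge(2\delta)^{(M+1)^{k-1}}$ rest on a two-sided near-equality $|B(\Gamma',\delta)|=(1+o(1))(2\delta)^{|\Gamma'|}p$ for your frequency sets, which you describe as a standard counting fact plus ``general position once $p$ is large''. The standard fact is only the lower bound $|B(\Gamma',\delta)|\ge\delta^{|\Gamma'|}p$; the heuristic value $(2\delta)^{|\Gamma'|}p$ is in general neither a lower bound (for $\Gamma'=\{1,2\}$ and $\delta$ close to $1/2$ the density is $4\delta-1<4\delta^2$) nor an upper bound (for $\Gamma'=\{1,2,\dotsc,d\}$ and $\delta$ small the density is at least $2\delta/d$, which dwarfs $(2\delta)^d$). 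Taking $p$ large compared to $\Lambda^{kM}$ does not help: in that regime $|B(\Gamma',\delta)|/p$ simply converges to the measure of $\{t\in\mathbb{R}/\mathbb{Z}:\|\gamma t\|\le\delta\ \text{for all }\gamma\in\Gamma'\}$, and that measure is governed by the additive relations among the fixed integer frequencies, not by $p$. What you actually need is a near-independence (escape-rate) statement: for $\gamma$ ranging over the multiplicative box $\{\lambda_1^{e_1}\cdots\lambda_k^{e_k}:0\le e_j\le M\}$, with $1-2\delta\asymp(M+1)^{-k}$, the measure of the intersection of the $(M+1)^k$ events $\|\gamma t\|\le\delta$ matches the product of their measures up to a factor $1+O(\epsilon/k)$, and likewise after adjoining the layer $\lambda_i^{-1}\Gamma\setminus\Gamma$. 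Already for $k=1$, $\lambda=2$ this is an escape-rate asymptotic for the doubling map with a shrinking hole at $1/2$ --- plausibly true, but requiring a transfer-operator or string-counting argument --- and it is exactly the content you have labelled ``automatic''. Note also that $|\Gamma|=(M+1)^k$ fails whenever the $\lambda_i$ are multiplicatively dependent (e.g.\ $\lambda_2=\lambda_1^2$), which by itself derails the tuning of $\delta$ and can push $|S|$ past $p/2$.

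By contrast, the elementary construction intended in Exercise \ref{prob:whar} needs no Bohr-set size estimates at all: take $V$ a long interval and let $S$ be a disjoint union of the dilates $\lambda_1^{-j_1}\cdots\lambda_k^{-j_k}V$ with $0\le j_i\le r$ and $r\approx 2k/\epsilon$. Then $\lambda_i S\setminus S$ is contained in the image of the $j_i=0$ layer, of size at most $|S|/(r+1)$, while $(S+1)\setminus S$ contributes at most one element per constituent interval, which is negligible once $p$ (hence $|V|$) is large compared to $(\max_i|\lambda_i|)^{kr}$; everything is exact counting. If you want to keep the Bohr-set formulation, the honest route is to prove the escape-rate estimate described above --- at which point you will in effect have reconstructed this interval picture.
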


\begin{prob}\label{prob:whar}
  Prove Proposition \ref{prop:garn}. {\em Hints:} prove this for $k=1$ first;
  you can assume $\lambda = \lambda_1$ is $\geq 2$. Here is a plan. 
  We want to show that $|S\cup (S+1) \cup \lambda S|\leq (1+\epsilon) |S|$.
  For $|S\cup (S+1)|$ to be $\leq (1+\epsilon/2) |S|$, it is enough that
  $S$ be a union of intervals of length $> 2/\epsilon$.
  (By an {\em interval} we mean the image of an interval $\lbrack a,b\rbrack
  \cap \mathbb{Z}$ under the map $\mathbb{Z}\to \mathbb{Z}/p\mathbb{Z}
  \sim \mathbb{F}_p$.)
  We also want
  $|S\cup \lambda S|\leq (1+\epsilon) |S|$; this will be the case 
  if $S$ is the union of disjoint sets of the form $V$, $\lambda^{-1} V$,
  \dots, $\lambda^{-r} V$, $r\geq \epsilon/2$. Now, in $\mathbb{F}_p$, if $I$
  is an interval of length $\ell$, then $\lambda^{-1} I$ is the union
  of $\lambda$ intervals (why? of what length?). Choose $V$ so that
  $V, \lambda^{-1} V, \dotsc, \lambda^{-r} V$ are disjoint. Let $S$
  be the union of these sets; verify that it fulfills (\ref{eq:kuklo}).
\end{prob}
The following exercise shows that Prop.~\ref{prop:garn} is closely connected
to the fact that a certain group is {\em amenable}.
\begin{prob}
  Let $\lambda\geq 2$ be an integer. Define the {\em Baumslag-Solitar group}
  $\BS(1,\lambda)$ by
  \[\BS(1,\lambda) = \langle a_1,a_2| a_1 a_2 a_1^{-1} = a_2^{\lambda}\rangle.\]
  \begin{enumerate}
  \item A group $G$ with generators $a_1,\dotsc,a_\ell$ is called
    {\em amenable} if, for every $\epsilon>0$, there is a finite $S\subset G$
    such that \[|F\cup a_1 F \cup \dotsc \cup a_\ell F|\leq (1+\epsilon)
    |F|.\]
    Show that $\BS(1,\lambda)$ is amenable. {\em Hint:} to construct $F$,
    take your inspiration from Exercise \ref{prob:whar}.
  \item Express the subgroup of the affine group over $\mathbb{F}_p$
    generated by the set
    \begin{equation}\label{eq:anfesio}
      A_\lambda=\left\{\left(\begin{matrix}\lambda & 0\\0 & 1\end{matrix}\right),
      \left(\begin{matrix}1 & 1\\0 & 1\end{matrix}\right)\right\}\end{equation}
        as a quotient of $\BS(1,\lambda)$, i.e., as the image of a homomorphism
        $\pi_p$ defined on $\BS(1,\lambda)$.
  \item Displace or otherwise modify your sets $F$ so that, for each
    of them, $\pi_p|_F$ is injective for $p$ larger than a constant.
    Conclude that $S = \pi_p(F)$ satisfies (\ref{eq:kuklo}), thus giving
    a (slightly) different proof of exercise \ref{prob:whar}.
    \end{enumerate}
  \end{prob}
    
Amenability is not good news when we
are trying to prove that a diameter is small, in that it closes
a standard path towards showing that it is logarithmic in the size of the
group. However, it does not imply that the diameter is not small.

Let us first be clear about what we can prove or rather about what we
cannot hope to prove.
We should not aim at a bound on the diameter of the affine group $G$ with
respect to an arbitrary set of generators $A$:
it is easy to choose $A$ so that the diameter
of $\Gamma(G,A)$ is very large.

\begin{prob}\label{prob:saignant}
  Let $A_\lambda$ be as in (\ref{eq:anfesio}) for 
  $\lambda$ a generator of $\mathbb{F}_p^*$. Let
  $A = A_\lambda \cup A_\lambda^{-1}$.
  Then $A$ generates the affine
  group $G$ over $\mathbb{F}_p$.
  Show that $\diam \Gamma(G,A) = (p-1)/2$.
\end{prob}

Rather, we should aim for a bound on the diameter of the {\em Schreier
  graph} of the action of the affine group $G$ by conjugation on its maximal
unipotent subgroup $U$. In general, the Schreier graph of an action
$G\curvearrowright X$ of a group $G$ on a set $X$ with respect to a set
of generators $A$ of $G$ is the graph having $X$ as its set of vertices
and $\{(x,a x): x\in X, a\in A\}$ as its set of edges.
In our case ($X=U$, $A=A_\lambda \cup A_\lambda^{-1}$,
$\lambda\in \mathbb{F}_p^*$),
the Schreier graph is isomorphic to the graph $\Gamma_{p,\lambda}$
      with vertex set $\mathbb{F}_p$ and edge set
      \[\{(x,x+1): x\in \mathbb{F}_p\} \cup
      \{(x,x-1): x\in \mathbb{F}_p\} \cup
      \{(x,\lambda x): x\in \mathbb{F}_p\} \cup
      \{(x,\lambda^{-1} x): x\in \mathbb{F}_p\} .\]
      We are not avoiding the problem posited by the fact that
the Baumslag-Solitar group $\BS(1,\lambda)$ is amenable, since what amenability impedes is precisely
      a natural approach
      to prove logarithmic diameter bounds on $\Gamma_{p,\lambda}$.
      If Proposition \ref{prop:garn} were not true, then the diameter
      of $\Gamma_{p,\lambda}$ would be $O(\log p)$. (Why?)

      If $\lambda$ is the projection of a fixed integer $\lambda_0$, then
      it is possible, and easy, to give a logarithmic diameter bound
      nevertheless.
      \begin{prob}
        Let $\lambda_0\geq 2$ be an integer. Let $\lambda = \lambda_0 \mo p$,
        which lies in $\mathbb{F}_p^*$ for $p>\lambda_0$.
        Show that the diameter of the graph $\Gamma_{p,\lambda}$ 
        is $O(\lambda_0 \log p)$. {\em Hint:} lift elements of
        $\mathbb{F}_p$ to $\mathbb{Z}\cap \lbrack 0,p-1\rbrack$, and write them out in base
        $\lambda_0$.
      \end{prob}

      It turns out to be possible to give a {\em polylogarithmic} bound for
      general $\lambda\in \mathbb{F}_p^*$:
      \begin{equation}\label{eq:dadojo}
        \diam \Gamma_{p,\lambda} \ll (\log p)^{O(1)},
      \end{equation}
      where the implied constants are independent of $p$ and
      $\lambda$. Here we need not assume that
      $\lambda$ generates $\mathbb{F}_p^*$, but we do assume
      that the order of $\lambda$ is $\gg \log p$.
      (Indeed, if the order of $\lambda$ is very small, viz.,
      $o((\log p)/\log \log p)$, then (\ref{eq:dadojo}) cannot hold; why?)     

      The proof of (\ref{eq:dadojo}) was the outcome of
      a series of discussions among B. Bukh, A. Harper, E. Lindenstrauss
      and the author. It is essentially an exercise in Fourier analysis
      using bounds on exponential sums due to Konyagin \cite{MR1289921}.
      \begin{prob}
Let $p$ be a prime, $\lambda\in \mathbb{F}_p^*$. Assume $\lambda$ has
order $\geq \log p$.
 Write $e(t) = e^{2\pi i t}$ and $e_p(t) = e^{2\pi i t/p}$. Konyagin \cite[Lemma 6]{MR1289921} showed that, for any $\epsilon>0$, there is a $c_\epsilon>0$ such that,
for any $p\geq c_\epsilon$ prime
and $\alpha, \lambda\in (\mathbb{Z}/p\mathbb{Z})^*$ with $\lambda$ of order
$\geq c_\epsilon (\log p)/(\log \log p)^{1-\epsilon}$ in the group
$(\mathbb{Z}/p\mathbb{Z})^*$,
\begin{equation}\label{eq:dolto}
  \sum_{j=0}^J |\{\alpha \lambda^j/p\}|^2\geq \frac{1}{(\log p)^{3^{\epsilon/4}}},
  \end{equation} 
where  $J = \lfloor c_\epsilon \log p (\log \log p)^4\rfloor$ and
$\{x\}$ is the element of $(-1/2,1/2\rbrack$ such that
$x-\{x\}$ is an integer.
\begin{enumerate}
  \item
Show that (\ref{eq:dolto}) implies that $S(\alpha) = \sum_{j=0}^J e_p(\alpha \lambda^j)$
satisfies $|S(\alpha)|\leq J + 1 - 1/(\log p)^{3^{\epsilon/4}/2}$ for
every $\alpha\in (\mathbb{Z}/p\mathbb{Z})^*$.
\item
Deduce that every element of $\mathbb{Z}/p\mathbb{Z}$ can
be written as a sum $\sum_{i=1}^K \lambda^{j_i}$,
where $0\leq j_i\leq J$ and $K$ is bounded by
\[K\ll J (\log p)^{3^{\epsilon/4}/2} (\log p)
\ll_\epsilon (\log p)^{2+3^{\epsilon/4}/2} (\log \log p)^4
\ll_\epsilon (\log p)^{5/2 + \epsilon}.\]
To do so, show first
that for any sequence $r_0,\dotsc,r_j\in \mathbb{Z}/p\mathbb{Z}$,
the number of ways of expressing $x\in \mathbb{Z}/p\mathbb{Z}$ as a sum
of $K$ elements (not necessarily distinct) of a subset
$A\subset \mathbb{Z}/p\mathbb{Z}$ equals
\[\frac{1}{p} \sum_{\alpha\in \mathbb{Z}/p\mathbb{Z}}
S_A(\alpha)^K e_p(- \alpha x),\]
where $S_A(\alpha) = \sum_{a\in A} e(\alpha a)$. This approach is 
the {\em circle method} over $\mathbb{Z}/p\mathbb{Z}$.
\item
Conclude that
the graph $\Gamma_{p,\lambda}$
      with vertex set $\mathbb{F}_p$ and edge set
      \[\{(x,x+1): x\in \mathbb{F}_p\} \cup
      \{(x,\lambda x): x\in \mathbb{F}_p\}\]
      has diameter $\ll_\epsilon (\log p)^{5/2+\epsilon}$.
      \end{enumerate}
\end{prob}



      
\section{Intersections with varieties}\label{chap:inter}

Let $G$ a linear algebraic group defined over a field $K$. Let $A$
be a finite set of generators of the set of points of $G$ over $K$.

We will first show that, 
unless all the points of $G$ over $K$ lie in $V$,
there are (plenty of) elements of $A^k$, $k$ bounded, that do not lie on $V$
({\em escape from subvarieties}). Here the constant $k$ depends only
on some invariants of $V$ (its number of components, their degree and
their dimension), not on $K$ or on other properties of $V$.

Our main aim will then be to show that, if $A$ grows slowly, then
$A$ is truly a beautiful object, very regular from many points of view.
Of course, this is a strategy for showing in the following section
that $A$ does not exist (or is almost all of $G$).

``Very regular'' here means ``behaving well with respect to the algebraic
geometry of the ambient group $G$''. To be precise: the intersection of
a slowly growing set $A$ with any variety $V$ will be bounded by
not much more than $|A|^{\dim(V)/\dim(G)}$ (Theorem \ref{thm:lp}; the
{\em dimensional estimate}).

Here is an intuitive image. Thinking for a moment in three dimensions (that is, $\dim(G)=3$), one might say that this estimate
means that $A$ is very regular in the sense of being a roughly spherical blob, as its intersection with any
line, or any curve of bounded degree,
is bounded by $O(|A|^{1/3})$,
and its intersection with any plane, or any surface of bounded degree,
is bounded by $O(|A|^{2/3})$.

Finally, we will see that for some kinds of varieties $V$
-- namely, centralizers --
we can give a {\em lower} bound on the intersection of $A$ with $V$,
roughly of the same order as the upper bound above. This fact will be
a crucial tool in \S \ref{sec:growthth}.

\subsection{Preliminaries from algebraic geometry and algebraic groups}\label{subs:prelim}

We will have the choice of working sometimes over linear algebraic groups
and sometimes
over Lie algebras (as in \cite{MR3348442}, following \cite{HeSL3})
or solely over linear algebraic groups (as in \cite{MR3309986},
which follows \cite{BGT}). We will follow the first path. Naturally, we will
 need some preliminaries on varieties, 
their behavior under mappings, the derivatives of such mappings, and so forth.
It will all be a quick review for some readers. When it comes to
basic algebraic geometry,
we will cite mainly \cite{MR1748380} and \cite{Hartshorne}, as they are
standard sources for English speakers. In the case of either source, we will limit ourselves
to the first chapter, that is, to classical foundations. Our definitions for
terms related to
algebraic groups come mostly from \cite{zbMATH01219612} and \cite{zbMATH00050185}; basic facts on finite groups of Lie type come from
\cite[ch.~21 and 24]{malle2011linear}.

  .
\subsubsection{Basic definitions.}
We will need some basic terms from algebraic geometry.
Let $K$ be a field; denote by $\overline{K}$ an algebraic closure of $K$.
For us, a variety $V$ will simply be an affine or a projective variety --
that is, the algebraic set consisting of the solutions in $\mathbb{A}^n$
to a system of polynomial equations,
or the solutions in $\mathbb{P}^n$
to a system of homogeneous polynomial equations. 
We say $V$ is defined over $K$ if $V$
can be described by polynomial equations with coefficients in $K$.
Given a field $L$ containing $K$, we write $V(L)$ for the set of solutions
with coordinates in $L$. When we simply say ``points on $V$'', we mean
elements of $V(\overline{K})$.

Abstract algebraic varieties
(as in, say, \cite[Def.~I.6.2]{MR1748380}) will not really be needed, although they
do give a
very natural way to handle a variety that parametrizes a family
of varieties, among many other things. For
instance, we will tacitly refer to the variety of all $d$-dimensional planes in projective space, and, while that variety (a {\em Grassmanian})
can indeed be defined as an algebraic set in projective space, that is a
non-obvious though standard fact.

The {\em Zariski topology} on $\mathbb{A}^n$ or $\mathbb{P}^n$
is the topology whose open sets are the complements of varieties
(affine ones if we work in $\mathbb{A}^n$, projective ones if we work in
$\mathbb{P}^n$).
It induces a topology, also called Zariski topology,
on any variety $V$; its open sets are the complements $V\setminus W$
of subvarieties $W$ of $V$. (A {\em subvariety} of $V$ is a variety contained
in $V$.) The {\em Zariski closure} $\overline{S}$ of a subset $S$ of $V$ is its
closure in the Zariski topology.

A variety $V$ is {\em irreducible} if it is not the union of two varieties
$V_1, V_2 \ne \emptyset, V$. (Note that many authors call an algebraic set a variety
only if it is irreducible.)
Every variety $V$
can be written as a finite union of irreducible varieties $V_i$, with
$V_i \not\subset V_j$ for $i\ne j$; they are called the {\em irreducible
  components} (or simply the components) of $V$.

When we say
``property $P$ holds for a generic point in the variety $V$'',
we simply means that there is a dense open subset $U\subset V$
such that property $P$ holds for every point on $U$.
It is easy to see that a non-empty open subset of an irreducible variety
is always dense.

The {\em dimension} $\dim V$
of an irreducible variety $V$ is the largest $d$ such that
there exists a chain of irreducible varieties
\[V_0 \subset V_1 \subset \dotsb \subset V_d = V.\]
The union of several irreducible varieties of dimension $d$ is called
a {\em pure-dimensional} variety of dimension $d$. If $W$ is a
pure-dimensional proper subvariety
of an irreducible variety $V$, then $\dim W < \dim V$
\cite[Cor.~I.7.1]{MR1748380}. (A subvariety $W\subset V$ is {\em proper}
if $W\ne V$.) 

The direct product $V\times W$ of irreducible varieties $V$, $W$
is an irreducible variety of dimension is $\dim V + \dim W$
(\cite[Exer.~I.3.15 and I.2.14]{Hartshorne} or \cite[Prop.~I.6.1,
Thm.~I.6.3 and Prop.~I.7.5]{MR1748380}).



\subsubsection{Degrees. Bézout's theorem.}
The degree of a pure-dimensional
variety $V$ in $\mathbb{A}^n$ or $\mathbb{P}^n$ of dimension $d$ is its
number of points of intersection with a generic plane of dimension $n-d$.
(See? We just referred tacitly to\dots) 

{\em B\'ezout's theorem}, in its classical formulation, states that,
for any two distinct irreducible curves $C_1$, $C_2$ in $\mathbb{A}^2$,
the number of points of intersection $(C_1\cap C_2)(\overline{K})$ is
at most $d_1 d_2$. (In fact, for $C_1$ and $C_2$ generic, the number of
points of intersection is exactly $d_1 d_2$; the same is true
for {\em all} distinct $C_1$, $C_2$ if we count points of intersection
with multiplicity.)

In general, if $V_1$ and $V_2$ are irreducible varieties, and we write
$V_1\cap V_2$ as a union of irreducible varieties $W_1, W_2,\dotsc,W_k$
with $W_i\not\subset W_j$ for $i\ne j$, a generalization of B\'ezout's
theorem tells us that
\begin{equation}\label{eq:bezout}
  \sum_{i=1}^k \deg(W_k) \leq \deg(V_1) \deg(V_2).\end{equation}
See, for instance, 
\cite[p.251]{MR1658464}, where Fulton and MacPherson are mentioned in
connection to this and even more general statements.

Inequality (\ref{eq:bezout}) implies immediately
that, if a variety $V$ is defined
by at most $m$ equations of degree at most $d$, then the number and degrees
of the irreducible components of $V$ are bounded in terms of $m$ and $d$ alone.

\subsubsection{Morphisms.}\label{subsub:morph}

A {\em morphism} from a variety $V_1\subset \mathbb{A}^{m}$ to a variety
$V_2\subset \mathbb{A}^{n}$ is simply a map $f:V_1\to V_2$ of the form
\[(x_1,\dotsc,x_{m})\mapsto (P_1(x_1,\dotsc,x_{m}),\dotsc,
P_{n}(x_1,\dotsc,x_{m})),\] where $P_1,\dots,P_n$ are polynomials.
It is clear that the preimage $f^{-1}(W)$ of a subvariety $W\subset V_2$
is a subvariety of $V_1$.

What is not at
all evident a priori is that, for $W\subset V_1$ a subvariety, the image
$\phi(W)$ is a {\em constructible set}, meaning a finite union of
 terms of the form $W\setminus W'$,
where $W$ and $W'\subset W$ are varieties. (For instance, if
$V\subset \mathbb{A}^2$ is the variety given by $x_1 x_2 = 1$ (a hyperbola),
then its image under the morphism $\phi(x_1,x_2) = x_1$ is the constructible
set $\mathbb{A}^1 \setminus \{0\}$.) This result is due to Chevalley
\cite[Cor.~I.8.2]{MR1748380}.\footnote{As R. Vakil says of the closely
  related statement that the image of a projective variety under a morphism
  is a projective variety: ``a great deal of classical algebra and geometry
  is contained in this theorem as special cases.'' In model-theoretical
  terms, we are talking of quantifier elimination.}
  

Let $V$ be irreducible and let $f:V\to \mathbb{A}^n$
be a morphism. It is easy to see that the Zariski closure $\overline{f(V)}$
must be irreducible, and that $\dim \overline{f(V)} \leq \dim V$.
Let $d =\dim V - \overline{f(V)}$.
Then there is
a Zariski open subset $U\subset \overline{f(V)}$ such that, for
every $x\in U$, the preimage $f^{-1}(\{x\})$ is a pure-dimensional variety
of dimension $d$ \cite[Thm.~I.8.3]{MR1748380}.

It is easy to see
(by B\'ezout (\ref{eq:bezout})) that
the degree of $f^{-1}(\{x\})$ is
bounded in terms of $\deg(V)$, $n$ and the degrees of the
polynomials $P_1,\dotsc,P_n$ defining $f$.
If $\dim V = \overline{f(V)}$, $f^{-1}(\{x\})$ is $0$-dimensional, and so
its number of points is bounded by its degree, by the definition of degree.

\subsubsection{Tangent spaces and derivatives}

Let $V\subset \mathbb{A}^n$ be a variety of dimension $d$ defined by
equations $P_i(x_1,\dotsc,x_n)=0$, $1\leq i\leq k$.
The {\em tangent space} $T_x V$ of $V$ at $x$ is the kernel
of the linear map from $\mathbb{A}^n$ to $\mathbb{A}^k$ given by the
matrix $\mathscr{P}|_x = \left(\partial P_i/\partial x_j\right)_{1\leq i\leq k, 1\leq j\leq n}$.
(These are formal partial derivatives.)
A point $x$ on $V$ is {\em non-singular} if $\dim T_x V = \dim V$, and
{\em singular} otherwise.
The set of singular points is a proper subvariety of $V$
\cite[Thm.~I.5.3]{Hartshorne}.

Let $V\subset \mathbb{A}^n$, $W\subset \mathbb{A}^m$ be varieties and
let $f:V\to W$ be a morphism. 
At any point $\vec{x}$ on $V$, the linear map given
by the matrix $J|_x = \left(\frac{\partial f_i}{\partial x_j}\right)_{1\leq i\leq m,1\leq j\leq n}$
restricts to a linear map $Df|_x:T_x V \to T_x W$ (as follows from
the chain rule). For any $r\geq 0$, the set of non-singular points on $V$
such that the rank of $Df|_x$ is at least $r$ is Zariski-open in $V$.
This fact is easy to see for $V=\mathbb{A}^n$: the rank is then $<r$ if and only
if every $r$-by-$r$ minor of $J|_x$ is $0$, a condition that defines a subvariety. For $V$ general, define a new matrix by putting the matrix $\mathscr{P}|_x$
on top of the matrix $J|_x$, and note that the new matrix will have rank at least
$n-\dim(V)+r$ if and only if $Df|_x$ has rank at least $r$; thus we
can proceed as for $V=\mathbb{A}^n$.

\begin{prob}
  Let $V$, $W$ be varieties, $V$ irreducible, $f:V\to W$ a morphism,
  and $x$ a non-singular point on $V$. Prove that, if the rank of
  $Df|_x$ is at least $r$, then the dimension of $\overline{f(V)}$ is at least
  $r$.
\end{prob}

\subsubsection{Linear algebraic groups} 

A {\em linear algebraic group} over a field $K$
is a subvariety $G$ of $\GL_n$, defined over $K$,
that is closed under multiplication and inversion.\footnote{Alternatively, we could define a linear algebraic group $G$ to be an affine variety with two morphisms
  $\cdot:G \times G\mapsto G$ and ${}^{-1}:G\to G$ satisfying the usual rules,
  and then prove that $G$ is isomorphic to a subvariety of $\GL_n$ with the
  multiplication and inversion morphisms it inherits from $\GL_n$
  \cite[Prop.~1.10]{zbMATH00050185}.}
We thus have morphisms $\cdot:G\times G\mapsto G$ and ${}^{-1}:G\to G$.
An {\em algebraic} or {\em closed} subgroup of $G$ is a subvariety $H$ of $G$
that is also closed under multiplication and inversion.

We will assume that the field of definition
$K$ is {\em perfect}, meaning that
every finite extension of $k$ is separable;
this assumption will save us from possible
trouble. Finite fields, fields of characteristic
$0$ and algebraically closed fields are always perfect fields.

A linear algebraic group $G$ is {\em semisimple} if it has no
connected,
 non-trivial and solvable normal algebraic subgroups, even defined over
 $\overline{K}$. (``Connected'' means ``connected in the Zariski topology;
 an algebraic group is connected if and only if it is irreducible
 \cite[Prop.~2.2.1]{zbMATH01219612}. For algebraic groups, being
      {\em solvable} is defined analogously as for groups
      \cite[\S 2.4]{zbMATH00050185}.)
We say $G$ is {\em simple} (over $K$) if it is semisimple,
connected and has no connected, proper and non-trivial normal algebraic subgroups defined over $K$.\footnote{Some sources (e.g., \cite[\S 22.8]{zbMATH00050185}) give the name {\em almost-simple} to what we call {\em simple}.}

Let $G$ be an arbitrary linear algebraic group over a field $K$.
An element $g\in G(\overline{K})$.
is {\em semisimple} if it is diagonalizable over $\overline{K}$.
Note that, by \cite[\S 4.3, Prop.]{zbMATH00050185} 
and the first definition in \cite[\S 4.5]{zbMATH00050185},
the semisimplicity of $g$ is invariant under isomorphisms of $G$, i.e.,
it does not actually depend on the embedding of $G$ into $\GL_n$.

A {\em torus} $T<\GL_n$
is an algebraic group isomorphic to $\GL_1^r$ over $\overline{K}$
for some $r\geq 1$.
A torus defined over $K$ is always diagonalizable over $\overline{K}$
\cite[\S 8.5, Prop.]{zbMATH00050185}; that is, there exists $g\in \GL_n(\overline{K})$ such that $g T g^{-1}$ is a subgroup of the group of diagonal matrices
in $\GL_n$.
A {\em maximal torus} of a connected linear algebraic group $G$ is a
torus $T<G$ with $r$ maximal. We call $r$ the {\em rank} of $G$.
If $G$ is connected, then
every semisimple $g\in G(\overline{K})$ lies in a maximal torus
\cite[Thm.~6.4.5(ii)]{zbMATH01219612}.

The centralizer $C(g)$ of a semisimple point $g$ in $G$ has dimension at least
$r=\rank(G)$; if $\dim C(g) = \rank(G)$, we say $g$ is {\em regular}. When
$G$ is semisimple, a semisimple element
$g\in G(\overline{K})$ is regular if and only if
the connected component $C(g)^\circ$
of $C(g)$ containing the identity is a maximal torus
(\cite[\S 12.2, Prop., and \S 13.17, Cor.~2(c)]{zbMATH00050185}).
A regular semisimple element $g\in G(\overline{K})$ lies in exactly one
maximal torus \cite[\S 12.2, Prop.]{zbMATH00050185}.
For $G$ semisimple,
regular semisimple elements form a non-empty open subset of $G$
\cite[\S 2.14]{steinberg1965regular}.

\subsubsection{Lie algebras}

A {\em Lie algebra} is a vector space $\mathfrak{g}$ over a field $K$
together with a bilinear map $\lbrack \cdot,\cdot\rbrack: \mathfrak{g}\times
\mathfrak{g}\to \mathfrak{g}$ satisfying the identities
\begin{equation}\label{eq:bracket}\lbrack x,y\rbrack = - \lbrack y,x\rbrack,\;\;\;\;\;\;
\lbrack x,\lbrack y,z \rbrack\rbrack +
\lbrack y,\lbrack z,x \rbrack\rbrack +
\lbrack z,\lbrack x,y \rbrack\rbrack = 0.\end{equation}
An {\em ideal} of a Lie algebra is a subspace $\mathfrak{v}\subsetneq
\mathfrak{g}$ such that $\lbrack g,\mathfrak{v}\rbrack \subset \mathfrak{v}$.
We say a Lie algebra is {\em simple} if it has no ideals other than $(0)$.

A linear algebraic group $G$ acts on its tangent space $\mathfrak{g}=T_e G$
at the origin
by conjugation: for $g\in G$, we
define the linear map $\Ad_g:\mathfrak{g}\to \mathfrak{g}$ to be the
derivative of $y\mapsto g y g^{-1}$. The derivative of $\Ad_g$ with respect
to $g$ can be written as a bilinear map
$\mathfrak{g}\times \mathfrak{g}\to \mathfrak{g}$, which we call
$\lbrack \cdot,\cdot\rbrack$; it is fairly straightforward to check that it
satisfies the identities in (\ref{eq:bracket}), and thus
makes $\mathfrak{g}$
into a Lie algebra.

It is easy to see that, if a subspace $\mathfrak{v}$ of the Lie algebra
$\mathfrak{g}$ of a linear algebraic group $G$ is invariant under $\Ad_g$
for every $g\in G$, then $\mathfrak{v}$ is an ideal. Thus, if $G$ is not simple,
then $\mathfrak{g}$ is not simple.

It would be convenient if $G$ simple implied $\mathfrak{g}$ simple,
but that is not quite true\footnote{To the contrary of what was carelessly
  stated in the proof of Prop.~5.3 in the survey \cite{MR3348442}.}.
However, there are only a few exceptions, all in small characteristic.
 To summarize: for $G=\SL_n$,
the Lie algebra $\mathfrak{g} = \mathfrak{sl}_n$ is simple provided that
the characteristic $p$ of the field $K$ does not divide $n$.
(If $p|n$, then
$\mathfrak{sl}_n$ has non-trivial center, namely, the multiples of the
diagonal matrix $I$.) For almost simple Lie groups $G$ such that $\mathfrak{g}$
is not isomorphic to $\mathfrak{sl}_n$, we have that $\mathfrak{g}$
is simple provided that $\charac(K)>3$ \cite[Table 1]{zbMATH03807840}.
(The assumption in \cite{zbMATH03807840} that the ground field is
algebraically closed is harmless, as, if $\mathfrak{g}$ is simple over
$\overline{K}$, it follows trivially that $\mathfrak{g}$ is simple over
$K$: a decomposition over $K$ would also be valid over $\overline{K}$.)
In fact, $\charac(K)>2$ is enough for all Lie algebras of
type other than $A_n$ (corresponding to $\SL_n$), $E_6$ and $G_2$, by
the same table.

In spite of this small-characteristic phenomenon, we will nevertheless descend
from the algebraic groups to
Lie algebra at an important step (proof of Lemma \ref{lem:tush}), as then matters arguably become
particularly clear and straightforward.



\subsubsection{Finite groups of Lie type}

The general definition of a finite group of Lie type is that it is the
group $G^F$
of points on a semisimple algebraic group $G$ defined over a finite field
$\mathbb{F}_q$ that are left fixed by a {\em Steinberg endomorphism} $F:G\to G$.
A Steinberg endomorphism is an endomorphism $F:G\to G$ such that, for some
$m\geq 1$, $F^m$ is the {\em Frobenius map} with respect to $\mathbb{F}_q$.
The Frobenius map with respect to $\mathbb{F}_q$ is the map sending every
element $g\in G(\overline{\mathbb{F}_q})$ with entries $g_{i,j}$ to the element with
entries $g_{i,j}^q$. It fixes precisely the elements of $G(\mathbb{F}_q)$.

The most familiar finite groups of Lie type (classical groups and Chevalley groups) are
of the form $G(\mathbb{F}_q)$, $G$ a semisimple algebraic group; they correspond to the case $m=1$. The groups that require $m>1$ are called {\em twisted groups}.

We will work out growth in $G(K)$, $G=\SL_2$, $K$ finite (or, more generally,
perfect) in a way that generalizes easily to other groups of Lie type
with $G$ simple.
It is possible to include twisted groups, as was shown in \cite{MR3402696};
however, our notation will be of the form $G(K)$, as is appropriate for $m=1$.

Requiring $G$ to be simple is not quite the same as requiring
the group of Lie type $G^F=G(K)$ to be simple. The simple groups coming
from groups of Lie type are of the form $G^F/Z(G^F)$, $G$ simple.\footnote{Two
  comments for the sake of precision are in order. (a)
  There is one group in the classification of finite simple groups that
  is almost but not quite of the type $G^F/Z(G^F)$: the {\em Tits group}
  \cite[p.~213]{malle2011linear}. As we said before, we need not care about
  individual groups in the classification, since we aim at asymptotic
  statements. (b) By a result of Tits
  \cite[Thm.~24.17]{malle2011linear},
given $G$ simple and {\em simply connected} \cite[Def.~9.14]{malle2011linear},
the group $G^F/Z(G^F)$ will be simple, provided we are not in a finite list
of exceptions. Notably, 
$\SO_n$ is not simply connected; one uses a simply-connected
finite cover of $\SO_n$ in its stead.}
The center $Z(G^F)$ is described in \cite[Table~24.2]{malle2011linear}.
It is very easy to pass from statements
on growth in $G^F$ to statements on growth in $G^F/Z(G^F)$, as we will see in
the case for $G=\SL_2$, where $Z(G^F)=\{I,-I\}$.


%

\subsection{Escape from subvarieties}
We are working with a finite subset $A$ of a group $G$.
At some points in the argument, we will need to make sure that we can
find an element $g\in A^k$ ($k$ small) that is {\em not} special: for example,
we want to be able to use a $g$ that is not unipotent, that does not have
a given $\vec{v}$ as an eigenvector, that {\em is} regular semisimple, etc. 

It is possible to give a completely general argument of this form. Let us
first set the framework. Let $G$ be a group acting by linear transformations
on $n$-dimensional space $\mathbb{A}^n$ over a field $K$.
In other words, we are given a homomorphism
$\phi:G\to \GL_n(K)$ from $G$ to the group of invertible matrices $\GL_n(K)$.
Let $W$ be a proper subvariety of $\mathbb{A}^n$.
We may think of points on $W$ as being {\em special},
and points outside $W$ as being generic.
We start with a point $x$ of
$\mathbb{A}^n$, and a subset $A$ of $G$. The following proposition ensures us
that, if, starting from $x$ and acting on it repeatedly by $A$, we can
eventually escape from $W$, then we can escape from it in a bounded number of
steps, and in many ways.

The proof\footnote{The statement of the proposition is as in \cite{HeSL3},
  based closely on \cite{MR2129706}, but the idea is probably older.}
proceeds by induction on the dimension, with the degree kept under control.
What is crucial for us is that the dimension is an integer, and thus
can be used as a counter for induction. (Alternatively, we could say that
the kind of induction we are about to undertake works because the ring
$K\lbrack x_1,\dotsc,x_n\rbrack$ is Noetherian.)

\begin{prop}\label{prop:huru}
  Let us be given
  \begin{itemize}
    \item $G$ a group acting linearly on affine space $\mathbb{A}^n$ over
      a field $K$,
    \item $W\subsetneq \mathbb{A}^n$, a subvariety,
    \item $A$ a set of generators of $G$ with $A =A^{-1}$, $e\in A$, 
\item $x\in \mathbb{A}^n$
such that the orbit $G\cdot x$ of $x$ is not contained in $W$.
  \end{itemize}
  
Then there are constants $k$, $c$ depending only the number, dimension and
degree of the irreducible components of $W$ such that
there are at least $\max(1,c|A|)$ elements
$g\in A^k$ for which $g x\notin W(K)$.
\end{prop}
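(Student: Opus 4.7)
I would argue by induction on a complexity measure of $W$, such as the lexicographic pair $(\dim W, \#\{\text{irreducible components of } W \text{ of maximal dimension}\})$, with the base case $W=\emptyset$ trivial (every element of $A$ is good). The inductive step has three moves: a reduction variety, a call to the inductive hypothesis, and a density step.

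\textbf{Reduction variety.} Consider
\[
W' := \bigcap_{a \in A} a^{-1}(W) = \{y \in \mathbb{A}^n : A\cdot y \subset W\}.
\]
Since $e \in A$, $W' \subseteq W$. If $x \notin W$, we are in an easy case where $e$ itself is good; I would handle the density separately there. Otherwise $x \in W$, and $W' \subsetneq W$: for if $W' = W$, then $W$ is $A$-invariant, hence $G$-invariant (as $A$ generates $G$ and $A = A^{-1}$), whence $G \cdot x \subset W$, contradicting the hypothesis. The key technical point is that $W'$ has strictly smaller complexity than $W$, bounded in terms of $W$'s invariants alone (not in terms of $|A|$). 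Indeed, for each irreducible component $W_i$, either $W_i$ is $A$-invariant (and is preserved), or $W_i \cap a^{-1} W$ is a proper subvariety of $W_i$ for some $a \in A$, which by irreducibility has strictly lower dimension; and by B\'ezout's inequality \eqref{eq:bezout} the degrees of all resulting components are bounded in terms of $\deg W$ and $\dim W$. The hypothesis $G \cdot x \not\subset W$ forces at least one maximal-dimensional component (the one involved in $x$'s orbit) to not be $A$-invariant, so the complexity pair strictly drops.

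\textbf{Induction and density.} Apply the inductive hypothesis to $W'$ to produce $k', c'$ such that at least $\max(1, c'|A|)$ elements $g \in A^{k'}$ satisfy $gx \notin W'$. For each such $g$, since $gx \notin \bigcap_a a^{-1} W$, there exists at least one $a \in A$ with $(ag)x \notin W$. To upgrade from existence to the required density $\geq c|A|$, I would count pairs $(g, a)$ where $g$ is good for $W'$ and $(ag)x \notin W$. For fixed good $g$, the number of good $a$ is $|A| - |A \cap V_{gx}|$, where
\[
V_y := \{h \in \GL_n : h\cdot y \in W\}
\]
is a subvariety of $\GL_n$ of complexity controlled by that of $W$, proper because $e \cdot(gx) = gx \notin W$. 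Recursively applying the same circle of ideas in $\GL_n$ (on a strictly smaller auxiliary variety obtained from $V_{gx}$), one shows $|A \cap V_{gx}| \leq (1 - c_0)|A|$ for some $c_0$ depending only on the invariants of $W$. The number of good pairs is then at least $c_0 |A| \cdot \max(1, c'|A|)$, and since the map $(g, a) \mapsto ag \in A^{k'+1}$ has preimages of size at most $|A|$, the count of good $h \in A^{k'+1}$ is at least $c_0 c' |A|$. We then set $k = k' + 1$ and $c = c_0 c'$.

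\textbf{Main obstacle.} The key difficulty is showing in the reduction step that $W' = \bigcap_{a \in A} a^{-1} W$ has complexity bounded by $W$ alone, independent of $|A|$, and strictly smaller than $W$. The point is that any dimension-effective intersection with a translate $a^{-1} W$ strictly drops the dimension of an irreducible component, so components of $W'$ arise from at most $\dim W$ successive dimension-drops, keeping degrees under control by B\'ezout; components that never drop must be $A$-invariant, and the hypothesis $G\cdot x\not\subset W$ rules out such a component holding the entire orbit. The density step has an analogous subtlety: the recursion on $V_{gx}$ must terminate, which is guaranteed by the strict complexity decrease at each level, matching the outer induction.
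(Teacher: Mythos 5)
There is a genuine gap, and it sits exactly where the proposition is delicate: the density count. For a fixed good $g$ (meaning $gx\notin W'=\bigcap_{a\in A}a^{-1}W$) it is simply not true that at least $c_0|A|$ elements $a\in A$ satisfy $agx\notin W$, with $c_0$ depending only on the invariants of $W$: the set $A$ may be almost entirely contained in the stabilizer of a component of $W$. Concretely, in $\mathbb{A}^2$ take $W=\{(u,v):v=0\}$, $x=(1,0)$, and $A=B\cup\{e,s,s^{-1}\}$ with $B$ a huge symmetric set of invertible upper-triangular matrices (all of which map $W$ into $W$) and $s=\left(\begin{smallmatrix}0&-1\\1&0\end{smallmatrix}\right)$; here $W'=\{0\}$, so $g=e$ is good, yet only the two elements $s^{\pm1}$ of $A$ move $gx=x$ off $W$. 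So your bound $|A\cap V_{gx}|\leq(1-c_0)|A|$ fails, and the proposed "recursive application of the same circle of ideas" to prove it is circular: escape can only ever be guaranteed inside $A^k$ for a bounded $k\geq 2$, never for a fixed positive proportion of $A$ itself. (Also, properness of $V_{gx}$ does not follow from $gx\notin W$ — being good only gives $gx\notin W'$, and $gx$ may well lie in $W$; properness follows instead from the existence of one good $a$.) Without the per-$g$ density you only get one good $a$ per good $g$, and since $(g,a)\mapsto ag$ can be up to $|A|$-to-one, the number of certified elements of $A^{k'+1}$ off $W$ collapses to $O(1)$, not $c|A|$. The same difficulty hides in the case $x\notin W$ that you defer: with $x=(0,1)$ and $A$ consisting mostly of antidiagonal matrices, $ax\in W$ for all but $O(1)$ elements of $A$, so that case is not an easy aside. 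The paper's proof sidesteps all of this by intersecting with a \emph{single} translate: fix one $g\in A$ with $gW\neq W$, set $W'=gW\cap W$, and note that any $g'$ with $g'x\notin W'$ gives either $g'x\notin W$ or $g^{-1}g'x\notin W$; this correspondence is at worst two-to-one, so the density constant merely halves at each inductive step.

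A second, more repairable, problem is your induction measure. The claim that $G\cdot x\not\subset W$ forces some maximal-dimensional component of $W$ to be non-invariant is not correct: all top-dimensional components can be $A$-invariant while the orbit of $x$ avoids them (as it must, by invariance), in which case $W'$ has the same pair $(\dim W,\#\{\text{top components}\})$ as $W$ and your induction stalls. The fix is the one the paper's sketch uses: since the orbit automatically avoids invariant components, you may discard them and escape only from the rest — but this has to be built into the argument rather than attributed to "the component involved in $x$'s orbit". Your control of the complexity of $\bigcap_{a\in A}a^{-1}W$ via B\'ezout and bounded descending chains is fine in principle; it is the density step and the stalling of the measure that need to be reworked, and the paper's one-translate, two-to-one device is the standard way to do both at once.
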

\begin{proof}[Proof for a special case]
Let us first do the special case of $W$ an irreducible linear subvariety.
We will proceed by induction on the dimension of 
$W$. If $\dim(W)=0$, then $W$ consists of a single point, and the statement is
clear: since
$G\cdot x \not\subset \{x\}$ and  $A$ generates $G$,
there exists a $g\in A$ such that 
$g x \ne x$; if there are fewer than $|A|/2$ such 
elements of $A$, we let $g_0$ be one of them, and note that any
product $g^{-1} g_0$ with $g x = x$ satisfies
$g^{-1} g_0 x \ne x$; there are $> |A|/2$ such products.

Assume, then, that $\dim(W)>0$, and that the statement has been proven for all
$W'$ with $\dim(W')<\dim(W)$. If $g W = W$ for all $g\in A$, then either
(a) $g x$ does not lie on $W$ for any $g\in A$, proving
the statement, or
(b) $g x$ lies on $W$ for every $g\in G = \langle A\rangle$, contradicting
the assumption. Assume that $g W \ne W$ for some $g\in A$; then $W'=gW\cap W$
is an irreducible linear variety with $\dim(W') < \dim(W)$. Thus,
by the inductive hypothesis, there are at least $\max(1, c' |A|)$ elements
$g'\in A^{k'}$ ($c'$, $k'$ depending only on $\dim(W')$) such that
$g' x$ does not lie on $W' = g W\cap W$. Hence, for each such $g'$, either 
$g^{-1} g' x$ or $g' x$ does not lie on $W$. We have thus proven the
statement with $c = c'/2$, $k = k'+1$.
\end{proof}

\begin{prob}
  Generalize the proof so that it works without the assumptions that $W$ be linear or irreducible. Sketch: work first towards removing the
  assumption of irreducibility. Let $W$ be the union
of $r$ components, not necessarily all of the same dimension. The intersection 
$W' = g W \cap W$ may also have several components, but no more than $r^2$;
this is what we meant by ``keeping the degree under control''.
Now pay attention to $d$, the maximum of the dimensions of the components
of a variety, and $m$, the number of components of maximal dimension.
Show that either (1) $d$ is lower for $W'= g W\cap W$ than for $W$, or
(2) $d$ is the same in both cases, but $m$ is lower for $W'$ than for $W$,
or (3)  $x$ does not lie in any component of $W$ of dimension $d$,
and thus we may work instead with $W$ with those components removed.
Use this fact to carry out the inductive process.

Now note that you never really used the fact that $W$ is linear. Instead
of keeping track of the number of components $r$, keep track of the sum of
their degrees. Control that using the generalized form
(\ref{eq:bezout}) of B\'ezout's theorem.
\end{prob}

\subsection{Dimensional estimates}
By a {\em dimensional estimate} we mean a lower or upper bound on an 
intersection of the form $A^k \cap V$, where $A\subset G(K)$,
$V$ is a subvariety of $G$ and $G/K$ is an algebraic group. As you
will notice, the bounds that we obtain will be meaningful when $A$ grows 
relatively slowly. However, no assumption on $A$ is made, other than that it
generate $G(K)$.

Of course, Proposition \ref{prop:huru} may already be seen as a dimensional
estimate of sorts, in that it tells us that $\gg |A|$ elements of
$A^k$, $k$ bounded, lie outside $W$. We are now
aiming at much stronger bounds; Proposition \ref{prop:huru} will be a useful
tool along the way.

We aim for the estimates whose most general form is as follows.
\begin{theorem}\label{thm:lp}
  Let $G<\GL_n$ be a simple linear algebraic group over a finite field $K$. Let
  $A\subset G(K)$ be a finite set of generators of $G(K)$. Assume
  $A=A^{-1}$, $e\in A$. Let $V$ be a pure-dimensional subvariety of $G$.
  Then
  \begin{equation}\label{eq:utur}
    |A\cap V(K)|\ll |A^k|^{\frac{\dim V}{\dim G}},\end{equation}
  where $k$ and the implied constant depend only on $n$ and on $\deg(V)$.
\end{theorem}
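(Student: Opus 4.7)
The plan is to prove Theorem~\ref{thm:lp} by induction on $d:=\dim V$, combining escape from subvarieties (Prop.~\ref{prop:huru}) with a dominance construction that exploits the simplicity of $G$. By B\'ezout, $V$ decomposes into boundedly many irreducible components of bounded degree, so I reduce to $V$ irreducible. The case $d=0$ is trivial since $|V(K)|\leq \deg V$; the case $d=\dim G$ forces $V=G$ by irreducibility and the bound reduces to $|A|\leq |A^k|$. So assume $0<d<\dim G$.

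Set $M:=\lceil \dim G/d\rceil$. The heart of the argument is to locate $g_1,\dotsc,g_M\in A^{k_0}$, with $k_0$ depending only on $n$ and $\deg V$, such that the morphism
\[
\mu:V^M\to G,\qquad (v_1,\dotsc,v_M)\mapsto (g_1 v_1 g_1^{-1})(g_2 v_2 g_2^{-1})\cdots (g_M v_M g_M^{-1})
\]
is dominant, with generic fiber of dimension $e:=Md-\dim G\in\{0,1,\dotsc,d-1\}$ and bounded degree. For a generic tuple in $G^M$ dominance does hold: the Zariski closure of the subsemigroup generated by all conjugates $gVg^{-1}$ is a closed, conjugation-invariant subvariety containing $V$, hence by simplicity of $G$ it is all of $G$, and generically the dimension of the product grows by $d$ at each step until it saturates. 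Therefore the ``bad locus'' $B\subsetneq G^M$ on which $\mu$ is not dominant is a proper subvariety of $G^M$ of complexity bounded in terms of $\deg V$ and $n$; Prop.~\ref{prop:huru}, applied to $G^M$ acting on itself by left multiplication with generators $(a,e,\dotsc,e),\dotsc,(e,\dotsc,e,a)$ for $a\in A$, produces a tuple in $(A^{k_0})^M\setminus B$.

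With $\mu$ in hand, two estimates compete. Since each factor $g_i v_i g_i^{-1}$ lies in $A^{2k_0+1}$, $\mu((A\cap V)^M)\subseteq A^{M(2k_0+1)}$ and hence $|\mu((A\cap V)^M)|\leq |A^{M(2k_0+1)}|$. Conversely $|\mu((A\cap V)^M)|\geq |A\cap V|^M/F$, where $F:=\max_x|\mu^{-1}(x)\cap(A\cap V)^M|$. Projection of a fiber to its first $M-1$ coordinates is injective, because $v_M$ is algebraically determined by $v_1,\dotsc,v_{M-1}$ and the image; the projection is cut out from $V^{M-1}$ by the closed condition $v_M\in V$, defining a subvariety $W_x\subseteq G^{M-1}$ of dimension $\leq e<d$. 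An inductive application of the dimensional estimate to $W_x$ inside the ambient semisimple group $G^{M-1}$ gives $F\ll|A^{k_1}|^{e/\dim G}$ for bounded $k_1$. Setting $k:=\max(k_1,M(2k_0+1))$,
\[
|A\cap V|^M\leq F\cdot |A^{M(2k_0+1)}|\ll |A^k|^{1+e/\dim G},
\]
and the identity $(1+e/\dim G)/M=(\dim G+e)/(M\dim G)=d/\dim G$ yields $|A\cap V|\ll |A^k|^{d/\dim G}$.

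The main obstacle is the fiber bound: the inductive step applies Theorem~\ref{thm:lp} inside the product $G^{M-1}$, not inside $G$, so a version of the theorem valid for subvarieties of a power $G^r$ of a simple algebraic group must be established in parallel, via a joint induction on dimension across such products. A subsidiary delicacy is verifying that $B$ is a proper subvariety of $G^M$ of controlled complexity; this hinges on a Chevalley-type argument that the Zariski closure of the subsemigroup generated by conjugates of $V$ is in fact a subgroup, combined with simplicity of $G$ and the positivity of $d$.
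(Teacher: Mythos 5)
The crux of your argument is the claim that, for $M=\lceil \dim G/\dim V\rceil$ and a generic tuple $(g_1,\dotsc,g_M)\in G^M$, the map $\mu(v_1,\dotsc,v_M)=\prod_i g_iv_ig_i^{-1}$ is dominant --- equivalently, that the dimension of a product of generic conjugates of $V$ grows by exactly $\dim V$ at each step until it saturates. The normal-closure/simplicity argument you give only shows that products of \emph{some} bounded number of conjugates of $V$ are Zariski-dense; it says nothing about how many are needed, and the ``grows by $\dim V$ per step'' claim is false in general. Take $G=\SL_n$ with $n\geq 3$ and $V=\overline{\Cl(g)}$ for $g$ a transvection, so that $\dim V=2n-2$ and $M=\lceil (n+1)/2\rceil<n$. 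Every element of a product of $M$ conjugates of $V$ has the form $I+X$ with $\rank(X)\leq M$, and $\{h\in \SL_n:\rank(h-I)\leq M\}$ has dimension at most $M(2n-M)-1<n^2-1=\dim G$ for $M<n$ (for $n=3$: $\dim V=4$, $M=2$, and the product of two conjugates lies in a $7$-dimensional variety inside the $8$-dimensional $G$). Hence $\mu$ is not dominant for \emph{any} choice of $g_1,\dotsc,g_M$, your ``bad locus'' $B$ is all of $G^M$, escape cannot produce a good tuple, and the induction collapses: one can reach full dimension by taking more factors (dimension grows by at least one per step, as in Lemma~\ref{lem:tush}), but then the generic fibre has dimension $r\dim V-\dim G\geq\dim V$ and your induction on $\dim V$ no longer closes. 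The theorem is true for this $V$; it is your route to it that breaks.

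This is precisely the difficulty that makes the general statement delicate, and it is why the paper proves in full only the two cases it needs: curves (Prop.~\ref{prop:cheyen}), where $\dim G$ interleaved factors and growth by one dimension per step (Lemma~\ref{lem:tush}, via simplicity of $\mathfrak{g}$) give zero-dimensional generic fibres, and the two-dimensional trace varieties in $\SL_2$ (Prop.~\ref{prop:juru}), handled through the map $(y_1,y_2)\mapsto y_1y_2^{-1}$, whose fibres are curves --- i.e., by inducting on the dimension of the fibres rather than by forcing the image to have the ``expected'' dimension. Your reduction to irreducible components, the use of escape, the counting with bounded fibre degree, the passage to $G^{M-1}$ with generating set $A\times\dotsb\times A$, and the exponent bookkeeping $(1+e/\dim G)/M=\dim V/\dim G$ are all fine; the missing idea is what to do when no bounded product of conjugates of $V$ is dominant with fibre dimension $<\dim V$. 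Handling images and fibres of unexpected dimension is exactly what the Larsen--Pink-type stratification arguments in \cite{BGT} and \cite{MR3402696} are for, and it is the part your proposal elides.
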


Estimates of this form can be traced in part to
\cite{LP} ($A$ a subgroup, $V$ general) and in part to \cite{Hel08}
y \cite{HeSL3} ($A$ an arbitrary set, but $V$ special).
We now have Theorem \ref{thm:lp}, thanks to 
\cite{BGT} and \cite{MR3402696}.
In fact, \cite{MR3402696} gives a
more general statement, in that twisted
groups of Lie type are covered. Actually, one can state Theorem \ref{thm:lp}
in an even more general form, in that the assumption that $K$ is finite can
be dropped, and the condition that $A$ generate $G(K)$ can be replaced
by a condition that $\langle A\rangle$ be ``Zariski-dense enough'', meaning
not contained in a union of $\leq C$ varieties of degree $\leq C$, where
$C$ depends only on $n$ and $\deg(V)$.

We will show how to prove the estimate (\ref{eq:utur}) in the
case we actually need, but in a way that can be generalized to arbitrary
$V$ and arbitrary simple $G$. We will give a detailed outline of
how to obtain the generalization.

Actually, as a first step towards the general strategy, let us study a
particular $V$ that we will not use in the end; it was crucial in earlier
versions of the proof, and, more importantly, it makes several of the
key ideas clear quickly. The proof is basically the same as in 
\cite[\S 4]{Hel08}. In particular, it will not look as if we used any
algebraic geometry; however, the concrete procedure
we follow here will then lead us naturally to a general procedure that will ask for the language and the basic tools of algebraic geometry.

 \begin{lem}\label{lem:rokto}
   Let $G = \SL_2$, $K$ a field. Let $A\subset G(K)$ be a finite set of
   generators of $G(K)$. Assume $A=A^{-1}$, $e\in A$. Let $T$
   be a maximal torus of $G$. Then
\begin{equation}\label{eq:hobo}|A \cap T(K)| \ll |A^k|^{1/3},\end{equation}
where $k$ and the implied constant are absolute.
 \end{lem}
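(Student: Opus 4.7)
Let $N := |A \cap T(K)|$; the aim is to find an absolute $k$ with $|A^k| \gg N^3$, which is equivalent to the claimed bound. The strategy is to exhibit $\gg N^3$ distinct elements in a bounded power of $A$ as products $stu$ with $s, t, u$ drawn from three conjugate maximal tori in ``general position''.

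By escape from subvarieties (Proposition~\ref{prop:huru}), applied for instance to the action of $G \times G$ on itself by translation, there exist an absolute constant $k_0$ and elements $g_1, g_2 \in A^{k_0}$ such that the three Lie subalgebras $\mathfrak{t}, \mathfrak{t}_1 := \Ad(g_1)\mathfrak{t}, \mathfrak{t}_2 := \Ad(g_2)\mathfrak{t}$ span $\mathfrak{sl}_2$. Indeed, linear independence of these three lines in $\mathfrak{sl}_2$ is a non-empty Zariski-open condition on $(g_1, g_2) \in G \times G$, so escape applies since $A$ generates $G(K)$. Writing $T_i := g_i T g_i^{-1}$, the inclusion $g_i(A \cap T)g_i^{-1} \subseteq A^{2k_0+1} \cap T_i$ gives $|A^{2k_0+1} \cap T_i| \geq N$ for $i=1,2$.

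Consider the multiplication morphism
\[\mu \colon T \times T_1 \times T_2 \longrightarrow G, \qquad (s,t,u) \longmapsto s t u.\]
Its differential at $(e,e,e)$ is the sum map $\mathfrak{t} \oplus \mathfrak{t}_1 \oplus \mathfrak{t}_2 \to \mathfrak{g}$, which is surjective by construction; since source and target are both three-dimensional, $\mu$ is dominant and generically finite. By B\'ezout's theorem, as in~\eqref{eq:bezout}, the generic fiber of $\mu$ has degree bounded by some absolute constant $C$, so there is a proper subvariety $B \subsetneq G$ outside of which every fiber has at most $C$ points. Setting
\[V := (A^{2k_0+1} \cap T) \times (A^{2k_0+1} \cap T_1) \times (A^{2k_0+1} \cap T_2),\]
a set of size at least $N^3$, the image $\mu(V)$ is contained in $A^{6k_0+3}$; if all but $o(N^3)$ of the triples in $V$ map outside $B$, then $|A^{6k_0+3}| \geq |\mu(V)| \gg N^3/C$, yielding the desired $N \ll |A^k|^{1/3}$.

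The main obstacle is bounding $|V \cap \mu^{-1}(B)|$. Since $\mu^{-1}(B)$ is a proper subvariety of $T \times T_1 \times T_2$ of dimension at most $2$, one expects $|V \cap \mu^{-1}(B)| \ll N^2$, which is negligible once $N$ exceeds an absolute constant. Making this rigorous is essentially a lower-dimensional instance of the very estimate we are trying to prove; it can be handled either by induction on dimension, or by strengthening the choice of $(g_1, g_2)$ via a further application of escape, so as to ensure that the triple $(e,e,e)$ lies in the smooth-and-finite locus of $\mu$ and that the bad fibers are confined to a subvariety avoided by most of our product set.
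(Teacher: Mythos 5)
Your overall strategy (pump three conjugate maximal tori into a bounded power of $A$ through a generically finite multiplication map) is close in spirit to the paper's, but as written it has two genuine gaps. First, the step you flag yourself is the actual heart of the matter, and neither of your proposed fixes closes it. You need $|V\cap\mu^{-1}(B)|\ll N^2$, where $B$ is the locus of large fibres; this is not ``a lower-dimensional instance of the very estimate we are trying to prove'' (it requires no group structure and no generation hypothesis), but it is also not free: it is a Schwartz--Zippel-type counting statement for a product set $S_0\times S_1\times S_2$ sitting on a product of three curves against a $2$-dimensional subvariety, in the spirit of Exercise~\ref{prob:hejhoj}, and it must be proved, with the degree of $\mu^{-1}(B)$ controlled via (\ref{eq:bezout}). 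Your alternative fix --- ``strengthening the choice of $(g_1,g_2)$ via a further application of escape so that the bad fibres are confined to a subvariety avoided by most of our product set'' --- cannot work as stated: escape (Proposition~\ref{prop:huru}) only lets you choose $g_1,g_2$ outside a fixed subvariety; it gives you no control over where the \emph{given} set $A\cap T(K)$ lies, so you cannot arrange that $V$ avoids $\mu^{-1}(B)$. The paper's proof is engineered precisely to make this step trivial: for the map $(x,y,z)\mapsto x\,gyg^{-1}\,z$ with $abcd\neq 0$, an explicit matrix computation shows every fibre has at most $16$ points once the middle coordinate avoids at most $4$ explicit bad values, so the bad triples number at most $4N^2$ with no further algebraic geometry.

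Second, your verification that the map is dominant breaks down in characteristic $2$, which the lemma (stated for an arbitrary field $K$, and needed over $\mathbb{F}_{2^\alpha}$ for the main theorem) must cover. In characteristic $2$ the Lie algebra $\mathfrak{t}$ of a maximal torus of $\SL_2$ consists of scalar matrices, i.e.\ it is the centre of $\mathfrak{sl}_2$ and is fixed pointwise by every $\Ad_g$; hence $\mathfrak{t}$, $\Ad_{g_1}\mathfrak{t}$, $\Ad_{g_2}\mathfrak{t}$ coincide for all $g_1,g_2$, and your ``non-empty Zariski-open condition'' is empty. (This is the same small-characteristic phenomenon the paper warns about: $\mathfrak{sl}_n$ is not simple when $p\mid n$.) So even granting the counting step, the differential criterion does not establish generic finiteness of $\mu$ in characteristic $2$; you would need either a direct argument for dominance there or, as in the paper, an explicit computation that never passes through tangent spaces and is valid in every characteristic. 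With those two repairs the approach can be made to work for $\mathrm{char}(K)\neq 2$, but as it stands the proof is incomplete.
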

 \begin{proof}
   We can assume without loss of generality that
   $|K|$ and $|A|$ are greater than a constant,
   as otherwise the statement is trivial.
   We can also write the elements of $T$ as diagonal matrices, by conjugation
   by an element of $\SL_2(\overline{K})$.

  Let \begin{equation}\label{eq:murut}
    g=\left(\begin{matrix} a & b\\ c & d\end{matrix}\right)\end{equation}
be any element of $\SL_2(\overline{K})$ with
    $a b c d\ne 0$. Consider the map
  $\phi:T(K) \times T(K) \times T(K) \to G(K)$ given by
\[\phi(x, y, z) = x \cdot g y g^{-1} \cdot z.\]
We would like to show that this map is in some sense almost injective.
(What for? If the map were injective, and we had
  $g\in A^\ell$, $\ell$ bounded by a constant, we would have
 \[|A\cap T(K)|^3 = |\phi(A\cap T(K),A\cap T(K),A\cap T(K))| 
 \leq |A A^\ell A A^{-\ell} A| = |A^{2\ell+3}|,\]
 which would imply immediately the result we are trying to prove.
 Here we are simply using the fact that the image $\phi(D)$ of an injection
 $\phi$ has the same number of elements as the domain $D$.)
 
Multiplying matrices, we see that, for
\[x=\left(\begin{matrix} r & 0 \\0 & r^{-1}\end{matrix}\right),\;\;
y = 
\left(\begin{matrix} s & 0 \\0 & s^{-1}\end{matrix}\right),\;\;
z = 
  \left(\begin{matrix} t & 0 \\0 & t^{-1}\end{matrix}\right),\]
$\phi((x,y,z))$  equals
\begin{equation}\label{eq:malvot}  \left(\begin{matrix} r t (s a d - s^{-1} b c) &
    r t^{-1} (s^{-1} - s) a b\\
    r^{-1} t (s - s^{-1}) c d &
r^{-1} t^{-1} (s^{-1} a d - s b c)\end{matrix}\right).\end{equation}
Let $s\in \overline{K}$ be such that $s^{-1} - s \ne 0$ and
$s a d - s^{-1} b c \ne 0$.
A brief calculation shows then that
$\phi^{-1}(\{\phi((x,y,z))\})$ has at most $16$ elements: we have
\[  r t^{-1} (s^{-1} - s) a b \cdot 
r^{-1} t (s - s^{-1}) c d  = - (s - s^{-1})^2 a b c d,\]
and, since $a b c d \ne 0$, at most $4$ values of $s$
can give the same value $- (s-s^{-1})^2 a b c d$
(the product of the top right and bottom left entries of
((\ref{eq:malvot})); for each such value of $s$,
the product and the quotient of the upper left and upper right entries of
(\ref{eq:malvot}) determine $r^2$ and $t^2$, respectively,
and obviously there are at most $2$ values of $r$ and $2$ values of $t$
for $r^2$, $t^2$ given.

Now, there are at most $4$ values of $s$ such that
    $s^{-1} - s = 0$ or $s a d - s^{-1} b c = 0$. Hence,
    \[|\phi(A\cap T(K),A\cap T(K),A\cap T(K))| \geq
    \frac{1}{16} |A\cap T(K)| (|A\cap T(K)| - 4) |A\cap T(K)|,\]
    and, at the same time, $\phi(A\cap T(K),A\cap T(K),A\cap T(K))\subset
    A A^\ell A A^{-\ell} A = A^{3+2\ell}$, as we said before. If
    $|A\cap T(K)|$ is less than $8$ (or any other constant),
    conclusion (\ref{eq:hobo}) is trivial. Therefore,
        \[|A\cap T(K)|^3 \leq 2 |A\cap T(K)| (|A\cap T(K)| - 4) |A\cap T(K)|
    \leq 32 |A^{2\ell+3}|,\]
    i.e., (\ref{eq:hobo}) holds.

    It only remains to verify that there exists an element
    (\ref{eq:murut}) of $A^\ell$ with $a b c d\ne 0$. Now,
    $a b c d = 0$ defines a subvariety $W$ of
    $\mathbb{A}^4$, where $\mathbb{A}^4$ is identified with the space
    of $2$-by-$2$ matrices. Moreover, for $|K|>2$, there are elements
    of $G(K)$ outside that variety. Hence, the conditions of
    Prop.~\ref{prop:huru} hold (with $x=e$).
    Thus, we obtain that there is a
    $g\in A^\ell$ ($\ell$ a constant)
    such that $g\not\in W(K)$, and that was what we needed.
    \end{proof}

 Let us abstract the essence of what we have just done, so that we can
 then generalize the result to an arbitrary variety $V$ instead of working
 just with $T$. For the sake of convenience, we will do the case
 $\dim V = 1$, which is, at any rate, the case we will need.
 The strategy of the proof of Lemma \ref{lem:rokto} is to construct a
 morphism $\phi:V\times V\times \dotsb \times V\to G$
($r$ copies of $V$, where $r = \dim(G)$) of the form
\begin{equation}\label{eq:kukuku}
  \phi(v_1,\dotsc,v_{r}) = v_1 g_1 v_2 g_2 \dotsb v_{r-1} g_{r-1} v_r,\end{equation}
where $g_1,g_2,\dotsc,g_{r-1}\in A^\ell$, in such a way that, for
$v = (v_1,\dotsc,v_{r})$ a generic
point in $V\times V\times \dotsb \times V$, the preimage
$\phi^{-1}(\{\phi(v)\})$ has dimension $0$. Actually, as we have just seen,
it is enough to prove that this is true for $(g_1,g_2,\dotsc,g_{r-1})$
a generic element of $G^{r-1}$; the escape argument (Prop.~\ref{prop:huru}) takes care of the rest.

The following lemma is the same as \cite[Prop. 5.5.3]{MR3309986},
which, in turn, is the same as \cite[Lemma 4.5]{LP}.
We will give a proof valid for $\mathfrak{g}$ simple.

\begin{lem}\label{lem:tush}
  Let $G<\SL_n$ be a simple algebraic group defined over
  a field $K$.
  Let $V, V'\subsetneq G$ be irreducible subvarieties with $\dim(V)<\dim(G)$
  and $\dim(V')>0$.
  Then, for every
  $g\in G(\overline{K})$ outside a subvariety $W\subsetneq G$ depending
  on $V$ and $V'$,
the variety $\overline{V g V'}$ has dimension $>\dim(V)$.

  Moreover, the number and degrees of the irreducible components of $W$
  are bounded by a constant that depends only on $n$ and $\deg(V)$ and
  $\deg(V')$.
\end{lem}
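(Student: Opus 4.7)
The plan is to estimate the rank of the differential of the morphism $\mu_g : V \times V' \to G$, $(v,v') \mapsto v g v'$, at a smooth point $(v_0,v_0')$, and to show that for $g$ outside a proper subvariety $W$, this rank exceeds $\dim V$; the standard inequality between the rank of the differential at a smooth point and the dimension of the image (cf.\ the exercise at the end of \S\ref{subsub:morph}) then yields $\dim \overline{V g V'} > \dim V$. Pick nonsingular points $v_0 \in V$ and $v_0' \in V'$, which exist because both varieties are irreducible. Let
\[
\mathfrak{v} := T_{v_0} V \cdot v_0^{-1}, \qquad \mathfrak{v}' := T_{v_0'} V' \cdot v_0'^{-1},
\]
viewed as subspaces of $\mathfrak{g} = T_e G$ of dimensions $\dim V$ and $\dim V'$ respectively.

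A direct computation with the product rule gives $d\mu_g(\xi,\eta) = \xi g v_0' + v_0 g \eta$ for $\xi \in T_{v_0} V$, $\eta \in T_{v_0'} V'$; right-translating by $(v_0 g v_0')^{-1}$ identifies the image with $\mathfrak{v} + \Ad_{v_0 g}(\mathfrak{v}') \subset \mathfrak{g}$. Hence the rank of $d\mu_g$ at $(v_0,v_0')$ equals $\dim(\mathfrak{v} + \Ad_{v_0 g}(\mathfrak{v}'))$, which is strictly greater than $\dim V$ precisely when $\Ad_{v_0 g}(\mathfrak{v}') \not\subset \mathfrak{v}$. Set
\[
W := \{g \in G : \Ad_{v_0 g}(\mathfrak{v}') \subset \mathfrak{v}\};
\]
this is closed in $G$, being cut out by the polynomial conditions $\Ad_{v_0 g}(\eta_i) \in \mathfrak{v}$ as $\eta_i$ runs over a basis of $\mathfrak{v}'$ (with $\mathfrak{v}$ itself defined by linear forms).

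The main step, where simplicity enters, is to show $W \subsetneq G$. Suppose not. Since $g \mapsto v_0 g$ is a bijection of $G$, the assumption forces $\Ad_h(\mathfrak{v}') \subset \mathfrak{v}$ for every $h \in G$. The linear span $\mathfrak{w}$ of $\bigcup_{h \in G} \Ad_h(\mathfrak{v}')$ is then an $\Ad(G)$-invariant subspace of $\mathfrak{v}$; differentiating the adjoint representation at $e$ shows $\mathfrak{w}$ is $\ad(\mathfrak{g})$-invariant, i.e., an ideal of $\mathfrak{g}$. Since $\mathfrak{g}$ is simple, $\mathfrak{w} \in \{0,\mathfrak{g}\}$: the first case gives $\mathfrak{v}' = 0$, violating $\dim V' > 0$, while the second gives $\mathfrak{v} = \mathfrak{g}$, violating $\dim V < \dim G$. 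Contradiction.

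For the quantitative claim, the defining equations of $W$ are polynomials in the entries of $g$ whose degrees are bounded in terms of $n$ (the entries of $(v_0 g)^{-1}$ are polynomials of degree $\leq n-1$ in those of $g$ by Cramer's rule, since $G \subset \SL_n$) and whose number depends on $\dim V'$ and $\dim G - \dim V$, all of which are controlled by $n$, $\deg V$ and $\deg V'$; the generalized B\'ezout inequality (\ref{eq:bezout}) then bounds the number and degrees of irreducible components of $W$. The principal obstacle is the passage from $\Ad(G)$-invariance to being an ideal via differentiation, together with the reliance on simplicity of $\mathfrak{g}$ (rather than merely of $G$), which is exactly why the excerpt flags that this lemma is proved ``valid for $\mathfrak{g}$ simple''.
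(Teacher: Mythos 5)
Your proof is correct and follows essentially the same route as the paper's: compare the tangent-space image $\mathfrak{v} + \Ad_{h}\mathfrak{v}'$ with $\mathfrak{v}$, use simplicity of $\mathfrak{g}$ to show the exceptional locus is proper (an $\Ad$-invariant proper subspace would be a nonzero ideal), and cut out $W$ by bounded-degree polynomial conditions controlled via B\'ezout. The only cosmetic differences are that the paper first translates $V,V'$ so that the identity is a smooth point of each (so $\Ad_g$ appears instead of $\Ad_{v_0 g}$), whereas you work at arbitrary smooth points and make the rank-of-the-differential step explicit via the exercise in \S\ref{subsub:morph}.
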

In fact, the proof we will now see bounds the number and degrees
of the components of $W$ in terms of
$n$ alone.
\begin{proof}[Proof for $\mathfrak{g}$ simple]
  We can assume without loss of generality -- replacing $V$ and $V'$ by
  varieties $V h$ and $h' V$, $h,h'\in G(\overline{K})$, if necessary -- that $V$ and $V'$ go through
  the origin, and that the origin is a non-singular point for $V$ and $V'$.
We may also assume without loss of generality that $K$ is algebraically closed.
  
  Let $\mathfrak{v}$ and $\mathfrak{v}'$ be the tangent spaces to $V$ and $V'$
  at the origin. The tangent space to
  $\overline{V g V'} g^{-1}$ at
  the identity is
  $\mathfrak{v} + \Ad_g \mathfrak{v}'$. Thus, for $\overline{V' g V}$
  to have dimension
  $> \dim(V)$, it is enough that $\mathfrak{v} + \Ad_g \mathfrak{v}'$ have
  dimension $> \dim(\mathfrak{v}) = \dim(V)$.

  Suppose that this is not the case for any $g$ on $G$. Then the space
  $\mathfrak{w}$ spanned
  by all spaces $\Ad_g \mathfrak{v}'$, for all $g$, is contained in
  $\mathfrak{v}$. Since $\dim(V)<\dim(G)$,
  $\mathfrak{v} \subsetneq \mathfrak{g}$.
  Clearly, $\mathfrak{w}$ is non-empty and invariant
  under $\Ad_g$ for every $g$. Hence it is an ideal. However, we are assuming
  $\mathfrak{g}$ to be simple. Contradiction.

  Thus, $\mathfrak{v} + \Ad_g \mathfrak{v}'$ has
  dimension greater than $\dim(\mathfrak{v})$ for some $g$. It is easy to
  see that the points $g$
  where that is not the case are precisely those such that all
  $(\dim(\mathfrak{v})+1)\times (\dim(\mathfrak{v})+1)$ minors
  of a matrix -- whose entries are polynomial on the entries of $g$ -- vanish.
  We let $W$ be the subvariety of $V$ where those minors all vanish.
  The claim on the number and degrees of components of $W$ follows by
  B\'ezout (\ref{eq:bezout}).
\end{proof}

We can now generalize our proof of Lemma \ref{lem:rokto}, and thus prove
(\ref{eq:utur}) for all varieties of dimension $1$. Before we start,
we need a basic counting lemma, left as an exercise.

\begin{prob}\label{prob:hejhoj}
  Let $W\subset \mathbb{A}^n$ be a variety defined over $K$ such that
  every component of $W$ has dimension $\leq d$. Let $S$ be a finite
  subset of $K$.
  Then the number of points $(x_1,\dotsc,x_n) \in S\times S \times \dotsb
  \times S$ ($n$ times) lying
  on $W$ is $\ll |S|^d$, where the implied constant depends only on $n$
  and on the number and degrees of the components of $W$.
\end{prob}

\begin{prop}\label{prop:cheyen}
  Let 
  $G\subset \SL_n$ be an simple algebraic group over a finite field
  $K$. Assume that
  $|G(K)|\geq c |K|^{\dim(G)}$, $c>0$.
  Let $Z\subset G$ be a variety of dimension $1$.
  Let $A\subset G(K)$ be a set of generators of $G(K)$ such that
  $A=A^{-1}$, $e\in A$.

  Then
    \begin{equation}\label{eq:clada}
      |A\cap Z(K)|\ll |A^k|^{1/\dim(G)},\end{equation}
    where $k$ and the implied constant depend only on $n$, $c$,
    $\deg(G)$ and the
    number and degrees of the irreducible components of $Z$.
    \end{prop}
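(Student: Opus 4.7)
The plan is to mimic the proof of Lemma \ref{lem:rokto}: build a morphism
\[
  \phi(v_1, \dots, v_r) = v_1\, g_1\, v_2\, g_2 \cdots g_{r-1}\, v_r,\qquad v_i \in Z,\; g_i \in A^\ell,
\]
with $r = \dim G$, that is dominant $Z^r \to G$ with generic fiber of dimension zero, and then use fiber counts to almost-inject $(A \cap Z(K))^r$ into $A^k$. I may assume $Z$ is irreducible (otherwise handle each component separately) and that $|A \cap Z(K)|$ exceeds a constant depending on the data, for else the claim is trivial.

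First I would construct the $V_i$'s inductively: set $V_0 = Z$, and given $V_{i-1}$ irreducible of dimension $i$, apply Lemma \ref{lem:tush} with $V = V_{i-1}$, $V' = Z$ to obtain a proper subvariety $W_i \subsetneq G$ such that for any $g_i \notin W_i$, $\overline{V_{i-1}\, g_i\, Z}$ has dimension $> i$; take $V_i$ to be an irreducible component of maximal dimension, so $\dim V_i \geq i+1$. B\'ezout (\ref{eq:bezout}) propagates the degree bounds, so $\deg V_i$ and $\deg W_{i+1}$ stay controlled in terms of $n$, $\deg G$, and $\deg Z$ alone. After $r-1$ steps, $V_{r-1}$ is an irreducible subvariety of $G$ of dimension $\dim G$; as $G$ is simple and hence irreducible, $V_{r-1} = G$, and therefore $\overline{\phi(Z^r)} \supseteq V_{r-1} = G$.

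To realize each $g_i$ inside $A^\ell$ for a uniform $\ell$, I would apply Proposition \ref{prop:huru} to the left-multiplication action of $G$ on itself (embedded in $\mathbb{A}^{n^2}$) starting from $x = e$. The hypothesis $|G(K)| \geq c|K|^{\dim G}$, together with the elementary bound $|W_i(K)| \ll |K|^{\dim G - 1}$, ensures that once $|K|$ exceeds a constant depending only on $c$ and the degree data, the orbit $G(K)$ is not contained in $W_i(K)$, so escape yields $g_i \in A^\ell$ outside $W_i$. For $|K|$ below that threshold, $|G(K)|$ is bounded and the statement is trivial.

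With $\phi$ dominant, Section \ref{subsub:morph} shows that its generic fiber is zero-dimensional and of size at most some $M = O(1)$. Let $Z_1^r \subset Z^r$ be the closed proper subvariety where $\dim \phi^{-1}(\phi(v)) > 0$, so $\dim Z_1^r \leq r-1$. Projecting $Z_1^r$ onto $Z^{r-1}$ and inducting on $r$ (using that the projection is generically finite when it is dominant, and otherwise confines $Z_1^r$ to $B \times Z$ for a proper $B \subset Z^{r-1}$) gives
\[
  \bigl| (A \cap Z(K))^r \cap Z_1^r(K) \bigr| \ll |A \cap Z(K)|^{r-1}.
\]
Outside $Z_1^r$ each $\phi$-fiber has at most $M$ $K$-points, and $\phi\bigl((A \cap Z(K))^r\bigr) \subset A^{r + (r-1)\ell} =: A^k$, whence
\[
  |A \cap Z(K)|^r \leq M\, |A^k| + O\bigl(|A \cap Z(K)|^{r-1}\bigr),
\]
giving $|A \cap Z(K)| \ll |A^k|^{1/r}$ as required. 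The main obstacle is the uniform bookkeeping: one must keep $\deg V_i$, hence $\deg W_{i+1}$, controlled through the iteration, and carry out the projection/induction argument for $Z_1^r$ carefully enough to turn Exercise \ref{prob:hejhoj} (which is stated for affine space and a product set) into a usable bound for the non-product set $(A \cap Z(K))^r$ sitting inside $Z^r$.
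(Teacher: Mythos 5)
Your proposal is correct and follows essentially the same route as the paper: iterate Lemma \ref{lem:tush} with $V'=Z$ to make $\overline{Z g_1 Z \dotsb g_{r-1} Z}$ all of $G$, realize the $g_i$ in a bounded power of $A$ via escape (Prop.~\ref{prop:huru}) using $|G(K)|\geq c|K|^{\dim G}$ and the point-count of Exercise \ref{prob:hejhoj}, and then count fibers of $\phi(z_1,\dotsc,z_r)=z_1g_1\dotsb g_{r-1}z_r$ to get $|A\cap Z(K)|^r \ll |A^k| + |A\cap Z(K)|^{r-1}$. The only (harmless) deviation is that you excise the locus of positive-dimensional fibers rather than the singular locus of $\phi$ as in the paper, which if anything is slightly more robust in positive characteristic, and both write-ups leave the same bookkeeping (degree control and the adaptation of Exercise \ref{prob:hejhoj} to products of subsets of the curve $Z$) at the level of a sketch.
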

Obviously, $G = \SL_n$ is a valid choice, since it is simple and
$|\SL_n(K)|\gg |K|^{n^2-1} = |K|^{\dim(G)}$.
\begin{proof}
  We will use Lemma \ref{lem:tush} repeatedly. When we apply it,
  we get a subvariety $W\subsetneq G$ such that, for every $g$
  outside $W$, some component of $\overline{V g V'}$ has dimension
  $>\dim(V)$ (where $V$ and $V'$ are varieties satisfying the conditions
  of Lemma \ref{lem:tush}). Since $G$ is irreducible,
  every component of $W$ has dimension less than $\dim(G)$.
  By Exercise \ref{prob:hejhoj} (with $S=K$)
  and the assumption $|G(K)|\geq c |K|^{\dim(G)}$, 
  there is at least one point of $G(K)$ not on $W$, provided that $|K|$
  is larger than a constant, as we can indeed assume. Hence, we can
  use escape from subvarieties (Prop.~\ref{prop:huru}) to show that
  there is a $g\in (A\cup A^{-1} \cup \{e\})^\ell$, where $\ell$ depends
  only on the number and degrees of components of $W$, that is to say -- by
  Lemma \ref{lem:tush} -- only on $n$ and $\deg(G)$.

  So: first, we apply Lemma \ref{lem:tush} with $V=V'=Z$; we obtain
  a variety $V_2 = \overline{V g_1 V'} = \overline{Z g_1 Z}$ with
  $g\in (A\cup A^{-1} \cup \{e\})^\ell$ such that $V_2$ has at least
  one component of dimension $2$. (We might as well assume $V$ is
  irreducible from now on; then $V_2$ is irreducible.)
  We apply Lemma \ref{lem:tush} again with $V=V_2$, $V'=Z$, and obtain
  a variety $V_3 = \overline{V_2 g_2 Z} = \overline{Z g_1 Z g_2 Z}$
  of dimension $3$. We go on and on, and get that there are
  $g_1,\dotsc,g_{m-1}\in (A\cup A^{-1} \cup \{e\})^{\ell'}$, $r=\dim(G)$, such that
  $\overline{Z g_1 Z g_2 \dotsc Z g_{r-1} Z}$ has dimension $r$.

  Hence, the variety $W$ of singular points of the
  map $f$ from $Z^r=Z\times Z\times \dotsb \times Z$ ($r$ times) to
  $G$ given by
  \[f(z_1,\dotsc,z_m) = z_1 g_1 z_2 g_2 \dotsc z_{r-1} g_{r-1} z_r\]
  cannot be all of $Z\times \dotsc \times Z$.
  Thus, since $Z\times \dotsc \times Z$ is irreducible, every component of
  $W$ is of dimension less than $\dim V$.
    Again by Exercise \ref{prob:hejhoj} (with $S = A\cap Z(K)$),
    at most $O(|A\cap Z(K)|^{r-1})$ points
    of $(A\cap Z(K))\times \dotsb \times (A\cap Z(K))$ ($r$ times)
    on $W$. The number of points of $(A\cap Z(K))\times \dotsb \times
    (A\cap Z(K))$ not on $W$ is at most the degree of $f$ times the number of points
    on $f(A\cap Z(K),\dotsc,A\cap Z(K))$, which is contained in
    $A^k$ for $k = {r+(r-1) \ell'}$. Therefore,
    \[|A\cap Z(K)|^r \leq
    \deg(f) |A^k| + O\left(|A\cap Z(K)|^{r-1}\right),\]
    and so we are done.
\end{proof}

In general, one can prove (\ref{eq:utur}) for $\dim(V)$ arbitrary
using very similar arguments, together with an induction on the dimension
of the variety $V$ in (\ref{eq:utur}). We will demonstrate the basic
procedure doing things in detail for $G=\SL_2$ and for the kind of variety
$V$ for which we really need to prove estimates.

We mean the variety 
$V_t$ defined by
\begin{equation}\label{eq:schlingue}
  \det(g) = 1, \tr(g) = t
    \end{equation}
for $t\ne \pm 2$. Such varieties are of interest to us because, for any
regular semisimple $g\in \SL_2(K)$ (meaning: any matrix in
$\SL_2(K)$ having two distinct eigenvalues), the conjugacy class
  $\Cl(g)$ is contained in $V_{\tr(g)}$.


  \begin{prop}\label{prop:juru}
    Let $K$ be a finite field.
    Let $A\subset \SL_2(K)$ be a set of generators of
    $\SL_2(K)$ with $A=A^{-1}$, $e\in A$.
    Let $V_t$ be given by (\ref{eq:schlingue}).
    
    Then, for every $t\in K$ other than $\pm 2$,
    \begin{equation}\label{eq:terka}
      |A\cap V_t(K)|\ll |A^k|^{\frac{2}{3}},
    \end{equation}
    where $k$ and the implied constant are absolute.
  \end{prop}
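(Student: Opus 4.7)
The variety $V_t$ is cut out of $\SL_2$ by the single equation $\tr(g)=t$, hence $\dim V_t = 2$, and for $t\ne\pm 2$ it is irreducible over $\overline{K}$: writing matrix entries $a,b,c,d$ with $d=t-a$ realizes $V_t$ as the smooth affine quadric $bc+a^2-ta+1=0$ in $\mathbb{A}^3$. My plan is to mimic the proof of Proposition~\ref{prop:cheyen}, but with a map from the product $V_t\times V_t$---rather than from a product of curves---down to $\SL_2$. Precisely, I would choose $g\in A^\ell$ (for some absolute $\ell$) and consider
\[
\phi_g\colon V_t\times V_t \to \SL_2, \qquad \phi_g(x,y) = xgy.
\]
Since $\dim(V_t\times V_t)=4>3=\dim\SL_2$, once $\phi_g$ is dominant its generic fibers are $1$-dimensional, and Proposition~\ref{prop:cheyen} will bound the $A$-points on each; this converts the $1$-dimensional estimate into a $2$-dimensional one.

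Dominance of $\phi_g$ for a suitable $g\in A^\ell$ is produced exactly as in Proposition~\ref{prop:cheyen}: Lemma~\ref{lem:tush} with $V=V'=V_t$ yields a proper subvariety $W\subsetneq\SL_2$ outside which $\overline{V_t g V_t}$ has dimension $>2$, hence equals $3$, and escape (Proposition~\ref{prop:huru}) supplies such a $g$ inside $A^\ell$. Using that $V_t^{-1}=V_t$ in $\SL_2$ (since $\tr(v^{-1})=\tr(v)$ for $v\in\SL_2$), the equation $xgy=h$ with $x,y\in V_t$ is equivalent to $y=g^{-1}x^{-1}h$ together with $x\in V_t\cap hV_tg^{-1}$, so every fiber of $\phi_g$ identifies with the subvariety $Z_h:=V_t\cap hV_tg^{-1}\subset \SL_2$, whose components have degree $O(1)$ by B\'ezout~\eqref{eq:bezout}. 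The fiber $Z_h$ fails to be $1$-dimensional precisely when $V_t=hV_tg^{-1}$; this amounts to the affine function $F(v):=\tr(h^{-1}vg)-t$ vanishing on $V_t$, and since $V_t$ is cut out in $\mathbb{A}^4$ by the quadric $\det v-1$ and the linear form $\tr v - t$ while $F$ is affine, $F$ must be a scalar multiple $c(\tr v - t)$. Matching linear and constant parts forces $gh^{-1}=cI$ and $(1-c)t=0$, reducing the exceptional locus to $B_g\subseteq\{g,-g\}$---at most $2$ points.

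Writing $A'=A\cap V_t(K)$, the proof concludes by counting
\[
|A'|^2 \;=\; \sum_{h\in\phi_g(A'\times A')} \bigl|\phi_g^{-1}(h)\cap(A'\times A')\bigr|.
\]
For $h\in\phi_g(A'\times A')\setminus B_g$, the fiber is parameterized by $A\cap Z_h(K)$ with $Z_h$ a curve of bounded degree; Proposition~\ref{prop:cheyen} then bounds each such fiber by $\ll|A^k|^{1/3}$, and summing over the at most $|A^{\ell+2}|$ good values of $h$ yields $|A^{\ell+2}|\cdot O(|A^k|^{1/3})\ll|A^K|^{4/3}$. For $h\in B_g$ the trivial per-fiber bound $|A'|$ applies, and with $|B_g|\leq 2$ this contributes $O(|A'|)$. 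Altogether $|A'|^2\ll|A^K|^{4/3}+|A'|$, from which \eqref{eq:terka} follows once $|A'|$ exceeds an absolute constant. The main obstacle is the handling of the exceptional locus $B_g$: the direct computation above is the cleanest route in this setting, but the argument can be made robust (and thus generalizable to higher-dimensional $V$) by noting that irreducibility of $V_t\times V_t$ plus dominance of $\phi_g$ force $\dim B_g\leq 1$ via upper semi-continuity of fiber dimension, after which applying Proposition~\ref{prop:cheyen} to $B_g$ itself bounds the bad contribution by $\ll |A^{k'}|^{1/3}\cdot|A'|$, preserving the $2/3$ exponent.
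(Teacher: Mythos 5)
Your argument is correct and is essentially the paper's own: the paper fibers $V_t\times V_t$ over $\SL_2$ via $(y_1,y_2)\mapsto y_1y_2^{-1}$ (i.e., your $\phi_g$ with $g=e\in A$, which makes the escape/Lemma~\ref{lem:tush} step superfluous), bounds each fiber $Z_{t,h}$ for $h\ne\pm e$ by Proposition~\ref{prop:cheyen}, and disposes of the two degenerate values of $h$ trivially, exactly as you do. The only shaky point is your closing aside: upper semicontinuity of fiber dimension by itself only gives $\dim B_g\le 2$ (a proper closed subset of $\SL_2$), not $\le 1$, so the ``robust'' variant would need an extra observation (e.g., that $B_g$ is a coset of the proper closed subgroup $\{u:uV_t=V_t\}$, or a dimension count in the source), but since your direct computation shows $B_g\subseteq\{\pm g\}$, the main proof is complete as written.
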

  Needless to say,
  $\dim(\SL_2) = 3$ and $\dim(V_t) = 2$, so this is a special case of
  (\ref{eq:utur}).

  \begin{proof}
    Consider the map $\phi:V_t(K)\times V_t(K) \to
    \SL_2(K) $ given by
    \[\phi(y_1,y_2) = y_1 y_2^{-1}.\]
    It is clear that \[\phi(A\cap V_t(K),A\cap V_t(K))\subset A^2.\]
    Thus, if $\phi$ were injective, we would obtain immediately that
    $|A\cap V_t(K)|^2\leq |A^2|$. Now, $\phi$ is not injective, not even nearly
    so. The preimage of $\{h\}$, $h\in \SL_2(K)$, is
    \[\phi^{-1}(\{h\}) = \{(w,h^{-1} w): \tr(w) = t, \tr(h^{-1} w) = t\}.\]

    We should thus ask ourselves how many elements of $A$ lie on the
    subvariety
        $Z_{t,h}$ of $G$ defined by
    \[Z_{t,h} =  \{(w,h w): \tr(w) = t, \tr(h^{-1} w) = t\}.\]
    For $h\ne \pm e$, $\dim(Z_{t,h}) = 1$, and the number
    and degrees of irreducible components of $Z_{t,h}$
    are bounded by an absolute constant. Thus, applying Proposition
        \ref{prop:cheyen}, we get that, for $h\ne \pm e$,
    \[|A\cap Z_{t,h}(K)|\ll |A^{k'}|^{1/3},\]
    where $k'$ and the implied constant are absolute.

    Now, for every $y_1\in V_t(K)$, there are at least $|V_t(K)|-2$
    elements $y_2\in V_t(K)$ such that $y_1 y_2^{-1} \ne \pm e$.
    We conclude that
    \[|A\cap V(K)| (|A\cap V(K)|-2) \leq |A^2|\cdot \max_{g\ne \pm e}
    |A\cap Z_{t,h}(K)| \ll |A^2| |A^{k'}|^{1/3}.\]
    We can assume that $|A\cap V(K)|\geq 3$, as otherwise the desired
    conclusion is trivial. We obtain, then, that
    \[|A\cap V(K)|\ll |A^k|^{2/3}\]
    for $k = \max(2,k')$, as we wanted.
      \end{proof}
  

  Now we can finally prove the result we needed.
  \begin{corollary}\label{cor:hutz}
    Let $G=\SL_2$, $K$ a finite field. Let $A$ be a set of generators of
    $G(K)$ with $A=A^{-1}$, $e\in A$.
    Let $g\in A^\ell$ ($\ell\geq 1$) be regular semisimple. Then
    \begin{equation}\label{eq:hop1}
      |A^2\cap C(g)|\gg \frac{A}{|A^{k \ell}|^{2/3}},\end{equation}
    where $k$ and the implied constant are absolute.
    
    In particular, if $|A^3|\leq |A|^{1+\delta}$, then
    \begin{equation}\label{eq:hop2}
      |A^2 \cap C(g)|\gg_\ell |A|^{1/3-O(\ell \delta)}.\end{equation}
  \end{corollary}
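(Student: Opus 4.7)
The plan is to combine Lemma \ref{lem:lawve}, which converts an upper bound on the intersection of a power of $A$ with a conjugacy class into a lower bound on the intersection of $A^2$ with a centralizer, together with the dimensional estimate of Proposition \ref{prop:juru}, which bounds the intersection with the trace-level variety $V_t$ containing $\Cl(g)$.

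First, since $g\in\SL_2(K)$ is regular semisimple, it has two distinct eigenvalues, and therefore $\tr(g)\neq \pm 2$ (otherwise the characteristic polynomial $x^2-\tr(g)x+1$ would have a repeated root). Consequently $\Cl(g)$ is contained in the variety $V_{\tr(g)}$ of (\ref{eq:schlingue}), and Proposition \ref{prop:juru} applies to intersections with $V_{\tr(g)}(K)$.

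Next, apply Lemma \ref{lem:lawve} with $l=\ell$: since $g\in A^\ell$ and $A=A^{-1}$,
\[|A^2\cap C(g)|\geq \frac{|A|}{|A^{\ell+2}\cap \Cl(g)|}\geq \frac{|A|}{|A^{\ell+2}\cap V_{\tr(g)}(K)|}.\]
The set $A^{\ell+2}$ inherits from $A$ the properties $(A^{\ell+2})^{-1}=A^{\ell+2}$, $e\in A^{\ell+2}$, and $\langle A^{\ell+2}\rangle=G(K)$, so Proposition \ref{prop:juru} (applied with $A^{\ell+2}$ in place of $A$) yields
\[|A^{\ell+2}\cap V_{\tr(g)}(K)|\ll \left|(A^{\ell+2})^{k_0}\right|^{2/3}=|A^{k_0(\ell+2)}|^{2/3}\leq |A^{3k_0\ell}|^{2/3}\]
for some absolute $k_0$, where at the last step we used $\ell+2\leq 3\ell$ (valid for $\ell\geq 1$). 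Setting $k=3k_0$ gives (\ref{eq:hop1}).

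For (\ref{eq:hop2}), assume $|A^3|\leq |A|^{1+\delta}$. Since $A=A^{-1}$, inequality (\ref{eq:jotor}) from Exercise \ref{exer:triplema} yields
\[|A^{k\ell}|\leq |A|\cdot\left(\frac{|A^3|}{|A|}\right)^{k\ell-2}\leq |A|^{1+\delta(k\ell-2)}=|A|^{1+O(\ell\delta)},\]
so $|A^{k\ell}|^{2/3}\leq |A|^{2/3+O(\ell\delta)}$. Plugging into (\ref{eq:hop1}) gives
\[|A^2\cap C(g)|\gg_\ell \frac{|A|}{|A|^{2/3+O(\ell\delta)}}=|A|^{1/3-O(\ell\delta)}.\]

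The step requiring the most care is really just bookkeeping: checking that regular semisimplicity of $g$ over $K$ implies $\tr(g)\neq\pm 2$ and $\Cl(g)\subset V_{\tr(g)}$, and making sure the exponent in Proposition \ref{prop:juru} can be absorbed into a single absolute constant $k$ after accounting for the $+2$ in Lemma \ref{lem:lawve}. All of the substantive content—both the dimensional estimate and the orbit-stabilizer lemma for sets—has already been established.
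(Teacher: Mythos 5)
Your proposal is correct and follows essentially the same route as the paper: Lemma \ref{lem:lawve} combined with Proposition \ref{prop:juru} (applied to $A^{\ell+2}$, noting $\Cl(g)\subset V_{\tr(g)}$ with $\tr(g)\neq\pm 2$) gives (\ref{eq:hop1}), and the Ruzsa-type inequality handles (\ref{eq:hop2}). Your explicit bookkeeping of the hypotheses for $A^{\ell+2}$ and the absorption of constants is exactly the "immediate" verification the paper leaves to the reader.
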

  \begin{proof}
    Proposition \ref{prop:juru} and Lemma \ref{lem:lawve}
    imply (\ref{eq:hop1}) immediately, and
    (\ref{eq:hop2}) follows readily from
    (\ref{eq:hop1}) via (\ref{eq:marmundo}).
    \end{proof}

  Let us now see two problems whose statements we will not use; they
  are, however, essential if one wishes to work in $\SL_n$ for $n$
  arbitrary, or in an arbitrary simple algebraic group.
  The first problem is challenging, but we have already seen and applied
  the main ideas involved in its solution. In essence, it is a matter
  of setting up a recursion properly.
  
  \begin{prob}\label{exer:corodo}  
    Generalize Proposition \ref{prop:cheyen} to pure-dimensional
    varieties $Z$ of arbitrary dimension; that is, prove Theorem \ref{thm:lp}.
  \end{prob}


  The following exercise is easy. In part (\ref{it:parobo}), follow the proof of Corollary \ref{cor:hutz}, using Exercise \ref{exer:corodo}.
  \begin{prob}\label{exer:lowbou}
    Let $G$ be a simple algebraic group over a finite field $K$.
    Let $A\subset G(K)$, $A=A^{-1}$, $e\in A$, $\langle A\rangle = G(K)$.
    Let $g\in A^\ell$, $\ell\geq 1$.
    \begin{enumerate}
    \item  Using the material in \S
  \ref{subsub:morph}, show that $\dim G - \dim \overline{\Cl(g)} = \dim C(g)$.
    \item\label{it:parobo}
      Show that, if $|A|^3\leq |A|^{1+\delta}$,
        \begin{equation}\label{eq:hosop2}
      |A^2 \cap C(g)|\gg |A|^{\frac{\dim(C(g))}{\dim(G)}-O(\ell \delta)},\end{equation}
        where the implied constants depend only on $n$.
        \end{enumerate}
  \end{prob}
  If $g$ is regular semisimple, then, as we know, $C(g)$ is a maximal torus.
  \section{Growth and diameter in $\SL_2(K)$}\label{sec:growthth}
\subsection{Growth in $\SL_2(K)$, $K$ arbitrary}
We come to the proof of our main result. Here we will be closer
to newer treatments (in particular, \cite{MR3402696})
than to what was the first proof, given in \cite{Hel08};
these newer versions generalize more easily. We will
give the proof only for $\SL_2$, and point out the couple of places in the proof
where one would has to be especially careful when generalizing matters to
$\SL_n$, $n>2$, or other linear algebraic groups.

The proof in \cite{Hel08} used the sum-product theorem (Thm.~\ref{thm:orb}).
We will not use it, but the idea of ``pivoting'' will reappear. It is
also good to note that, just as before, there is an inductive process here,
carried out on a group $G$, even though $G$ does not have a natural order
($1,2,3,\dotsc$).
All we need for the induction to work is a set of generators
$A$ of $G$.

 \begin{theorem}[Helfgott \cite{Hel08}]\label{thm:main08}
   Let $K$ be a finite field. Let $A\subset \SL_2(K)$ be a set of
   generators of
   $\SL_2(K)$ with $A=A^{-1}$, $e\in A$. There either
   \begin{equation}\label{eq:filisteo}|A^3|\geq |A|^{1+\delta},\end{equation}
   where $\delta>0$ is an absolute constant, or
   \begin{equation}\label{eq:asmoneo}A^3 = \SL_2(K).\end{equation}
 \end{theorem}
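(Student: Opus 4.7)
The plan is a proof by contradiction: suppose $|A^3|<|A|^{1+\delta}$ and $A^3\ne G:=\SL_2(K)$, with $\delta>0$ an absolute constant to be chosen small. By the Ruzsa-type inequalities of Exercise~\ref{exer:triplema}, $|A^k|\leq|A|^{1+C_k\delta}$ for any fixed $k$, so all bounded powers of $A$ stay close to $|A|$ in size. The aim is to exploit the tension between this upper bound and the lower bounds on torus intersections that Corollary~\ref{cor:hutz} produces, eventually forcing some bounded power of $A$ to have cardinality exceeding $|A|^{1+\delta}$.

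The first step is to produce geometric structure inside $A^{O(1)}$. Applying escape from subvarieties (Prop.~\ref{prop:huru}) to the proper subvariety $\{g\in G:\tr(g)^2=4\}$, I would obtain a regular semisimple element $g\in A^{k_0}$ whose centralizer $T=C(g)$ is a maximal torus. Corollary~\ref{cor:hutz} then gives the lower bound $|A^2\cap T|\gg|A|^{1/3-O(\delta)}$, and Proposition~\ref{prop:cheyen} applied to $T$ (which has dimension $1$, together with $|\SL_2(K)|\gg|K|^3$) gives the matching upper bound $|A^k\cap T|\ll|A|^{1/3+O_k(\delta)}$ for any constant $k$. Thus $A^{O(1)}\cap T$ is pinned near $|A|^{1/3}$. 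A second application of escape, now to the proper subvariety $N(T)\subsetneq G$, produces an element $h\in A^{k_1}$ with $hTh^{-1}\ne T$; since any two distinct maximal tori of $\SL_2$ intersect only in the center $\{\pm I\}$, this gives $T\cap hTh^{-1}\subset\{\pm I\}$.

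The next step is pivoting. Writing $S:=A^2\cap T$, the map $(s_1,s_2)\mapsto s_1 h s_2$ from $S\times S$ to $G$ has fibers of size at most $4$: a collision $s_1 h s_2=s_1' h s_2'$ forces $(s_1')^{-1}s_1=h(s_2'(s_2)^{-1})h^{-1}\in T\cap hTh^{-1}\subset\{\pm I\}$. Hence $|S h S|\geq|S|^2/4\gg|A|^{2/3-O(\delta)}$, and $S h S\subset A^{O(1)}$. This alone is not a contradiction, since $|A|^{2/3}$ is smaller than $|A|$; the genuine difficulty is to push this partial growth past the exponent $1$.

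That final step is what I expect to be the main obstacle. Following the pivoting idea of Prop.~\ref{prop:jutor}, I would bring the whole of $A$ (not just $S$) into the product, working with expressions of the form $A\cdot h_1\cdot A\cdot h_2\cdot A$ for $h_1,h_2\in A^{O(1)}$ chosen by escape so that $T$, $h_1Th_1^{-1}$ and $h_2Th_2^{-1}$ are in sufficiently generic position. The plan is to show that any collision in such a product can be ``unfolded'' as in the affine-group argument, pinning the contributing elements of $A$ to low-dimensional subvarieties of $G$; Theorem~\ref{thm:lp} would then bound the number of such collisions by $|A|^{(\dim V)/\dim(G)+O(\delta)}$, strictly smaller than $|A|$. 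The output should be a dichotomy: either the product has cardinality at least $|A|^{1+\delta'}$ for some absolute $\delta'>0$---a contradiction once $\delta<\delta'$---or else $A$ is concentrated in a proper algebraic subgroup (a torus, a unipotent group or a Borel), a case ruled out by escape because $\langle A\rangle=G$. The subtle part is making the unfolding/accounting tight enough to win over the loss $|A|^{O(\delta)}$ inherited from Ruzsa, and it is precisely here that the arguments of \cite{HeSL3}, \cite{BGT} and \cite{MR3402696} depart from the simpler affine-group model.
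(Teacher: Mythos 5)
Your setup (escape to a regular semisimple $g$, the lower bound $|A^2\cap T|\gg|A|^{1/3-O(\delta)}$ from Corollary~\ref{cor:hutz}, the matching upper bound) is the right starting point, but the engine of the proof is missing. Your pivoting step multiplies $S=A^2\cap T$ by itself and therefore tops out at $|S|^2\approx|A|^{2/3}$, and the paragraph meant to push past exponent $1$ is only a plan, with the hard part explicitly deferred. The device that actually closes the argument is different and simpler than what you sketch: for a ``pivot'' $\xi$ one uses the map $\phi_\xi(a,t)=a\,\xi t\xi^{-1}$ of (\ref{eq:naksym}) on $A\times(A^2\cap\mathbf{T})$, i.e.\ one multiplies \emph{all of $A$} (size $|A|$) by the torus slice (size $\gg|A|^{1/3-O(\delta)}$), so injectivity gives $|A^5|\gg|A|^{4/3-O(\delta)}$ at once, contradicting $|A^3|<|A|^{1+\delta}$ via (\ref{eq:jotor}). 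The existence of a usable pivot is then handled by the trichotomy: a pivot already in $A$; no pivots at all in $G(K)$; or both pivots and non-pivots, in which case $\langle A\rangle=G(K)$ produces a non-pivot $\xi$ and $a\in A$ with $a\xi$ a pivot, and the non-pivotness of $\xi$ itself supplies the elements of $A^2\cap\xi\mathbf{T}\xi^{-1}$ that one feeds into $\phi_{a\xi}$. None of this appears in your proposal, and your substitute (unfolding collisions in $Ah_1Ah_2A$ and invoking Theorem~\ref{thm:lp}) is precisely the content you would still have to supply.

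There is a second, independent gap: your final dichotomy ``either the product grows or $A$ is concentrated in a proper algebraic subgroup, ruled out by escape'' cannot be correct as stated, because a set with $|A|\geq|G|^{1-O(\delta)}$ generates $G$, is contained in no proper subvariety, and still satisfies $|A^3|<|A|^{1+\delta}$; escape says nothing about it. This is exactly the case that arises when there are no pivots anywhere: then $A^2$ meets every conjugate $\xi\mathbf{T}\xi^{-1}$ nontrivially, each such torus contributes $\gg|A|^{1/3-O(\delta)}$ elements, distinct tori meet only in $\{\pm e\}$, and summing over $\gg|G|/|\mathbf{T}|$ tori forces either growth or $|A|\geq|G|^{1-O(\delta)}$. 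In the latter case one must prove the second alternative $A^3=G$ directly, and that requires the large-subset result, Proposition~\ref{prop:diplo} (Nikolov--Pyber/Gowers, resting on Frobenius' bound in Proposition~\ref{prop:froby}), which your argument never invokes. So you need both the pivot trichotomy with the $A\times(A^2\cap\mathbf{T})$ injection and the quasirandomness step for large $A$; without them the proposal does not yield the theorem.
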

 Actually, \cite{Hel08} proved this result (with $A^k$, $k$ a constant,
 instead of $A^3$ in (\ref{eq:asmoneo})) for $K = \mathbb{F}_p$; the first
 generalization to a general finite field $K$ was given by \cite{MR2788087}.
 The proof we are about to see works for $K$ general without
 any extra effort. It works, incidentally, for $K$ infinite as well,
 dropping the condition $|A|<|\SL_2(K)|^{1-\epsilon}$, which becomes trivially
 true.
 The case of characteristic $0$ is actually easier than the case
 $K = \mathbb{F}_p$;
 the proof in \cite{Hel08} was already valid for $K = \mathbb{R}$ or
 $K = \mathbb{C}$, say. However, for applications,
 the ``right'' result for $K=\mathbb{R}$
 or $K=\mathbb{C}$ is not really Thm.~\ref{thm:main08}, but a statement
 counting how many elements there can be in $A$ and $A\cdot A\cdot A$ that
 are separated by a given small distance from each other; that was proven
 in \cite{BGSU2}, adapting the techniques in \cite{Hel08}. 
 \begin{proof}
   We may assume that $|A|$ is larger than an absolute constant, since
   otherwise the conclusion would be trivial. Let $G = \SL_2$.
   
   Suppose that $|A^3|<|A|^{1+\delta}$, where $\delta>0$ is a small constant
   to be determined later. By escape (Prop.~\ref{prop:huru}),
   there is an element $g_0\in A^c$ that is regular semisimple
   (that is, $\tr(g_0)\ne \pm 2$), where $c$ is an absolute constant.
   (Easy exercise: show we can take $c=2$.)
   Its centralizer in $G(K)$
   is $\mathbf{T} := C(g) = T(\overline{K})\cap G(K)$ for some maximal torus $T$.

   Call $\xi\in G(K)$ a {\em pivot}
   if the map $\phi_\xi:A\times \mathbf{T}
   \to G(K)$ defined by
   \begin{equation}\label{eq:naksym}
     (a,t) \mapsto a \xi t \xi^{-1}
   \end{equation} is injective as a function from $(\pm e\cdot
   A)/\{\pm e\} \times \mathbf{T}/\{\pm e\}$
   to $G(K)/\{\pm e\}$.

   {\em Case (a): There is a pivot $\xi$ in $A$.} By Corollary
   \ref{cor:hutz}, there are $\gg |A|^{1/3 - O (c\delta)}$ elements of $\mathbf{T}$
   in $A^{-1} A$. Hence, by the injectivity of $\phi_\xi$,
   \[\left|\phi_\xi(A,A^2\cap \mathbf{T})\right|
   \geq \frac{1}{4} |A| |A^2\cap \mathbf{T}|
   \gg |A|^{\frac{4}{3} - O(c \delta)}.\]
   At the same time, $\phi_\xi(A,A^2\cap \mathbf{T})\subset A^5$, and thus
   \[|A^5|\gg |A|^{4/3 - O(c \delta)}.\]
   For $|A|$ larger than a constant and
   $\delta>0$ less than a constant,
   this inequality gives us a contradiction with
    $|A^3| < |A|^{1+\delta}$ (by Ruzsa
   (\ref{eq:jotor})).

   {\em Case (b): There are no pivots $\xi$ in $G(K)$.} Then, for every
   $\xi\in G(K)$, there are $a_1,a_2\in A$, $t_1,t_2\in \mathbf{T}$,
   $(a_1,t_1) \ne (\pm a_2, \pm t_2)$ such that $a_1 \xi t_1 \xi^{-1} =
   \pm e \cdot a_2 \xi t_2 \xi^{-1}$, and that gives us that
   \[a_2^{-1} a_1 = \pm e\cdot \xi t_2 t_1^{-1} \xi^{-1}.\]
   In other words, for each
   $\xi\in G(K)$, $A^2$ has a non-trivial intersection with the torus
   $\xi T \xi^{-1}$:
   \begin{equation}\label{eq:nortsch}
     A^2 \cap \xi \mathbf{T} \xi^{-1} \not\subset \{\pm e\}.
   \end{equation}
   (Note this means that case (b) never arises for $K$ infinite. Why?)

   Choose any $g\in A^2 \cap \xi \mathbf{T} \xi^{-1}$ with $g\ne \pm e$. Then
   $g$ is regular semisimple. (This fact is peculiar to $\SL_2$, or rather
   to groups of rank $1$. This is
   one place in the proof that requires some work when you generalize it to
   other groups.)

   The centralizer 
   $C(g)$ of $g$ equals $\xi \mathbf{T} \xi^{-1}$ (why?). Hence, by
   Corollary \ref{cor:hutz}, we obtain that there are
   $\geq c' |A|^{1/3 - O(\delta)}$ elements of $\xi \mathbf{T} \xi^{-1}$
   in $A^2$,
   where $c'$ and the implied constant are absolute.

   At least $(1/2) |G(K)|/|\mathbf{T}|$ maximal tori of $G$ are of the form
   $\xi T \xi^{-1}$, $\xi \in G(K)$ (check this yourself!).
   Every semisimple element of $G$ that is not $\pm e$ is regular
   (again, something peculiar to $\SL_2$); thus, every element of $G$
   that is not $\pm e$ can lie on at most one maximal torus. Hence
   \[|A^2|\geq \frac{1}{2} \frac{|G(K)|}{|\mathbf{T}|} (c |A|^{1/3 - O(\delta)} - 2)
   \gg |G(K)|^{2/3} |A|^{1/3 - O(\delta)}.\]

   Therefore, either $|A^2|> |A|^{1+2\delta}$ (say) or
   $|A|\geq |G|^{1-O(\delta)}$. In the first case,
      we have obtained a contradiction. In the second case, Proposition
\ref{prop:diplo} implies that $A^3 = G$.   

{\em Case (c): There are pivots and non-pivots in $G(K)$.}
Since $\langle A\rangle = G(K)$, this implies that there exists a
non-pivot $\xi\in G$ and an
$a\in A$ such that $a\xi \in G$ is a pivot.
Since $\xi$ is not a pivot, (\ref{eq:nortsch}) holds, and thus there are
$|A|^{1/3-O(\delta)}$ elements of $\xi \mathbf{T} \xi^{-1}$ in $A^k$.

  At the same time, $a\xi$ is a pivot, i.e., the map
  $\phi_{a\xi}$ defined in (\ref{eq:naksym}) is injective
  (considered as an application from
$A/\{\pm e\} \times \mathbf{T}/\{\pm e\}$
   to  $G(K)/\{\pm e\}$).
  Therefore,
  \[\left|\phi_{a \xi}(A, \xi^{-1} (A^2 \cap \xi T \xi^{-1}) \xi)\right| \geq \frac{1}{4}
|A| |A^2 \cap \xi T \xi^{-1}| \geq \frac{1}{4} |A|^{\frac{4}{3} - O(\delta)}.\]
Since $\phi_{a \xi}(A, \xi^{-1} (A^2 \cap \xi T \xi^{-1}) \xi) \subset A^5$,
we obtain that \begin{equation}\label{eq:matameri}
|A^5|\geq \frac{1}{4} |A|^{4/3 - O(\delta)}.\end{equation}
Thanks again to Ruzsa (\ref{eq:jotor}), this inequality contradicts
$|A^3|\leq |A|^{1+\delta}$ for $\delta>0$ smaller than a constant.
 \end{proof}
 The following is a trivial exercise.
 \begin{prob}
   Using Theorem \ref{thm:main08}, show that the statement of
   Thm.~\ref{thm:main08} is also true with $\PSL_2$ in place of $\SL_2$.
   This step finishes the proof of Thm.~\ref{thm:main}.
 \end{prob}

 For $\SL_n$, $n>2$, or for general algebraic groups, there is, as we have
 seen, one difficulty in generalizing the above proof: a semisimple element
 other than $\pm e$ is not necessarily regular. The key to circumventing
 this difficulty is to use Theorem \ref{thm:lp} to bound the number
 of elements on non-maximal subtori of a maximal torus $T$, and, in that way,
 bound the number of non-semisimple elements of $A^k$ on $T$.

 \begin{prob}
   Using this observation, modify the proof of Thm.~\ref{thm:main08}
   so as to work for any simple linear algebraic group $G$.
 \end{prob}



There remains the question of what the optimal value of $\delta$ in 
Thm.~\ref{thm:main08} could be.
Kowalski \cite{MR3144176}
proves Thm.~\ref{thm:main08} with $\delta=1/3024$
(under the assumption $A=A^{-1}$). Button and Roney-Dougal prove
(under the same assumption) that one cannot do better than
$\delta=(\log_2 7 - 1)/6 \approx 0.3012$ \cite{buttonroneydougal}.

To obtain a good value of $\delta$, it seems best to aim for a statement
with a conclusion of the form
\[|A^3|\geq c |A|^{1+\delta}\]
instead of (\ref{eq:filisteo}). It may be even better to aim for a result
of the form, say,
\[|A A_0^k A A_0^k A A_0^k A|\geq c |A|^{1+\delta},\]
where $A_0$ is an arbitrary set of generators of $\SL_2(K)$. Then, when
using our result to prove a diameter bound (as in exercise \ref{prob:agant}),
we can set $A_0$ to be our initial set of generators $S$, whereas we set
$A$ equal to increasing powers of $S$. The resulting constant $C$ in the
exponent of the bound $\diam \Gamma(G,S)\ll (\log |G|)^C$ should then
improve substantially over the value $C=3323$ given in \cite{MR3144176}.

 Of course, we still need to prove Prop.~\ref{prop:diplo}.
 Let us do so.
  \subsection{The case of large subsets}\label{sec:subgra}
  Let us first see how $A$ grows when
  $A\subset \SL_2(\mathbb{F}_q)$ is large with respect to
  $G=\SL_2(\mathbb{F}_q)$. In fact, it is not terribly hard to show that, if
  $|A| \geq |G|^{1-\delta}$, $\delta>0$ a small constant,
  then
  $(A \cup A^{-1} \cup \{e\})^k = G$, where $k$ is an absolute constant.
  To proceed as in \cite{Hel08}:
  we can use (\ref{eq:vento}) to pass to the solvable group of upper-
  or lower-triangular matrices, then go on as in \S \ref{subs:affi}
  to show that the subgroups $U^{\pm}$ of upper- or lower-triangular
  matrices are contained in $(A\cup A^{-1} \cup \{e\})^{k'}$, $k'$ a constant;
  we are then done by $G=U^- U^+ U^- U^+$.
  
  We will prove a stronger and nicer result: $A^3=G$. The proof is due to
  Nikolov and Pyber \cite{MR2800484}; it is based on a classical idea,
  brought to bear to this particular context by Gowers \cite{MR2410393}.
  It will give us the opportunity to revisit the adjacency operator
  $\mathscr{A}$ and its spectrum.

  Recall that a {\em complex representation} of a group $G$ is just a
  homomorphism $\phi:G\to \GL_d(\mathbb{C})$; by the {\em dimension}
  of the representation we just mean $d$. A representation
  $\phi$ is {\em trivial} if $\phi(g)=e$ for every $g\in G$.

   The following result is due to Frobenius (1896), at least for $q$
   prime. It can be proven simply by examining a character table, as in
   \cite{MR1691549}.
   The same procedure gives analogues of the same result for other
   groups of Lie type. Alternatively, there is a very nice elementary proof for
   $q$ prime, to be found, for example, in \cite[Lemma 1.3.3]{MR3309986}.
   \begin{prop}\label{prop:froby}
     Let $G = \SL_2(\mathbb{F}_q)$, $q = p^\alpha$.
     Then every non-trivial complex representation of $G$ has dimension
      $\geq (q-1)/2$.
   \end{prop}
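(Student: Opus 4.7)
The plan is to restrict $\phi$ to the unipotent subgroup $U = \{\left(\begin{smallmatrix}1 & a\\0 & 1\end{smallmatrix}\right) : a\in \mathbb{F}_q\}$, which is isomorphic to the additive group of $\mathbb{F}_q$, and then use the conjugation action of the diagonal torus $T$ to force many distinct characters of $U$ to appear in $\phi|_U$. Fix a non-trivial character $\psi_0$ of $(\mathbb{F}_q,+)$ and parametrize the characters of $U$ by $\psi_b\left(\left(\begin{smallmatrix}1 & a\\0 & 1\end{smallmatrix}\right)\right) = \psi_0(ab)$ for $b\in \mathbb{F}_q$. Since $U$ is abelian, $\phi|_U$ decomposes as $\bigoplus_b m_b \psi_b$, where $m_b\in \mathbb{Z}_{\geq 0}$ and $d = \sum_b m_b$.

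First I would show that $\phi|_U$ is non-trivial, i.e., that $m_b>0$ for some $b\ne 0$. Suppose otherwise, so that $U\subset \ker(\phi)$. Conjugating $U$ by the Weyl element $w = \left(\begin{smallmatrix}0 & 1\\-1 & 0\end{smallmatrix}\right)\in G$ yields the lower unipotent subgroup $U^-$, which must therefore also lie in $\ker(\phi)$. Since $G = \SL_2(\mathbb{F}_q)$ is generated by $U\cup U^-$ (via the Bruhat/row-reduction decomposition), this forces $\ker(\phi) = G$, contradicting the non-triviality of $\phi$. Hence some $\psi_b$ with $b\ne 0$ occurs with positive multiplicity in $\phi|_U$.

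Next I would exploit the $T$-action. For $t = \left(\begin{smallmatrix}r & 0\\0 & r^{-1}\end{smallmatrix}\right)\in T$, one checks directly that $t\left(\begin{smallmatrix}1 & a\\0 & 1\end{smallmatrix}\right)t^{-1} = \left(\begin{smallmatrix}1 & r^2 a\\0 & 1\end{smallmatrix}\right)$, so $t$ acts on the characters of $U$ by $\psi_b\mapsto \psi_{r^{-2} b}$. The intertwining identity $\phi(t)\phi(u)\phi(t)^{-1} = \phi(tut^{-1})$ shows that the operator $\phi(t)$ realizes an isomorphism between the $\psi_b$- and $\psi_{r^{-2}b}$-isotypic components of $\phi|_U$, and thus $m_b = m_{r^{-2}b}$ for every $r\in \mathbb{F}_q^*$. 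Applied to any $b\ne 0$ with $m_b>0$, this yields $m_{b'}\geq 1$ for every $b'$ in the $T$-orbit of $b$, which has size $|\mathbb{F}_q^*|/|\{r: r^2 = 1\}|$, equal to $(q-1)/2$ when $q$ is odd and $q-1$ when $q$ is even. In either case, summing multiplicities gives $d\geq (q-1)/2$.

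The argument is essentially routine once the setup is in place; the only point where one has to be a little careful is the non-triviality of $\phi|_U$, which is where the structure of $G$ (generation by $U$ and $wUw^{-1}$) enters in an essential way. One might worry about small $q$ (where $\PSL_2(\mathbb{F}_q)$ fails to be simple), but the normal-closure argument above is self-contained and does not rely on simplicity, so the bound $(q-1)/2$ is valid for every prime power $q$.
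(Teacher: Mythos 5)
Your proof is correct, and it is worth noting how it sits relative to the paper, which does not actually write out an argument: it refers the reader either to an inspection of the character table of $\SL_2(\mathbb{F}_q)$ or to an elementary proof stated for $q$ prime (Tao's Lemma 1.3.3 in \cite{MR3309986}). That elementary proof is the same Frobenius-style idea you use, but phrased for a single unipotent element $u$: its image $\phi(u)$ has $p$-th roots of unity as eigenvalues, and conjugation by the split torus permutes these eigenvalues through all quadratic-residue powers, forcing at least $(p-1)/2$ of them to occur. For $q=p$ this is literally your argument, since then $U\cong\mathbb{Z}/p\mathbb{Z}$ and the eigenvalues of $\phi(u)$ are exactly the characters of $U$ appearing in $\phi|_U$. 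Your version, working with the full character group of $U\cong(\mathbb{F}_q,+)$ and the orbit of a nontrivial character under multiplication by squares of $\mathbb{F}_q^*$, is the natural upgrade that treats all prime powers $q=p^\alpha$ uniformly, which is what the proposition as stated requires; the stabilizer computation giving orbit size $(q-1)/2$ for $q$ odd (and $q-1$ for $q$ even) is right, and your normal-closure argument for the nontriviality of $\phi|_U$ (using that $U$ and $U^-=wUw^{-1}$ generate $\SL_2(\mathbb{F}_q)$) is the correct way to avoid any appeal to simplicity, so small $q$ causes no trouble. What the character-table route buys instead is the full list of irreducible dimensions (and hence analogues for other groups of Lie type, as the paper remarks), at the cost of substantially more machinery; your argument is self-contained, elementary, and exactly sharp enough for the bound needed in Proposition \ref{prop:diplo}.
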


   We recall that the adjacency operator $\mathscr{A}$ on a Cayley
   graph $\Gamma(G,A)$ is the linear
   operator that takes a function $f:V\to \mathbb{C}$
   to the function $\mathscr{A} f:V\to \mathbb{C}$ given by
   \begin{equation}\label{eq:adnovdef}\mathscr{A} f(g)
     = \frac{1}{|A|} \sum_{a\in A} f(a g).\end{equation}
     Assume, as usual, that $A = A^{-1}$. Then
     $\mathscr{A}$ is symmetric and all its eigenvalues are real:
   \[ \dotsc \leq \nu_2 \leq \nu_1 \leq \nu_0 = 1.\]
   The largest eigenvalue $\nu_0$ corresponds to the eigenspace of
   constant functions.

   \begin{prob}
     Show that no eigenvalue $\nu$ can be larger than $1$. Hint:
     assume $\nu>1$, and show, using (\ref{eq:adnovdef}), that, for
     $g$ such that $|f(g)|$ is maximal, the equation 
     $\mathscr{A} f(g) = \nu f(g)$ leads to a contradiction.
   \end{prob}
   
   By an {\em eigenspace} of $\mathscr{A}$ we mean, of course, the vector space
     consisting of functions $f$ such that $\mathscr{A} f = \nu f$ for
     some fixed eigenvalue $\nu$. It is clear from the definition
     that every eigenspace of $\mathscr{A}$ is invariant under the action
     of $G$ by multiplication on the right. Hence, an eigenspace of
     $\mathscr{A}$ is a complex
     representation of $G$ -- and it can be trivial only if it is the
     eigenspace of constant functions, i.e., the
     eigenspace corresponding to $\nu_0$. 
     Thus, by Prop.~\ref{prop:froby},
     all other eigenvalues have multiplicity $\geq (q-1)/2$.

   The idea now is to obtain a spectral gap, i.e., a non-trivial upper bound on
   $\nu_j$, $j>0$. It is standard to use the fact that the trace of a power
   $\mathscr{A}^r$ of an adjacency operator $\mathscr{A}$ can be expressed
   in two ways: as a the number of cycles of length $r$ in the graph
   $\Gamma(G,A)$ (multiplied by $1/|A|^r$), and as the sum of the $r$th
   powers of the eigenvalues of $\mathscr{A}$. In our case, for $r=2$, this
   gives us
      \begin{equation}\label{eq:gotra}
     \frac{|G| |A|}{|A|^2} = \sum_j \nu_j^2 \geq \frac{q-1}{2}
   \nu_j^2,\end{equation}
   for any $j\geq 1$,  and hence
   \begin{equation}\label{eq:himult}
     |\nu_j|\leq \sqrt{\frac{|G|/|A|}{(q-1)/2}}.\end{equation}
   This is a very low upper bound when $|A|$ is large. 
   This means that a few applications of the operator $\mathscr{A}$
   are enough to render any function almost uniform, since any component
   orthogonal to the space of constant functions is multiplied by
   some $\nu_j$, $j\geq 1$, at every step. The following proof puts
   in practice this observation efficiently.
   
   \begin{prop}[\cite{MR2800484}]\label{prop:diplo}
     Let $G = \SL_2(\mathbb{F}_q)$, $q=p^\alpha$. Let $A\subset G$,
     $A=A^{-1}$.
     Assume $|A|\geq 2 |G|^{8/9}$. Then
\[A^3 = G.\]
   \end{prop}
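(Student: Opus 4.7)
The plan is to count, for each target $g\in G$, the number of representations $N(g) := |\{(a_1,a_2,a_3)\in A^3 : a_1 a_2 a_3 = g\}|$, and to show $N(g) > 0$ by exploiting the spectral gap (\ref{eq:himult}) that Prop.~\ref{prop:froby} has already furnished. With the convolution convention $(\phi*\psi)(x) = \sum_y \phi(y)\psi(y^{-1}x)$, we have $N(g) = (1_A * 1_A * 1_A)(g)$.

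Write $1_A = \alpha \mathbf{1} + f$, where $\alpha = |A|/|G|$, $\mathbf{1}$ is the constant function $1$ on $G$, and $f = 1_A - \alpha \mathbf{1}$ has mean zero. Since $\mathbf{1} * h = h * \mathbf{1} = (\sum h)\,\mathbf{1}$ for every $h: G\to \mathbb{C}$, any factor of $\mathbf{1}$ in a convolution kills a neighbouring mean-zero factor. Expanding $(1_A)^{*3} = (\alpha \mathbf{1} + f)^{*3}$, every cross term containing both $\mathbf{1}$ and $f$ therefore vanishes (for the two-$f$, one-$\mathbf{1}$ terms one uses associativity together with $f*\mathbf{1} = \mathbf{1}*f = 0$, since $f*f$ itself has mean zero), leaving
\[
  N(g) \;=\; \alpha^3 (\mathbf{1}*\mathbf{1}*\mathbf{1})(g) + (f*f*f)(g)
  \;=\; \frac{|A|^3}{|G|} + (f*f*f)(g).
\]
It therefore suffices to show $|(f*f*f)(g)| < |A|^3/|G|$ for every $g\in G$.

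To estimate the error I would note that $f*f = 1_A * f$, because $\alpha(\mathbf{1}*f) = 0$. Since $A = A^{-1}$, convolution on the left with $1_A$ equals $|A|\,\mathscr{A}$, where $\mathscr{A}$ is the adjacency operator of $\Gamma(G,A)$, and since $f$ is orthogonal to the constants, the spectral gap (\ref{eq:himult}) yields
\[
  \|f*f\|_2 \;=\; |A|\,\|\mathscr{A} f\|_2
  \;\leq\; |A|\sqrt{\tfrac{|G|/|A|}{(q-1)/2}}\,\|f\|_2
  \;=\; \sqrt{\tfrac{2|G||A|}{q-1}}\,\|f\|_2.
\]
Using $\|f\|_2^2 = |A|(1 - |A|/|G|) \leq |A|$ and Cauchy--Schwarz in the form $|(f*f*f)(g)| = |\sum_x (f*f)(x)\,f(x^{-1}g)| \leq \|f*f\|_2\cdot\|f\|_2$ (the second factor, a left-translate of $f$ in the sense $x\mapsto f(x^{-1}g)$, has the same $L^2$ norm as $f$), I obtain
\[
  |(f*f*f)(g)| \;\leq\; |A|^{3/2}\sqrt{\tfrac{2|G|}{q-1}}.
\]

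It remains to verify $|A|^{3/2}\sqrt{2|G|/(q-1)} < |A|^3/|G|$, equivalently $|A|^3 > 2|G|^3/(q-1)$. Since $|G| = q(q^2-1) \leq q^3$ gives $|G|^{1/3} \leq q$, and the hypothesis $|A| \geq 2|G|^{8/9}$ gives $|A|^3 \geq 8|G|^{8/3}$, the required inequality reduces to $4(q-1) > |G|^{1/3}$, which holds for all $q\geq 2$ (and in any case, for very small $q$ the hypothesis $|A| \geq 2|G|^{8/9}$ already forces $A = G$, making the conclusion trivial). The main delicate point in writing this up is the complete vanishing of the cross terms -- in particular that the mixed two-$f$--one-$\mathbf{1}$ convolutions really do vanish -- together with setting up the final Cauchy--Schwarz so that the second factor has $L^2$ norm exactly $\|f\|_2$; the numerical check at the end is then comfortable.
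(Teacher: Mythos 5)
Your proof is correct, and it is essentially the paper's own argument (Nikolov--Pyber, via Gowers' quasirandomness) recast in convolution language: the decomposition $1_A=\alpha\mathbf{1}+f$ combined with the spectral bound (\ref{eq:himult}) and Cauchy--Schwarz produces exactly the same main term $|A|^3/|G|$ and error bound $|A|^{3/2}\sqrt{2|G|/(q-1)}$ that the paper extracts from $\langle \mathscr{A}1_A,1_{gA}\rangle$ expanded in an orthonormal eigenbasis. The only difference is presentational: you count triples $(a_1,a_2,a_3)$ directly and show $N(g)>0$, whereas the paper assumes $g\notin A^3$ and derives a contradiction from the vanishing of that inner product.
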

   Actually, \cite{MR2800484} proves this result without the assumption
   $A=A^{-1}$.
   We need $A=A^{-1}$ for
     $\mathscr{A}$ to be a symmetric operator,
   but, thanks to \cite{MR2410393}, essentially the same
     argument works in the case $A\ne A^{-1}$.
   \begin{proof}
          Suppose there is a $g\in G$ such that $g\notin A^3$.
     Then the scalar product
     \[\begin{aligned}\langle \mathscr{A} 1_A, 1_{g A}\rangle &=
     \langle \mathscr{A} 1_A, 1_{g A}\rangle =
     \sum_{x\in G} (\mathscr{A} 1_A)(x) \cdot 1_{g A}(x) \\ &=
     \frac{1}{|A|} \sum_{x\in G} \sum_{a\in G} 1_A(a x) \cdot 1_{g A}(x)
     \end{aligned}\]
     equals $0$, as otherwise there is an $x\in g A$ and
     an $a\in A$ such that $a x \in A$, and that would imply 
     $g\in A^{-1} A A^{-1} = A^3$.

     Since $\mathscr{A}$ is symmetric, it has full spectrum, that is,
     there exists a system of $n=|G|$ orthonormal eigenvectors
     $v_0, v_1,\dotsc$
     of $\mathscr{A}$.
     Here $v_0$ is the constant function satisfying
     $\langle v_0,v_0\rangle=1$, that is, the constant function taking
     the value $1/\sqrt{|G|}$ everywhere. Then
\[\begin{aligned}
\langle \mathscr{A} 1_A, 1_{g A}\rangle &=
\langle \sum_{j\geq 0} \nu_j \langle 1_A, v_j\rangle v_j, 1_{g A}\rangle
\\ &= \nu_0 \langle 1_A,v_0\rangle \langle v_0, 1_{g A}\rangle +
\sum_{j>0} \nu_j \langle 1_A, v_j\rangle \langle v_j, 1_{g A}\rangle
.\end{aligned}\] 
Now
\[\nu_0 \langle 1_A,v_0\rangle \langle v_0, 1_{g A}\rangle =
1\cdot \frac{|A|}{\sqrt{|G|}}\cdot \frac{|g A|}{\sqrt{|G|}} = 
\frac{|A|^2}{|G|}.\]
At the same time, by (\ref{eq:himult}) and
Cauchy-Schwarz,
\[\begin{aligned} \left|\sum_{j>0} \nu_j \langle 1_A, v_j\rangle \langle v_j, 1_{g A}\rangle\right|
&\leq
\sqrt{\frac{2 |G|/|A|}{q-1}}
\sqrt{\sum_{j\geq 1}  |\langle 1_A, v_j\rangle|^2}
\sqrt{\sum_{j\geq 1} |\langle v_j, 1_{g A}\rangle|^2}\\
&\leq 
\sqrt{\frac{2 |G|/|A|}{q-1}} |1_A|_2 |1_{g A}|_2
=  \sqrt{\frac{2|G| |A|}{q-1}} .
\end{aligned}\]
Since $|G|= q (q^2-1)$, we see that $|A|\geq 2 |G|^{8/9}$ implies
\[\frac{|A|^2}{|G|} > \sqrt{\frac{2 |G| |A|}{q-1}},\] and thus
$\langle \mathscr{A} 1_A, 1_{g A}\rangle > 0$. Contradiction.
   \end{proof}
   
 \section{Further perspectives and open problems}\label{sec:further}

\subsection{Expansion, random walks and the affine sieve}\label{sec:exprawsie}

Let $G$ be a group, $A\subset G$, $A = A^{-1}$.
As we saw in \S \ref{subs:whatisg}, the adjacency operator
$\mathscr{A}$ has full real spectrum, and we can define what it means for
the graph $\Gamma(G,A)$ to be a {\em $\delta$-spectral expander}, or
simply an {\em $\delta$-expander}.
An infinite family of graphs $\Gamma(G_i,A_i)$
is called an {\em expander family} if there is an $\epsilon>0$ such that
every $\Gamma(G_i,A_i)$ is an $\epsilon$-expander. Of particular interest are
expander families with $|A_i|$ bounded.

Using
Thm.~\ref{thm:main08},
Bourgain and Gamburd proved the following result
\cite{MR2415383}.
\begin{theorem}\label{thm:bg}
  Let $A_0\subset \SL(\mathbb{Z})$. Assume that $A_0$ is not
  contained in any proper algebraic subgroup of $\SL_2$. Then
  \begin{equation}\label{eq:hustu}
    \{\Gamma(\SL_2(\mathbb{Z}/p\mathbb{Z}), A_0 \mo p)\}_{p>C,
    \text{$p$ prime}}\end{equation}
  is an expander family for some constant $C$.
\end{theorem}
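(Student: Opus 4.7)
The plan is to execute the Bourgain--Gamburd three-step method: non-concentration at logarithmic scale, $\ell^2$-flattening via the product theorem Thm.~\ref{thm:main08}, and quasi-randomness via Prop.~\ref{prop:froby}. Let $A = A_0 \cup A_0^{-1}$, $G_p = \SL_2(\mathbb{F}_p)$, and let $\mu$ denote the uniform probability measure on the image of $A$ in $G_p$; write $\mu^{(k)}$ for the $k$-fold convolution. The target is $\|\mu^{(k)} - u_{G_p}\|_2 \ll |G_p|^{-c}$ for some $k = O(\log p)$ and an absolute $c > 0$, where $u_{G_p}$ is uniform on $G_p$. By standard considerations this is equivalent to the desired uniform spectral gap for the operators $\mathscr{A}$ on the family (\ref{eq:hustu}).

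First I would prove a non-concentration statement: there is $c_0 > 0$ such that, for every $k \leq c_0 \log p$, every proper closed subgroup $H < G_p$, and every $x \in G_p$, one has $\mu^{(k)}(xH) \leq p^{-\eta_0}$ for some $\eta_0 > 0$. The hypothesis that $A_0$ lies in no proper algebraic subgroup of $\SL_2$ means $\langle A_0 \rangle$ is Zariski-dense; escape from subvarieties (Prop.~\ref{prop:huru}) applied over $\mathbb{Z}$, combined with the height bound $\|w\| \leq e^{O(k)}$ for $w \in A_0^k$, then implies that no nontrivial word-identity of length $\ll \log p$ holds modulo $p$. Classifying proper subgroups of $\SL_2(\mathbb{F}_p)$ (tori, Borels, normalizers of tori, and a short exceptional list including $\SL_2(\mathbb{F}_{p'})$ for small $p'$) and combining with the dimensional estimates of Thm.~\ref{thm:lp} control $\mu^{(k)}(xH)$ uniformly.

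Second comes the $\ell^2$-flattening step. Suppose that $\|\mu^{(k)}\|_2^2 \geq |G_p|^{-1+\eta}$ for some $k \leq C_1 \log p$ and a small fixed $\eta > 0$. A non-commutative Balog--Szemer\'edi--Gowers argument extracts a symmetric set $A' \subset G_p$ with $|A'| \asymp \|\mu^{(k)}\|_2^{-2}$ and $|A'^3| \leq |A'|^{1+O(\eta)}$, while retaining a constant fraction of the $\ell^2$-mass of $\mu^{(k)}$ inside boundedly many translates of $A'$. Theorem~\ref{thm:main08} now forces a dichotomy: either $|A'^3| \geq |A'|^{1+\delta}$, contradicting the tripling bound for $\eta$ sufficiently small, or $A'^3 = G_p$, which forces $|A'| \geq |G_p|^{1-O(\eta)}$ and so $\|\mu^{(k)}\|_2^2 \leq |G_p|^{-1+O(\eta)}$, against the hypothesis. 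Propagating the non-concentration of Step~1 through the BSG extraction prevents $A'$ from being essentially contained in a proper subgroup, which is exactly what permits the application of Thm.~\ref{thm:main08}. Iterating this dichotomy $O(\log\log p)$ times yields $\|\mu^{(k_0)}\|_2^2 \leq |G_p|^{-1+o(1)}$ for some $k_0 = O(\log p)$.

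Finally, to convert flattening into a spectral gap, I would invoke the quasi-randomness bound Prop.~\ref{prop:froby}: every nontrivial complex representation of $\SL_2(\mathbb{F}_p)$ has dimension $\geq (p-1)/2$, hence every eigenspace of $\mathscr{A}$ other than the constants has multiplicity $\geq (p-1)/2$. The trace identity
\[
\sum_{j \geq 0} \nu_j^{2k_0} = |G_p| \cdot \|\mu^{(k_0)}\|_2^2 \leq |G_p|^{o(1)}
\]
together with this multiplicity lower bound yields $|\nu_1|^{2k_0} \leq 2|G_p|^{o(1)}/(p-1)$, and with $k_0 \asymp \log p$ this gives $|\nu_1| \leq 1 - \epsilon'$ for an absolute $\epsilon' > 0$, which is the sought spectral gap. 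The main obstacle is Step~2, the $\ell^2$-flattening lemma. Two difficulties converge there: running BSG noncommutatively requires the careful use of Ruzsa-type inequalities adapted via exercise~\ref{exer:triplema}, and ensuring that the extracted set $A'$ is not nearly trapped inside a proper subgroup demands that the non-concentration input of Step~1 be transported through the BSG machinery, using the dimensional estimates of \S\ref{chap:inter} to handle thickened subvarieties that might otherwise swallow most of the $\ell^2$-mass.
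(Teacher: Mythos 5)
The paper does not actually prove Theorem~\ref{thm:bg}: it cites Bourgain--Gamburd \cite{MR2415383} and lists the ingredients (Thm.~\ref{thm:main08}, the noncommutative Balog--Szemer\'edi--Gowers theorem \cite{MR2501249}, and Prop.~\ref{prop:froby} applied as in \cite{SarnakXue}). Your three-step outline --- non-concentration at scale $\log p$, $\ell^2$-flattening via BSG plus the product theorem, quasirandomness via the trace identity and the $(p-1)/2$ multiplicity bound --- is exactly that argument, and Steps 2 and 3 as you describe them are sound, including the point that non-concentration is what guarantees the extracted approximate group $A'$ is not trapped in a proper subgroup (a symmetric set not contained in any proper subgroup generates, so Thm.~\ref{thm:main08} applies).

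The soft spot is Step 1 as you justify it. The claim that ``no nontrivial word-identity of length $\ll \log p$ holds modulo $p$'' is only available when $\langle A_0\rangle$ is free on $A_0$: a Zariski-dense subset of $\SL_2(\mathbb{Z})$ may well satisfy relations, so the Cayley graphs mod $p$ need not have logarithmic girth. What the height bound $e^{O(k)}$ gives is that a word which is $\neq e$ in $\SL_2(\mathbb{Z})$ stays $\neq e$ mod $p$ once $p > e^{O(k)}$; to get the $\ell^\infty$ (and coset) non-concentration for general $A_0$ one must add an input such as non-amenability of $\Lambda = \langle A_0\rangle$ (it contains a nonabelian free subgroup, $\SL_2(\mathbb{Z})$ being virtually free), so that Kesten-type exponential decay of return probabilities on $\Lambda$ transfers mod $p$ at scale $k \asymp \log p$ --- or else reduce to a free subgroup generated by bounded-length words. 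Likewise, Thm.~\ref{thm:lp} is a statement about \emph{sets that generate} $G(K)$ and cannot be invoked as a black box to bound $\mu^{(k)}(xH)$; in \cite{MR2415383} the coset non-concentration for the Borel and dihedral subgroups is obtained from the structure of proper subgroups of $\SL_2(\mathbb{F}_p)$ via trace/commutation arguments for the random walk. Finally, a bookkeeping remark: with a constant-exponent bound $\mu^{(k_1)}(xH)\leq p^{-\eta_0}$ in hand, the flattening dichotomy needs only $O_{\delta,\eta}(1)$ iterations, not $O(\log\log p)$; if it genuinely took $\log\log p$ doublings of $k$ you would end with $k_0$ superlogarithmic and the final gap $1-\epsilon'$ would no longer be uniform in $p$, so the iteration count must be kept absolute for the conclusion you state.
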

The proof also involves Proposition \ref{prop:froby}
(applied as in \cite{SarnakXue})
as well as a non-commutative version \cite{MR2501249}
of the Balog-Gowers-Szemer\'edi theorem
from additive combinatorics.
There are by now wide-ranging
generalizations of Thm.~\ref{thm:bg}; see, e.g., \cite{SGV}.


A {\em random walk} on a graph is what it sounds like: we start at a vertex
$v_0$, and at every step we move to one of the $d$ neighbors of the vertex
we are at -- choosing any one of them with probability $1/d$. For convenience
we work with a {\em lazy random walk}: at every step, we decide to stay
where we are with probability $1/2$, and to move to a neighbor with probability
$1/2d$. The {\em mixing time} is the number of steps it takes for ending point
of a lazy random walk to become almost equidistributed (where ``almost''
is understood in any reasonable metric). In an $\epsilon$-expander graph
$\Gamma(G,A)$, the mixing time is $O_\epsilon(\log |G|)$, i.e., about as small
as it could be: it is easy to see that, for $|A|$ bounded, the mixing
time (and even the
diameter) has to be $\gg \log |G|$.

\begin{prob}
  Let $G$ be a group, $A\subset G$, $A=A^{-1}$, $\langle A\rangle = G$.
  Let $\mathscr{A}$ be the adjacency operator on the Cayley graph. 
  \begin{enumerate}
  \item Take a lazy random walk with $k$ steps on the Cayley graph,
    starting at the identity $e$.
    Show that the probability of your final position is given by the function
    $\phi_k = ((\mathscr{A}+I)/2)^k \delta_e$, where $\delta_e:G\to \mathbb{C}$
    is the function taking the value $1$ at $e$ and $0$ elsewhere.
  \item Write $\delta_e$ as a linear combination
    $\delta_e = \sum_j c_j v_j$, where each $v_j$ is an eigenvector of
    $\mathscr{A}$. What is the coefficient in front of the constant
    eigenvector $v_0$? What is $((\mathscr{A}+I)/2)^k \delta_e$,
    as a linear combination of the eigenvectors $v_j$?
  \item Assume $\Gamma(G,A)$ is a $\delta$-expander. Show that, for
    $k\geq (2 C/\delta) \log |G|$, $C\geq 1$,
    the probability distribution $\phi_k$ is
    nearly uniform in both the $\ell^2$- and the $\ell^\infty$-norms:
    \[\sum_{g\in G} \left| \phi_k(g) - \frac{1}{|G|}\right|^2 \leq \frac{1}{|G|^{C}},\]
    \[\max_{g\in G} \left| \phi_k(g) - \frac{1}{|G|}\right| \leq \frac{1}{|G|^{C-1}}.\]
    That is to say, the mixing time with respect to either
    the $\ell^2$- or the $\ell^\infty$-norms is $\ll (1/\delta) \log |G|$.
    \end{enumerate}
\end{prob}

Thus, Thm.~\ref{thm:bg} gives us small mixing times. This fact has made
the {\em affine sieve} possible \cite{MR2587341}. The affine sieve is an
analogue
of classical sieve methods; they are recast as sieves based on the natural action
of $\mathbb{Z}$ on $\mathbb{Z}$, whereas a general affine sieve considers 
the actions of other groups, such as $\SL_2(\mathbb{Z})$.

Expansion had been shown before for some specific $A_0$. In particular,
when $A_0$ generates $\SL_2(\mathbb{Z})$ (or a subgroup of finite index before)
then the fact that (\ref{eq:hustu}) is an expander graph can be derived
from the {\em Selberg spectral gap} \cite{MR0182610}, i.e., the fact
that the Laplacian on the quotient $\SL_2(\mathbb{Z})\backslash \mathbb{H}$
of the upper half plane $\mathbb{H}$ has a spectral gap. Nowadays, one
can go in the opposite direction: spectral gaps on more general quotients can
be proven using Thm.~\ref{thm:bg} \cite{MR2892611}.

Let us finish this discussion by saying that it is generally held to be
plausible that the family of {\em all} Cayley graphs of $\SL_2(\mathbb{Z}/p\mathbb{Z})$, for all $p$, is an expander family; in other words, there may
be an $\epsilon>0$ such that, for every prime $p$ and every
generator $A$ of $\SL_2(\mathbb{Z}/p\mathbb{Z})$, the graph
$\Gamma(\SL_2(\mathbb{Z}/p\mathbb{Z}),A)$ is an $\epsilon$-expander.
This statement has seemed plausible at least since \cite{MR1203870},
but proving it is an open problem believed to be very hard. It has
been shown that there exists a thin family of primes such that the
statement is true if those primes are omitted \cite{MR2746951}.

\subsection{Algorithmic and probabilistic questions}

It is one thing to show that the diameter of a group $G$ is small,
that is, to show that every element of $G$ can be written as short
word on any set of generators $A$. (By a {\em word} on $A$ we mean
a product of elements 
of $A \cup A^{-1}$.) It is quite another to be able to find that word
-- reasonably quickly, it is understood.

Larsen \cite{MR1976231} gave a probabilistic algorithm
that expresses an arbitrary $g\in \SL_2(\mathbb{Z}/p\mathbb{Z})$
as a word of length $O(\log p \log \log p)$ in the generators
\[A = \left\{\left(\begin{matrix} 1 & 1\\ 0 & 1\end{matrix}\right),
  \left(\begin{matrix} 1 & 0\\ 1 & 1\end{matrix}\right)
  \right\}\]
in time $(\log p)^{O(1)}$. No algorithm is known for arbitrary generators
of $\SL_2(\mathbb{Z}/p\mathbb{Z})$. Neither do we have an algorithm for
finding short words on
arbitrary generators of finite simple groups in any other family.



Another question is what happens when $g_1, g_2$ are random elements of a
group $G$. For several kinds of groups (linear algebraic, $\Alt(n)$)
it is known that, with probability tending to one, $g$ and $h$ generate
$G$. What is the diameter
of the Cayley graph of $G$ with respect to $\{g,h\}$ likely to be?
For $G=\SL_2(\mathbb{F}_p)$, it is known that it is $O(\log |G|)$
with probability tending to one (by \cite{MR2532876} taken together with
Thm.~\ref{thm:main08}). For $\Alt(n)$, it is known to be
$O(n^2 (\log n)^{O(1)})$ with probability tending to one \cite{MR3272386}.
Is it actually $O(n (\log n)^{O(1)})$, or even $O(n \log n)$, with probability
tending to one?

One can combine algorithmic and probabilistic questions. The proof in 
\cite{BBS04} (supplemented by \cite{BH}) yields a probabilistic algorithm that,
for a proportion $\to 1$ (as $n\to \infty$) of all pairs of elements
$g_1$, $g_2$ of $\Alt(n)$, expresses any given element $g$ of $\Alt(n)$
as a word of polynomial length on $g_1$ and $g_2$, and does so in (Las Vegas) polynomial time.
(If the algorithm will fail for a given pair $(g_1,g_2)$, it states so
at an initial stage taking polynomial time.) The procedure in
\cite{MR3272386} gives a probabilistic algorithm that finds a word
of length $O(n^2 (\log n)^{O(1)})$ in time $O(n^2 (\log n)^{O(1)})$ for
a proportion $\to 1$ of all pairs $g_1$, $g_2$ and $g$ arbitrary,
as is sketched in
\cite[App.~B]{MR3272386}.

No analogous algorithm is known over
$\SL_2(\mathbb{F}_q)$, or for any other simple group of Lie type; we
do not know how to express an arbitrary element of $\SL_2(\mathbb{F}_q)$
as a word of length $(\log q)^{O(1)}$ on a random pair of generators of $G$
in time $(\log q)^{O(1)}$. 


\subsection{Final remarks}
Let us briefly mention some links with other areas.

{\em Group classification.} It is by now clear that it is useful to look at a particular kind of result
in group classification: the kind that was developed so as to avoid casework,
and to do without the Classification of Finite Simple Groups. (The Classification is now generally accepted, but this was not
always the case, and it is still sometimes felt to be better to prove something
without it than with it; what we are about to see gives itself some
validation to this viewpoint.) While results proven without the Classification
are sometimes weaker than others, they are also more robust. Classifying
subgroups of a finite group $G$ is the same as classifying subsets $A\subset G$
such that $e\in A$ and $|A A| = |A|$. Some Classification-free classification methods can be
adapted to help in classifying subsets $A\subset G$ such that $e\in A$ and
$|A A A|\leq |A|^{1+\delta}$ -- in other words, precisely what we are studying.
It is in this way that \cite{LP} was useful in \cite{BGT},
and \cite{Bab82}, \cite{Pyb93} were useful in \cite{MR3152942}.

{\em Model theory.} Model theory
is essentially a branch of logic with applications
to algebraic structures.
Hrushovski and his collaborators \cite{MR1329903}, \cite{MR2436141},
\cite{MR2833482} have used model theory
to study subgroups of algebraic groups. This
was influenced by Larsen-Pink \cite{LP}, and also served to
explain
it. In turn, \cite{MR2833482} influenced later work, especially
\cite{BGTgen}.

{\em Permutation-group algorithms.} Much work on permutation groups has been
algorithmic in nature. Here a standard reference is \cite{MR1970241}.
A good example is a problem we mentioned before -- that
of bounding the diameter of $\Sym(n)$ with
respect to a random pair of generators; the approach in \cite{BBS04}
combines probabilistic and algorithmic ideas -- as does \cite{MR3272386}, which
builds on \cite{BBS04}, and as, for that matter, does \cite{MR3152942}. The
reference \cite{MR2466937} treats several of the relevant probabilistic tools.

{\em Geometric group theory.} Here much work remains to be done. Geometric
group theory, while still a relatively new field, is considerably older than
the approach followed in these notes. It is clear that there is a connection,
but it has not yet been fully explored. Here it is particularly worth
remarking that \cite{MR2833482} gave a new proof of Gromov's theorem by means
of the study of sets $A$ that grow slowly in the sense used in these notes.

 \bibliographystyle{alpha}
\bibliography{aws}
\end{document}